\definecolor{modra3}{rgb}{0.1,0,0.7}
\newtheorem{theorem}{Theorem}[section]
\newtheorem{corollary}[theorem]{Corollary}
\newtheorem{lemma}[theorem]{Lemma}
\newtheorem{proposition}[theorem]{Proposition}
\newtheorem{conjecture}[theorem]{Conjecture}
\newtheorem{problem}[theorem]{Problem}
\newtheorem{fact}[theorem]{Fact}
\newtheorem{observation}[theorem]{Observation}
\theoremstyle{definition}
\newtheorem{example}[theorem]{Example}
\newcommand{\bbZ}{\mathbb{Z}}
\newcommand{\bbR}{\mathbb{R}} 
\newcommand{\cC}{{\mathfrak C}}
\newcommand{\chain}[2]{\cC(#1,#2)}  
\newcommand{\cS}{{\mathcal S}}  
\newcommand{\cP}{{\mathcal P}} 
\newcommand{\cPP}{\mathcal{PP}} 
\newcommand{\sNE}{\mathcal{NE}} 
\newcommand{\NE}{\mathrm{NE}} 
\newcommand{\sE}{\mathcal{E}} 
\newcommand{\E}{\mathrm{E}} 
\newcommand{\sEl}{\sE_\lambda(*,\pi_n)} 
\newcommand{\sEf}{\sE_f(*,\pi_n)}
\newcommand{\Sf}{S_f}
\newcommand{\SEf}{\mathcal{SE}_f(*,\pi_n)} 
\newcommand{\bu}{\texttt{-}} 
\newcommand{\ques}{\texttt{?}}
\newcommand{\ble}[1]{\mathtt{#1}} 
\newcommand{\seg}[1]{\underline{\ble{#1}}}
\newcommand{\pgap}{\texttt{*}}
\newcommand{\Tbb}{T_\ble{bb}}
\newcommand{\Btb}{B_\ble{tb}}
\newcommand{\Tbt}{T_\ble{bt}}
\DeclareMathOperator{\ides}{ides}
\DeclareMathOperator{\des}{des}
\DeclareMathOperator{\Img}{Img}
\begin{document}
\title{On the growth of the M\"{o}bius function of permutations\thanks{Supported by project 
16-01602Y of the Czech Science Foundation (GA\v{C}R). The third and fourth author were also 
supported by Charles University project UNCE/SCI/004.}}

\author{V\'it Jel\'inek
\thanks{Computer Science Institute, Charles University, Faculty of Mathematics and Physics, 
Malostransk\'e n\'am\v est\'i 25, Prague, Czech Republic, \texttt{jelinek@iuuk.mff.cuni.cz}}
\and
Ida Kantor
\thanks{Computer Science Institute, Charles University, Faculty of Mathematics and Physics, 
Malostransk\'e n\'am\v est\'i 25, Prague, Czech Republic, \texttt{ida@iuuk.mff.cuni.cz}}
\and
Jan Kyn\v{c}l
\thanks{Department of Applied Mathematics and Institute for Theoretical Computer Science, Charles 
University, Faculty of Mathematics and Physics, Malostransk\'e n\'am\v est\'i 25, Prague, Czech 
Republic, \texttt{kyncl@kam.mff.cuni.cz}}
\and
Martin Tancer
\thanks{Department of Applied Mathematics, Charles University, Faculty of Mathematics and Physics, 
Malostransk\'e n\'am\v est\'i 25, Prague, Czech Republic, \texttt{tancer@kam.mff.cuni.cz}}
} 

\thispagestyle{empty}
\maketitle

\begin{abstract} We study the values of the M\"obius function $\mu$ of 
intervals in the containment poset of permutations. 
We construct a sequence of 
permutations $\pi_n$ of size $2n-2$ for which $\mu(1,\pi_n)$ is given by a 
polynomial in $n$ of degree~7. This construction provides the fastest known 
growth of $|\mu(1,\pi)|$ in terms of $|\pi|$, improving a previous quadratic 
bound by Smith.

Our approach is based on a formula expressing the M\"obius 
function of an arbitrary permutation interval $[\alpha,\beta]$ in terms of the 
number of embeddings of the elements of the interval into~$\beta$.
\end{abstract}

\noindent\textbf{keywords}: M\"obius function, permutation poset, permutation embedding.

\section{Introduction}
The M\"obius function of a poset is a classical parameter with applications in 
combinatorics, number theory and topology. From the combinatorial perspective, 
an important problem is to study the M\"obius function of containment posets of 
basic combinatorial structures, such as 
words~\cite{BjornerSubword,BjornerFactor,McNamaraSagan}, integer 
compositions~\cite{SaganVatter,goyt}, integer partitions~\cite{Ziegler}, or set 
partitions~\cite{EdelmanSimion,EhrenborgReaddy}.

Wilf~\cite{Wilf} was the first to propose the study of the M\"obius function of 
the containment poset of permutations, and it quickly became clear that in its 
full generality this is a challenging topic. This is not too surprising, 
considering that it is computationally hard even to determine whether two 
permutations are comparable in the permutation poset~\cite{BBL}, and moreover, 
the poset of permutations is also hard to tackle by topological tools: for 
instance, most of its intervals are not shellable~\cite{McNamaraSt15}.

Thus, the known formulas for the M\"obius function of the permutation poset are 
restricted to permutations of specific structure, such as layered 
permutations~\cite{SaganVatter}, 132-avoiding permutations~\cite{ST}, separable 
permutations~\cite{BJJS}, or permutations with a fixed number of 
descents~\cite{Smith_one,Smith_descents}.

In this paper, we study the growth of the value $\max\{|\mu (1,\pi)|; |\pi|=n\}$ as a function of 
$n$. Here $\mu(1,\pi)$ is the M\"{o}bius function of the interval $[1,\pi]$, where $1$ is the unique 
permutation of size one; see Section~\ref{s:prelim} for precise definitions. We give a construction 
showing that the rate of growth of this value is $\Omega(n^7)$, improving 
a previous result by Smith~\cite{Smith_one}, who obtained a quadratic lower bound.

Specifically, for $n\ge 1$, we define the permutation 
$\pi_n\in\cS_{2n+2}$ by
\[\pi_n=n+1,1,n+3,2,n+4,3,n+5,\dots,n,2n+2,n+2. 
\]
See Figure~\ref{f:pi_345}. Our main result is the following formula for~$\mu(1, \pi_n)$.

\begin{figure}
\begin{center}
\includegraphics{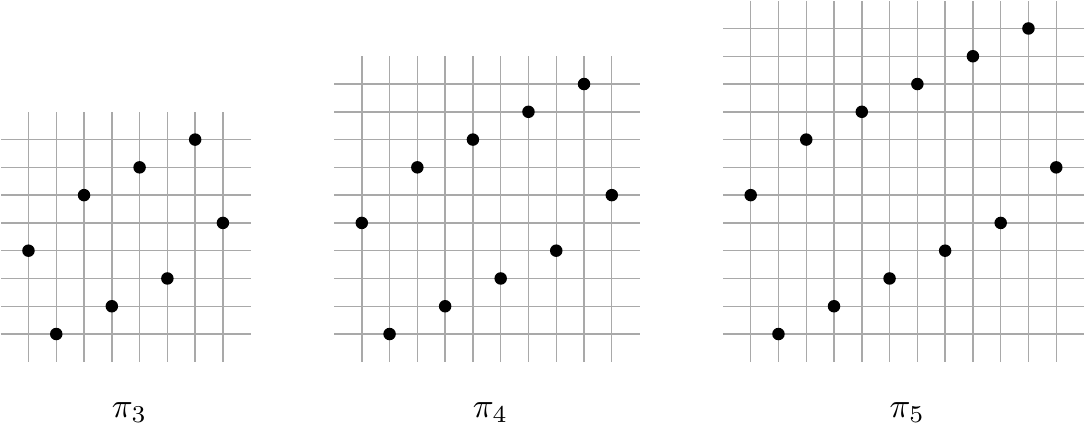}
\caption{Permutations $\pi_3$, $\pi_4$ and $\pi_5$.}
\label{f:pi_345}
\end{center}
\end{figure}

\begin{theorem}\label{thm-cross} For every $n\ge 2$, we have 
\[
\mu(1,\pi_n) = -\binom{n+2}{7}- \binom{n+1}{7}+ 2 \binom{n+2}{5}- \binom{n+2}{3} - \binom{n}{2}- 2n.
\]
\end{theorem}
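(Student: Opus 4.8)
The plan is to use the embedding formula announced in the abstract: for any interval $[\alpha,\beta]$ in the permutation poset, $\mu(\alpha,\beta)$ is an explicit signed sum, over all $\sigma\in[\alpha,\beta]$, of the number of embeddings of $\sigma$ into $\beta$ that meet a suitable ``normality'' condition (the sign depending only on $|\sigma|$, as for Bj\"orner's subword formula). Applying this with $\alpha=1$ and $\beta=\pi_n$ reduces the evaluation of $\mu(1,\pi_n)$ to counting, for every pattern $\sigma\le\pi_n$, the normal embeddings of $\sigma$ into $\pi_n$, and then taking the signed total. So the whole argument becomes a careful census of embeddings into the single, very structured permutation $\pi_n$.

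The second step is to exploit that structure. Reading $\pi_n$ by position, the odd positions carry the increasing ``top'' block $n{+}1,n{+}3,n{+}4,\dots,2n{+}2$, the even positions $2,4,\dots,2n$ carry the increasing ``bottom'' block $1,2,\dots,n$, and the last two entries $2n{+}2$ and $n{+}2$ are distinguished: thus $\pi_n$ is an increasing oscillation with a short special tail. Consequently an embedding of a pattern $\sigma$ into $\pi_n$ is essentially determined by which ``columns'' of the oscillation it uses, together with a bounded amount of extra data recording how, or whether, it hits the three exceptional entries $n{+}1$, $2n{+}2$, and $n{+}2$. I would introduce a parametrization of all (normal) embeddings by exactly this data; the $\Tbb,\Btb,\Tbt$-type labels and the split between the $\lambda$-indexed and $f$-indexed families in the paper's notation are precisely the bookkeeping for these finitely many structural cases, and the normality condition should become a transparent local condition once the parametrization is chosen well.

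The third step is the count itself. For the ``generic'' embeddings the only free data is a choice of seven distinguished columns among the $\Theta(n)$ available ones, which I expect to produce the dominant contributions $-\binom{n+2}{7}-\binom{n+1}{7}$ (the shift $n{+}2$ versus $n{+}1$ coming from whether a particular exceptional column is included); embeddings in which the special entries force two of those seven choices to coincide lose one or two degrees of freedom and contribute the $2\binom{n+2}{5}$ and $-\binom{n+2}{3}$ terms; and a small number of degenerate configurations localized at the tail contribute the residual $-\binom{n}{2}-2n$. Summing the signed counts leaves a finite combination of binomial coefficients in $n$, which collapses to the claimed closed form by routine binomial identities (absorption and Vandermonde-type rearrangements), completing the proof.

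The main obstacle will be the classification and counting in the second and third steps: deciding exactly which embeddings are normal in each structural type, checking that the list of types is exhaustive and the families pairwise disjoint, and tracking the off-by-one shifts so that the $\binom{n+2}{\cdot}$, $\binom{n+1}{\cdot}$, and $\binom{n}{\cdot}$ terms — and their signs — match the stated polynomial on the nose. None of this is deep, but it is unforgiving; almost all of the real work lies in choosing a parametrization of embeddings that makes the normality condition and the boundary cases manageable, rather than in any single hard estimate.
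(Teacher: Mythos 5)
There is a genuine gap, and it is at the very start: the formula you build everything on does not exist. You assume that for an arbitrary interval $[\alpha,\beta]$ of the permutation poset, $\mu(\alpha,\beta)$ equals a signed sum over all $\sigma\in[\alpha,\beta]$ of counts of ``normal'' embeddings of $\sigma$ into $\beta$, with sign depending only on $|\sigma|$, in the style of Bj\"orner's subword formula. No such closed formula is available here. The tool the abstract refers to (Proposition~\ref{pro-form} and Corollary~\ref{cor-rek}) is not a signed embedding count: it is a M\"obius-inversion-type \emph{recursion}, in which the outer sum is weighted by the unknown values $\mu(\sigma,\lambda)$ for intermediate $\lambda\in[\sigma,\beta)$. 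A Bj\"orner/Smith-type normal-embedding formula (Fact~\ref{fac-descents}, Corollary~\ref{cor_descents}) applies only when $\ides(\sigma)=\ides(\pi)$, and precisely fails for $[1,\pi_n]$, since $\ides(\pi_n)=2$; indeed the whole point of the theorem is that $|\mu(1,\pi_n)|$ is far larger than any normal-embedding count of a single pattern, and the interval $[1,\pi_n]$ is not isomorphic to an interval of the subword order. So your step one replaces the hard objects ($\mu(1,\lambda)$, or after the reduction $\mu(1,\pi_n)=-\mu(21,\pi_n)$ via Corollary~\ref{cor-dec}, the values $\mu(21,\lambda)$) by a formula that would itself have to be proved, and is false as stated.

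Because of this, steps two and three have no foundation and remain a description of the answer rather than a derivation of it. The actual work in the paper is exactly what your sketch omits: after applying Corollary~\ref{cor-rek} to $\mu(21,\pi_n)$, one must show that almost all intermediate $\lambda$ contribute zero (via parity-exchanging switches, the ``omitted position'' argument, cups/caps, and the vanishing of $\mu(21,\lambda)$ when $\lambda$ has too many repetitions), classify the surviving ``proper'' $\lambda$ together with their rightmost/greedy embeddings, and then run a delicate cancellation analysis on singular embeddings before the five structural cases produce $-\binom{n+2}{3}$, $2\binom{n+2}{5}$, $-\binom{n+1}{7}-\binom{n}{7}$, $-\binom{n+1}{6}-\binom{n}{6}$, etc. Your assignment of the binomial terms to ``generic'' versus ``degenerate'' embeddings is a guess at how the final polynomial might decompose, not an argument; without a correct replacement for the first step, and without the involution/cancellation machinery, the proposal does not prove the theorem.
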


The M\"obius function is closely related to the topological properties of the 
underlying poset. In particular, the M\"obius function is equal to the reduced 
Euler characteristic of the order complex of the poset, making it a homotopy 
invariant of the order complex. For an overview of the topological aspects of posets, 
the interested reader may consult the survey by Wachs~\cite{Wachs}.
Several previous results on the M\"obius 
function of posets are based on topological tools~\cite{SaganVatter,Smith_formula}.
In this paper, however, our approach is purely combinatorial and requires no 
topological background. 

Our main tool is a formula relating the M\"obius function of the permutation poset to 
the number of embeddings between pairs of permutations. We believe this formula 
(Proposition~\ref{pro-form} and the closely related Corollary~\ref{cor-rek}) may find further 
applications in the study of the M\"obius function. In fact, several such applications already 
emerged from our joint work with Brignall and Marchant~\cite{Zeros}, which is being
prepared for publication in parallel with this paper.

\section{Definitions and preliminaries}\label{sec-defs}
\label{s:prelim}

\paragraph*{Permutations and their diagrams} 
Let $[n]$ denote the set 
$\{1,2,\dots,n\}$. A \emph{permutation} of size $n$ 
is a bijection $\pi$ of $[n]$ onto itself. We represent such a permutation 
$\pi$ as the sequence of values $\pi(1),\pi(2),\dots,\pi(n)$. 
If there is no risk of ambiguity, we omit the commas and write, for example, $312$ 
for the permutation $\pi$ with $\pi(1)=3$, $\pi(2)=1$ and $\pi(3)=2$. 

The \emph{diagram} of a permutation $\pi$ is the set of points $\{(i,\pi(i));\ i\in[n]\}$ in the
plane; in other words, it is the graph of $\pi$ as a function. Let $\cS_n$ be the set of permutations of 
size $n$, and let $\cS=\bigcup_{n\ge 1} \cS_n$ be the set of all finite 
permutations.

\paragraph*{Embeddings} 
A sequence of real numbers $a_1,a_2,\dots,a_n$ is \emph{order-isomorphic} to a sequence
$b_1,b_2,\dots,b_n$ if for every $i,j\in[n]$ we have $a_i<a_j \Leftrightarrow b_i<b_j$. 

An \emph{embedding} of a permutation $\sigma\in\cS_k$ into a permutation 
$\pi\in\cS_n$ is a function $f\colon[k]\to[n]$ such that 
$f(1)<f(2)<\dots<f(k)$, and the sequence $\pi(f(1)),\pi(f(2)),\dots,\pi(f(k))$ 
is order-isomorphic to $\sigma(1),\sigma(2),\dots,\sigma(k)$.
The \emph{image} of an embedding $f$ is the set $\Img(f)=\{f(i);\; 
i\in[k]\}$. Observe that for a given $\pi$, the set $\Img(f)$ determines both 
$f$ and $\sigma$ uniquely.
 
If there is an embedding of $\sigma$ into $\pi$, we say that $\pi$ 
\emph{contains}~$\sigma$, and write $\sigma\le\pi$, otherwise we say that $\pi$ 
\emph{avoids}~$\sigma$. The containment relation $\le$ is a partial 
order on~$\cS$. We will call the pair $(\cS,\le)$ the \emph{permutation poset}.

We let $\sE(\sigma,\pi)$ denote the set of embeddings of $\sigma$ into $\pi$, and we let 
$\E(\sigma,\pi)$ denote the cardinality of~$\sE(\sigma,\pi)$.

\begin{figure}
  \centerline{\includegraphics[width=0.8\linewidth]{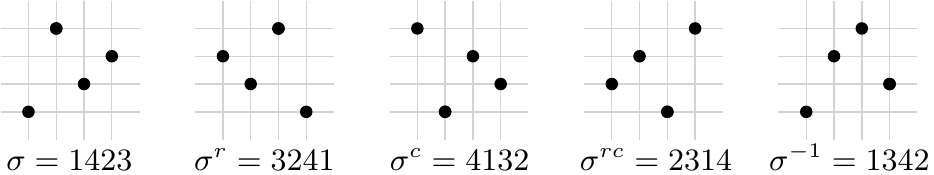}}
  \caption{The main permutation symmetries.}
  \label{fig-sym}
\end{figure}

\paragraph*{Permutation symmetries} For a permutation $\pi=\pi(1)\pi(2)\dots\pi(n)$, 
its \emph{reverse} $\pi^r$ is the permutation $\pi(n)\pi(n-1)\dots\pi(1)$, its 
\emph{complement} $\pi^c$ is the permutation 
$n+1-\pi(1),n+1-\pi(2),\dots,n+1-\pi(n)$, its \emph{reverse-complement} 
$\pi^{rc}$ is the permutation $(\pi^r)^c$, and its \emph{inverse} is the 
permutation $\pi^{-1}\in\cS_n$ with the property $\pi^{-1}(\pi(i))=i$ for every 
$i\in[n]$. Observe that these operations correspond to reflections or rotations
of the diagram of $\pi$; see Figure~\ref{fig-sym}. Although reflections and rotations generate 
an 8-element group of symmetries, in this paper we only need the five symmetries 
depicted in Figure~\ref{fig-sym}.

Note that these operations are poset automorphisms of $(\cS, \le)$, that is, 
$\sigma\le\pi$ is equivalent to $\sigma^r\le \pi^r$, $\sigma^c\le\pi^c$, 
$\sigma^{rc}\le\pi^{rc}$, and $\sigma^{-1}\le\pi^{-1}$.

\paragraph*{The M\"obius function}
For a poset $(P,\le)$, we let $[x,y]$ denote the 
closed interval $\{z\in P;\; x\le z\le y\}$, and $[x,y)$ the half-open 
interval $\{z\in P;\; x\le z<y\}$. A poset $(P,\le)$ is \emph{locally finite} if
each of its intervals is finite. Given a locally finite poset $(P,\le)$, we  
define its \emph{M\"obius function} $\mu\colon P\times P\to \bbZ$ by the 
recurrences
\[
 \mu(x,y)=\begin{cases}
             0 \text{ if } x \not\le y\\
1 \text{ if } x=y\\
-\sum_{z\in[x,y)} \mu(x,z) \text{ if } x<y.
            \end{cases}
\]

A \emph{chain} from $x\in P$ to $y\in P$ is a set 
$C=\{x_0,x_1,\dots,x_k\}\subseteq P$ such that $x_0=x$, $x_k=y$, and 
$x_{i-1}<x_i$ for every $i\in[k]$. The \emph{length} of a chain $C$, denoted by
$\ell(C)$, is defined as $|C|-1$. We let $\chain{x}{y}$ denote the set of 
all chains from $x$ to~$y$.

For an arbitrary set $\cC$ of chains, we define the \emph{weight of $\cC$}, 
denoted by $w(\cC)$, as $\sum_{C\in\cC} (-1)^{\ell(C)}$.

We will need a classical identity known as Philip Hall's Theorem, which expresses the M\"obius function of an interval as the reduced Euler characteristic of the corresponding order complex. For details, see, for example, Stanley~\cite[Proposition 3.8.5]{Stan} or Wachs~\cite[Proposition 1.2.6]{Wachs}.

\begin{fact}[Philip Hall's Theorem]
\label{fac-hall} 
If $(P,\le)$ is a locally finite poset with elements $x$ and $y$, then 
$\mu(x,y)=w(\chain{x}{y})$.
\end{fact}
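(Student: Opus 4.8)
The plan is to prove the identity by induction on the number of elements of the interval $[x,y]$, which is finite because $P$ is locally finite. First I would clear away the degenerate cases. If $x\not\le y$, then $\chain{x}{y}=\emptyset$ and both $w(\chain{x}{y})$ and $\mu(x,y)$ equal $0$. If $x=y$, the only chain from $x$ to $y$ is the singleton $\{x\}$, which has length $0$, so $w(\chain{x}{y})=(-1)^0=1=\mu(x,x)$; this also handles the base case $|[x,y]|=1$. From now on assume $x<y$, and assume the identity holds for every interval properly contained in $[x,y]$.

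The key step is a sign-reversing bookkeeping argument based on deleting the top of a chain. Since $x<y$, every chain $C=\{x_0,\dots,x_k\}\in\chain{x}{y}$ has $k\ge 1$, so we may remove its maximum $x_k=y$; what remains is a chain from $x$ to $x_{k-1}$ of length $k-1$, and $x_{k-1}\in[x,y)$. Conversely, appending $y$ to an arbitrary chain from $x$ to some $z\in[x,y)$ produces a chain in $\chain{x}{y}$ whose length is one larger. This sets up a bijection between $\chain{x}{y}$ and the disjoint union $\bigsqcup_{z\in[x,y)}\chain{x}{z}$ under which the length goes up by exactly $1$. Summing the signs $(-1)^{\ell(C)}$ across this bijection (and using that $\chain{x}{y}$ contains no chain of length $0$) yields
\[
w(\chain{x}{y})=\sum_{z\in[x,y)}\ \sum_{C\in\chain{x}{z}}(-1)^{\ell(C)+1}=-\sum_{z\in[x,y)}w(\chain{x}{z}).
\]

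Finally I would apply the induction hypothesis. For each $z\in[x,y)$, the interval $[x,z]$ is a proper subinterval of $[x,y]$ (it does not contain $y$), so $w(\chain{x}{z})=\mu(x,z)$; substituting into the displayed equation gives $w(\chain{x}{y})=-\sum_{z\in[x,y)}\mu(x,z)$, which is exactly the recursive definition of $\mu(x,y)$ in the case $x<y$. This completes the induction. I do not anticipate a genuine obstacle: the only points that need care are the treatment of the trivial one-element chains and running the induction on $|[x,y]|$ (so that all subintervals $[x,z]$ with $z<y$ are strictly smaller). As an alternative one could carry out the same computation inside the incidence algebra: writing $\zeta=\delta+\eta$ with $\eta(u,v)=1$ exactly when $u<v$, the power $\eta^{k}(x,y)$ counts chains from $x$ to $y$ of length $k$, and $\mu=\zeta^{-1}=\sum_{k\ge 0}(-1)^{k}\eta^{k}$ is a finite sum by local finiteness, giving the same conclusion.
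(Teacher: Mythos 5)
Your argument is correct, and it is worth noting that the paper itself does not prove Fact~\ref{fac-hall}: it is stated as a known result, with the reader referred to Stanley and to Wachs for a proof. Your induction on $|[x,y]|$ (legitimate because local finiteness makes the interval, and hence the chain set $\chain{x}{y}$, finite) is a sound self-contained substitute. The key bijection is fine: deleting the maximum $y$ from a chain $C\in\chain{x}{y}$ lands in $\chain{x}{z}$ where $z$ is the second-largest element of $C$, and since $z<y$ the interval $[x,z]$ omits $y$ and is strictly smaller, so the inductive hypothesis applies and the resulting recursion $w(\chain{x}{y})=-\sum_{z\in[x,y)}w(\chain{x}{z})$ matches the defining recurrence for $\mu$ exactly. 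Your alternative sketch via the incidence algebra, expanding $\mu=\zeta^{-1}=\sum_{k\ge0}(-1)^k\eta^k$ with $\eta^k(x,y)$ counting chains of length $k$, is essentially the proof found in the cited references; the two routes are just two packagings of the same sign-reversing count, with the inductive version being more elementary and the algebraic version giving the identity in one line once the incidence-algebra formalism is set up.
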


The following symmetry property of the M\"obius function is a direct consequence of Philip Hall's Theorem.

\begin{corollary}\label{cor-dual}
Let $(P,\le)$ be a locally finite poset with M\"obius function~$\mu$. Let 
$\le^*$ be the partial order on $P$ defined by $x\le^* y \Leftrightarrow y\le x$. The 
M\"obius function $\mu^*$ of the poset $(P,\le^*)$ then satisfies 
$\mu^*(x,y)=\mu(y,x)$.
\end{corollary}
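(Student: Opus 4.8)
The plan is to obtain the identity as an immediate consequence of Philip Hall's Theorem (Fact~\ref{fac-hall}), which applies verbatim to any locally finite poset. The first step is to confirm that $(P,\le^*)$ is locally finite, so that $\mu^*$ is even defined: for $x,y\in P$ a point $z$ satisfies $x\le^* z\le^* y$ exactly when $y\le z\le x$, so the $\le^*$-interval $[x,y]$ coincides, as a set, with the $\le$-interval $[y,x]$, which is finite by assumption.

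The second step is the observation at the heart of the statement: chains are unchanged by passing to the dual order. A chain from $x$ to $y$ in $(P,\le^*)$ is a subset $C=\{x_0,x_1,\dots,x_k\}\subseteq P$ with $x_0=x$, $x_k=y$, and $x_{i-1}<^* x_i$ for all $i\in[k]$; since $u<^* v \Leftrightarrow v<u$, this is precisely the condition that $C$ be a chain from $y$ to $x$ in $(P,\le)$. Hence the collection of all chains from $x$ to $y$ in $(P,\le^*)$ is literally equal, as a set of subsets of $P$, to $\chain{y}{x}$; in particular the two collections have the same weight, since $\ell(C)=|C|-1$ does not depend on the ordering.

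The final step is to combine these facts. Applying Fact~\ref{fac-hall} in the poset $(P,\le^*)$ gives $\mu^*(x,y)=w\big(\text{the set of chains from }x\text{ to }y\text{ in }(P,\le^*)\big)$, which by the previous step equals $w(\chain{y}{x})$, and this in turn equals $\mu(y,x)$ by Fact~\ref{fac-hall} applied in $(P,\le)$. There is essentially no obstacle here; the only point requiring care is keeping track of the fact that a chain is defined as a set rather than a sequence, which is exactly what makes the two chain-collections coincide on the nose and makes length-preservation automatic. (Should one prefer to avoid Philip Hall's Theorem altogether, the same equality follows by a routine induction on $|[y,x]|$ from the defining recurrence for $\mu$, but the argument above is shorter and is the natural one given that Fact~\ref{fac-hall} has just been recorded.)
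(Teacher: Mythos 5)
Your proof is correct and follows exactly the route the paper intends: the corollary is stated there as a direct consequence of Philip Hall's Theorem, with the key point being that chains (as sets) from $x$ to $y$ in the dual order coincide with chains from $y$ to $x$ in the original order. Your additional check of local finiteness of the dual poset is a harmless and sensible inclusion.
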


We will often use the following easy identities, where the first one follows 
from the definition of $\mu$, and the second one from Corollary~\ref{cor-dual}.
 
\begin{fact}\label{fac-sum} For a locally finite poset $P$ and a 
pair of elements $x,y\in P$ with $x<y$, we have $\sum_{z\in[x,y]} \mu(x,z)=0$  
and $\sum_{z\in[x,y]} \mu(z,y)=0$.
\end{fact}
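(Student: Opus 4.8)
The plan is to derive both identities directly from the recursive definition of $\mu$, using Corollary~\ref{cor-dual} to pass from the first identity to the second. Since $x<y$, the closed interval $[x,y]$ is the disjoint union of the half-open interval $[x,y)$ and the singleton $\{y\}$, so I would split the sum in the first identity as
\[
\sum_{z\in[x,y]}\mu(x,z)=\left(\sum_{z\in[x,y)}\mu(x,z)\right)+\mu(x,y).
\]
The defining recurrence for $\mu$ in the case $x<y$ states precisely that $\mu(x,y)=-\sum_{z\in[x,y)}\mu(x,z)$. Substituting this into the right-hand side cancels the two terms and yields $0$, which establishes the first identity with no further work.

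For the second identity I would apply the first identity in the dual poset $(P,\le^*)$, whose M\"obius function $\mu^*$ satisfies $\mu^*(a,b)=\mu(b,a)$ by Corollary~\ref{cor-dual}. First I would record two elementary facts about the duality: that $x<y$ in $\le$ is equivalent to $y<^* x$ in $\le^*$, and that the closed interval $\{z:\ y\le^* z\le^* x\}$ taken in $(P,\le^*)$ coincides with the interval $[x,y]$ taken in $(P,\le)$, since $y\le^* z\le^* x$ unwinds to $x\le z\le y$. Applying the already-proved first identity to the pair $y<^* x$ in the dual poset then gives $\sum_{z\in[x,y]}\mu^*(y,z)=0$, and rewriting each summand via $\mu^*(y,z)=\mu(z,y)$ converts this into $\sum_{z\in[x,y]}\mu(z,y)=0$, as required.

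The only point requiring care is keeping the order of arguments straight when invoking the duality, since $\mu^*(y,z)$ equals $\mu(z,y)$ and not $\mu(y,z)$; apart from this bookkeeping the argument is a direct unwinding of definitions, and I do not expect any genuine obstacle. One could alternatively prove the second identity from scratch by induction on the length of the longest chain in $[x,y]$, but routing it through Corollary~\ref{cor-dual} is cleaner and avoids repeating essentially the same computation.
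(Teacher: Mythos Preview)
Your proposal is correct and follows exactly the approach sketched in the paper: the first identity is an immediate rearrangement of the defining recurrence for $\mu$, and the second is obtained from the first via Corollary~\ref{cor-dual}. There is nothing to add.
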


We will also use the M\"obius inversion formula, which is a basic property of the M\"obius 
function. The following form of the formula can be deduced, for example, from Proposition 3.7.2 in 
Stanley's book~\cite{Stan}.

\begin{fact}[M\"obius inversion formula]\label{fac-mif} Let $P$ be a locally finite poset with maximum 
element $y$, let $\mu$ be the M\"obius function of $P$, and let $f\colon 
P\to\bbR$ be a function. If a function $g\colon P\to\bbR$ is defined by
\[
g(x)=\sum_{z\in[x,y]} f(z),
\]
then for every $x\in P$, we have
\[
f(x)=\sum_{z\in[x,y]} \mu(x,z)g(z).
\]
\end{fact}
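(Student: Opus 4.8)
The plan is to verify the formula by direct substitution, expanding $g$ in the right-hand side and interchanging the order of summation; the local finiteness of $P$ guarantees that all the sums involved are finite, so no convergence issues arise.

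Concretely, I would fix an element $x\in P$ and write
\[
\sum_{z\in[x,y]} \mu(x,z)g(z) = \sum_{z\in[x,y]} \mu(x,z)\sum_{w\in[z,y]} f(w).
\]
Here $[x,y]$ is finite because $P$ is locally finite, and for each $z\in[x,y]$ the interval $[z,y]$ is contained in $[x,y]$ and hence also finite; thus the double sum on the right is a finite sum and may be reordered at will. A pair $(z,w)$ occurs in it exactly when $x\le z\le y$ and $z\le w\le y$, that is, when $x\le z\le w\le y$, so grouping the terms by the value of $w$ first yields
\[
\sum_{z\in[x,y]} \mu(x,z)g(z) = \sum_{w\in[x,y]} f(w)\biggl(\sum_{z\in[x,w]} \mu(x,z)\biggr).
\]

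It then remains to evaluate the inner sum $\sum_{z\in[x,w]}\mu(x,z)$. When $w=x$ this sum is simply $\mu(x,x)=1$, and when $x<w$ it equals $0$ by Fact~\ref{fac-sum}. Consequently every term with $w\neq x$ vanishes, and the right-hand side collapses to the single term $f(x)$, which is exactly the claimed identity.

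I do not anticipate any real difficulty in this argument: the only subtle point is that the interchange of the two summations must be justified, and this is precisely the place where the hypothesis that $P$ is locally finite (together with the presence of the maximum element $y$, ensuring $g$ is well defined) is used. One could alternatively quote Stanley's Proposition~3.7.2 directly, but the self-contained derivation above is short enough to spell out in full.
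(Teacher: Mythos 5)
Your argument is correct, and it is worth noting that the paper itself offers no proof of this statement: it is recorded as a Fact with a pointer to Proposition~3.7.2 of Stanley's book, so there is no in-paper argument to compare against. Your self-contained derivation is the standard one — expand $g$, interchange the two (finite) sums over pairs $x\le z\le w\le y$, and collapse the inner sum $\sum_{z\in[x,w]}\mu(x,z)$ to $[w=x]$ using $\mu(x,x)=1$ and Fact~\ref{fac-sum} for $x<w$ — and it meshes cleanly with the paper's setup: Fact~\ref{fac-sum} is derived directly from the recursive definition of $\mu$, so there is no circularity, and you correctly identify where local finiteness (finiteness of $[x,y]$, hence of the double sum) and the maximum element $y$ (well-definedness of $g$ and of the intervals $[x,y]$ for all $x$) are used. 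What your version buys over the paper's citation is a short, self-contained proof in exactly the form of the statement used later (Proposition~\ref{pro-form}), at the cost of a few lines; quoting Stanley, as the paper does, buys brevity and defers the bookkeeping to a standard reference.
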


\paragraph*{A lemma on decreasing patterns}
From now on, we only deal with the poset $(\cS, \le)$ of permutations ordered by 
the containment relation, and $\mu$ refers to the M\"obius function of 
this poset.

\begin{lemma}\label{lem-dec} Let $\delta_k$ be the decreasing permutation 
of size $k$; that is, $\delta_k=k,(k-1),\dots,1$. For any permutation $\pi$ 
other than $1$ or $12$, we 
have
\begin{equation}\label{eq-dec}
 \sum_{k=1}^{|\pi|} \mu(\delta_k,\pi) = 0.
\end{equation}
\end{lemma}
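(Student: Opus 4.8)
The plan is to use Philip Hall's Theorem (Fact~\ref{fac-hall}) to rewrite the sum in terms of chains, and then exhibit a sign-reversing involution on the relevant set of chains. Concretely, for a fixed permutation $\pi$ with $|\pi|=n$, consider the set $\mathcal{D}$ of all chains $C$ from some $\delta_k$ to $\pi$, as $k$ ranges over $1,\dots,n$; that is, $\mathcal{D}=\bigcup_{k=1}^n \chain{\delta_k}{\pi}$. By Fact~\ref{fac-hall}, $\sum_{k=1}^n \mu(\delta_k,\pi) = \sum_{k=1}^n w(\chain{\delta_k}{\pi}) = w(\mathcal{D})$, so it suffices to show $w(\mathcal{D})=0$. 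Note that the $\chain{\delta_k}{\pi}$ are genuinely disjoint: a chain records its minimum element, so each chain in $\mathcal{D}$ lies in exactly one of them. A chain $C=\{x_0,x_1,\dots,x_j\}\in\mathcal{D}$ has $x_0=\delta_k$ for some $k$ and $x_j=\pi$.

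The key idea is to define an involution $\Phi\colon\mathcal{D}\to\mathcal{D}$ that changes the chain length by exactly one, hence flips the sign $(-1)^{\ell(C)}$, and that is fixed-point-free. The natural candidate: given $C=\{\delta_k=x_0<x_1<\dots<x_j=\pi\}$, look at the bottom of the chain. Since every $\delta_m$ is below $\pi$ for all $m\le n$ (as $\pi$ is not $1$ or $12$, in fact $\pi\ge\delta_2$ is not automatic, but $\delta_1=1\le\pi$ always and we must be careful about which $\delta_m\le\pi$), we want to toggle whether $\delta_{k}$ is ``minimal possible'' or can be pushed down. The cleanest version: if $x_1$ (or $x_0=\pi$ itself in the trivial chain) is such that $\delta_{k}$ is not the largest decreasing pattern below $x_1$, i.e.\ if $\delta_{k+1}\le x_1$, then \emph{remove} $x_0$ and prepend $\delta_{k+1}$ appropriately — more precisely, replace $x_0=\delta_k$ by $\delta_{k+1}$ if $\delta_{k+1}\notin C$ and $\delta_{k+1}<x_1$, which shortens or lengthens the chain at the bottom. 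To make this a clean involution, I would instead fix a reference: let $r=r(\pi)$ be the largest integer with $\delta_r\le\pi$, and for a chain $C$ let $m(C)$ be the largest index with $\delta_{m(C)}\in C$. The map $\Phi$ toggles membership of $\delta_{m(C)+1}$ (if $m(C)<r$) or of $\delta_{m(C)}$ — adding it if absent, deleting it if present — among the decreasing-prefix part of the chain. Because $\delta_1<\delta_2<\dots<\delta_r$ is itself a chain sitting below $\pi$, and any chain's elements that are decreasing permutations form an increasing subsequence of this fixed chain, toggling one of them (the right one) always produces another valid chain, changes the length by one, and squares to the identity.

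The main obstacle is making the involution genuinely well-defined and fixed-point-free, and this is exactly where the hypothesis ``$\pi\ne 1,12$'' enters. If $\pi=1$ then $\mathcal{D}$ contains only the one-element chain $\{1\}$ of weight $1$; if $\pi=12$ then the only $\delta_k\le\pi$ are $\delta_1=1$ and (since $12$ does not contain $21=\delta_2$) nothing else, so again $\mathcal{D}=\{\{1,12\},\{12\}\}$ has weight $1-1=0$ — wait, that vanishes, so one must double-check the precise exceptional set; the real obstruction is $\pi=1$, and possibly the statement is phrased to also exclude $12$ because $r(\pi)=1$ there, making the ``toggle $\delta_{m(C)}$'' branch collapse. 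I would therefore split into cases by the value $r=r(\pi)$: when $r\ge 2$ the toggle of $\delta_1$ versus presence of $\delta_2$ (or more generally toggling the smallest ``gap'' in the decreasing prefix relative to $\delta_1,\dots,\delta_r$) is always available and fixed-point-free; when $r=1$, i.e.\ $\pi$ avoids $21$ so $\pi$ is an increasing permutation $12\cdots n$, a separate short argument is needed — and there I would instead toggle membership of $\pi$'s bottom element $1=\delta_1$ is forced, so one toggles at the \emph{top} using $\delta_{?}$, or simply compute $\mathcal{D}$ directly since increasing permutations have a well-understood interval. Verifying the $r=1$ case and confirming it forces $n\ge 2$ (equivalently $\pi\ne 1$) to be excluded — and checking $\pi=12$ separately — is the fiddly part; the generic involution itself is routine once set up.
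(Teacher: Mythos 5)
Your setup (Philip Hall's Theorem plus a parity-reversing involution on $\cC=\bigcup_{k}\chain{\delta_k}{\pi}$) is the same as the paper's, but the involution you sketch has a genuine gap: it only ever toggles decreasing permutations $\delta_j$, and no such toggle can work on chains whose only decreasing member is $\delta_1=1$. Concretely, take $\pi=2134$ and $C=\{1,123,\pi\}$, so $r(\pi)=2$ and $m(C)=1$: you cannot add $\delta_2=21$, because $21\not\le 123$ and hence $C\cup\{21\}$ is not a chain, and you cannot delete $\delta_1$, because $\{123,\pi\}$ no longer starts at any $\delta_k$ and so leaves $\cC$. Thus neither branch of your toggle is available, and this is not a fringe phenomenon confined to your case $r=1$ (increasing $\pi$): chains of this shape exist for essentially every $\pi$, so your claim that for $r\ge 2$ the toggle is ``always available and fixed-point-free'' fails, and these unmatched chains are exactly where the difficulty lives. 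The paper's proof handles them by splitting $\cC$ into $\cC_1$, the chains containing a decreasing permutation of size at least $2$ (there toggling $1$ works, since any permutation below a decreasing one is itself decreasing), and $\cC_2=\cC\setminus\cC_1$, where the toggled element is the permutation $12$ --- not a decreasing permutation at all, a move your scheme never considers; it works because every element other than $1$ in a chain of $\cC_2$ must contain $12$.

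Your verification of the exceptional cases is also wrong for $\pi=12$: the set $\{12\}$ is not a chain from any $\delta_k$ to $12$ (since $12$ is not decreasing), and $\chain{\delta_2}{12}=\emptyset$, so the sum in question equals $\mu(1,12)+\mu(21,12)=-1\neq 0$. Hence $12$ genuinely must be excluded, and in the correct argument the two exclusions are used precisely in the $\cC_2$ involution: $\pi\neq 12$ guarantees that deleting $12$ never deletes the top of a chain, and $\pi\neq 1$ guarantees that $12\le\pi$, so $12$ can always be inserted.
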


\begin{proof} Consider the set of chains $\cC=\bigcup_{k=1}^{|\pi|} 
\chain{\delta_k}{\pi}$. In view of Fact~\ref{fac-hall}, equation~\eqref{eq-dec} 
is equivalent to $w(\cC)=0$.

Define two sets of chains $\cC_1$ and $\cC_2$ as follows:
\begin{align*}
 \cC_1&=\{C\in\cC;\; C\text{ contains a decreasing permutation of 
size at least 2} \}, \text{ and}\\
\cC_2&=\cC\setminus\cC_1.
\end{align*}
Clearly, $w(\cC)=w(\cC_1)+w(\cC_2)$. We will show that both $\cC_1$ and $\cC_2$ 
have zero weight.

To see that $w(\cC_1)=0$, consider a parity-exchanging involution $\Phi_1$ on 
$\cC_1$ defined as follows: if $C\in\cC_1$ contains the permutation 1, define 
$\Phi_1(C)=C\setminus\{1\}$, otherwise define $\Phi_1(C)=C\cup\{1\}$. We see 
that $\Phi_1$ is an involution on $\cC_1$ that maps chains of odd length to 
chains of even length and vice versa. Therefore $w(\cC_1)=0$.

To deal with $\cC_2$, consider the mapping $\Phi_2$ that maps a chain 
$C\in\cC_2$ to $C\setminus\{12\}$ if $C$ contains $12$, and it maps $C$ to 
$C\cup\{12\}$ otherwise. This is again easily seen to be a parity-exchanging 
involution on $\cC_2$, showing that $w(\cC_2)=0$.
\end{proof}

In our applications, we will use Lemma~\ref{lem-dec} in the situation 
when $\pi$ avoids $321$. In such cases, the sum on the left-hand side of 
\eqref{eq-dec} has at most two nonzero summands, and the identity can be 
rephrased as follows.

\begin{corollary}\label{cor-dec}
 Any 321-avoiding permutation $\pi$ other than 1 or 12 satisfies 
\[\mu(1,\pi)=-\mu(21,\pi).\]
\end{corollary}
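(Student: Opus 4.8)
The plan is to prove Corollary~\ref{cor-dec} as an essentially immediate specialization of Lemma~\ref{lem-dec}. First I would observe that if $\pi$ avoids $321$, then $\pi$ contains no decreasing subsequence of length~$3$, so $\delta_k\le\pi$ fails for every $k\ge 3$. Consequently, in the sum $\sum_{k=1}^{|\pi|}\mu(\delta_k,\pi)$ from equation~\eqref{eq-dec}, every term with $k\ge 3$ vanishes by the defining convention $\mu(x,y)=0$ when $x\not\le y$. What remains is just $\mu(\delta_1,\pi)+\mu(\delta_2,\pi)$, and since $\delta_1=1$ and $\delta_2=21$, Lemma~\ref{lem-dec} gives $\mu(1,\pi)+\mu(21,\pi)=0$, which rearranges to the claimed identity.

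The only point that needs a word of care is the hypothesis of Lemma~\ref{lem-dec}, which requires $\pi\neq 1$ and $\pi\neq 12$; this is exactly the hypothesis inherited by Corollary~\ref{cor-dec}, so there is nothing extra to check. One might also want to note that $\delta_2=21\le\pi$ genuinely holds for the relevant $\pi$ (otherwise the statement would be vacuous or degenerate): since $\pi$ is $321$-avoiding and not equal to $1$ or $12$, it has size at least~$2$; if it also avoided $21$ it would be an increasing permutation, but the only increasing permutations not equal to $1$ or $12$ have size at least~$3$ and still contain $21$\dots\ actually the cleaner remark is simply that both $\mu(1,\pi)$ and $\mu(21,\pi)$ are well-defined integers (the latter being $0$ if $21\not\le\pi$), and the identity holds regardless. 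I would include a brief sentence to this effect so the reader sees the statement is meaningful.

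There is essentially no obstacle here: the entire content is the reduction of an indexed sum to its first two terms using the $321$-avoidance hypothesis to kill all higher terms. I would keep the proof to two or three sentences, pointing back to Lemma~\ref{lem-dec} and the remark immediately preceding the corollary in the text, which already flags that the sum has at most two nonzero summands when $\pi$ avoids $321$.

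\begin{proof}
Since $\pi$ avoids $321$, it contains no decreasing subsequence of length~$3$, so $\delta_k\not\le\pi$ for every $k\ge 3$, and hence $\mu(\delta_k,\pi)=0$ for all such~$k$. Therefore the sum in \eqref{eq-dec} reduces to its terms for $k=1$ and $k=2$, that is, to $\mu(\delta_1,\pi)+\mu(\delta_2,\pi)=\mu(1,\pi)+\mu(21,\pi)$. As $\pi$ is neither $1$ nor $12$, Lemma~\ref{lem-dec} applies and gives $\mu(1,\pi)+\mu(21,\pi)=0$, which is the desired identity.
\end{proof}
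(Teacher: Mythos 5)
Your proof is correct and follows exactly the argument the paper intends: $321$-avoidance kills all terms $\mu(\delta_k,\pi)$ with $k\ge 3$ in Lemma~\ref{lem-dec}, leaving $\mu(1,\pi)+\mu(21,\pi)=0$. This matches the remark the paper makes just before the corollary, so there is nothing to add.
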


We remark that a slightly more restricted case of Corollary~\ref{cor-dec} has 
already been proven by Smith~\cite[Lemma 3.6]{Smith_descents}, by a topological 
argument.

\paragraph*{M\"obius function via embeddings} 

The core of our argument is the following general formula expressing the M\"obius function in terms 
of another function~$f$. It can be seen as a version of the M\"obius inversion formula. It can be generalized in a straightforward way to an arbitrary locally finite poset, although for our purposes, we only state it in the permutation setting.

\begin{proposition}\label{pro-form}
Let $\sigma$ and $\pi$ be arbitrary permutations, and let $f\colon [\sigma,\pi]\to\bbR$ be a 
function satisfying $f(\pi)=1$. We then have
\begin{equation}
 \mu(\sigma,\pi)= f(\sigma) - \sum_{\lambda\in 
[\sigma,\pi)} 
\mu(\sigma,\lambda)\sum_{\tau\in[\lambda,\pi]} f(\tau).
\label{eq-form}
\end{equation}
\end{proposition}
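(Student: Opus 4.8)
The plan is to recognize \eqref{eq-form} as a mild repackaging of the M\"obius inversion formula. First I would introduce the auxiliary function $g\colon[\sigma,\pi]\to\bbR$ defined by $g(\lambda)=\sum_{\tau\in[\lambda,\pi]}f(\tau)$, so that the double sum on the right-hand side of \eqref{eq-form} becomes simply $\sum_{\lambda\in[\sigma,\pi)}\mu(\sigma,\lambda)\,g(\lambda)$. The key observation is that this sum ranges over the half-open interval $[\sigma,\pi)$, whereas the corresponding sum over the closed interval $[\sigma,\pi]$ has exactly one extra term, namely $\lambda=\pi$; and since $\pi$ is the top of the interval, $g(\pi)=f(\pi)=1$ by hypothesis, so that extra term equals $\mu(\sigma,\pi)$. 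Hence the right-hand side of \eqref{eq-form} rewrites as $f(\sigma)+\mu(\sigma,\pi)-\sum_{\lambda\in[\sigma,\pi]}\mu(\sigma,\lambda)\,g(\lambda)$, and it remains to prove the identity $\sum_{\lambda\in[\sigma,\pi]}\mu(\sigma,\lambda)\,g(\lambda)=f(\sigma)$.

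That identity is exactly the conclusion of Fact~\ref{fac-mif} applied to the locally finite poset $[\sigma,\pi]$, whose maximum element is $\pi$, with $f$ and $g$ playing the roles of the two functions appearing there; evaluating that conclusion at $x=\sigma$ gives what we want. For a self-contained derivation one can instead swap the order of summation, writing $\sum_{\lambda\in[\sigma,\pi]}\mu(\sigma,\lambda)\sum_{\tau\in[\lambda,\pi]}f(\tau)=\sum_{\tau\in[\sigma,\pi]}f(\tau)\sum_{\lambda\in[\sigma,\tau]}\mu(\sigma,\lambda)$, and then invoke Fact~\ref{fac-sum}: the inner sum vanishes whenever $\sigma<\tau$ and equals $1$ when $\sigma=\tau$, so only the term $\tau=\sigma$ survives, leaving $f(\sigma)$.

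I do not expect a genuine obstacle here; the proposition contains no hard step, so the only things requiring care are the bookkeeping around the half-open versus closed interval in \eqref{eq-form} and making sure the hypothesis $f(\pi)=1$ is used precisely where $g(\pi)$ is evaluated. As a sanity check, the degenerate case $\sigma=\pi$ is consistent: then $[\sigma,\pi)=\emptyset$, the subtracted sum is empty, and \eqref{eq-form} reduces to $\mu(\pi,\pi)=f(\pi)=1$.
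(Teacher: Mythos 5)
Your proof is correct and follows essentially the same route as the paper: define $g(\lambda)=\sum_{\tau\in[\lambda,\pi]}f(\tau)$, apply the M\"obius inversion formula (Fact~\ref{fac-mif}) on the poset $[\sigma,\pi]$, and split off the $\lambda=\pi$ term using $f(\pi)=1$. Your optional self-contained derivation via interchanging the order of summation and invoking Fact~\ref{fac-sum} is also valid, but it is just an unwinding of the same inversion argument rather than a genuinely different approach.
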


\begin{proof}
Fix $\sigma$, $\pi$ and $f$. For $\lambda\in[\sigma,\pi]$, define
$g(\lambda)=\sum_{\tau\in[\lambda,\pi]} 
f(\tau)$. Using Fact~\ref{fac-mif} for the poset $P=[\sigma,\pi]$, we obtain
\begin{equation}
f(\sigma)=\sum_{\lambda\in[\sigma,\pi]} \mu(\sigma,\lambda)g (\lambda).
\label{eq-f-pomoci-g}
\end{equation}
Substituting the definition of $g(\lambda)$ into the identity~\eqref{eq-f-pomoci-g} and 
using the assumption $f(\pi)=1$, we get
\begin{align*}
f(\sigma)&=\sum_{\lambda\in[\sigma,\pi]}\mu(\sigma,\lambda)
\sum_{\tau\in[\lambda,\pi]} f(\tau)\\
&=\mu(\sigma,\pi) + \sum_{\lambda\in[\sigma,\pi)}\mu(\sigma,\lambda)
 \sum_{\tau\in[\lambda,\pi]} f(\tau), 
\end{align*}
from which the proposition follows.
\end{proof}

In our applications of Proposition~\ref{pro-form}, we shall always use the function $f$ defined as 
$f(\tau)=(-1)^{|\tau|-|\pi|} \E(\tau,\pi)$, where $\pi$ is assumed to be fixed. We state this 
special case of Proposition~\ref{pro-form} as a corollary.

\begin{corollary}\label{cor-rek}
For any two permutations $\sigma$ and $\pi$, we have
\[
 \mu(\sigma,\pi)= (-1)^{|\pi|-|\sigma|}\E(\sigma,\pi) - \sum_{\lambda\in 
[\sigma,\pi)} 
\mu(\sigma,\lambda)\sum_{\tau\in[\lambda,\pi]} 
(-1)^{|\pi|-|\tau|}\E(\tau,\pi).
\]
\end{corollary}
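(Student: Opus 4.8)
The plan is to obtain the corollary as an immediate specialization of Proposition~\ref{pro-form}, applied with the particular weight function $f\colon[\sigma,\pi]\to\bbR$ defined by $f(\tau)=(-1)^{|\tau|-|\pi|}\E(\tau,\pi)$. First I would check that this $f$ is an admissible choice for Proposition~\ref{pro-form}, i.e.\ that it satisfies the hypothesis $f(\pi)=1$. This reduces to the trivial observation that the only embedding of $\pi$ into itself is the identity map on $[|\pi|]$, so $\E(\pi,\pi)=1$, and since the exponent $|\pi|-|\pi|$ is zero, the accompanying sign is $1$; hence $f(\pi)=1$. The function is also clearly well defined on the whole interval $[\sigma,\pi]$, because $\E(\tau,\pi)$ is a (nonnegative integer) quantity defined for every $\tau\le\pi$.

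Next I would substitute this $f$ into the identity~\eqref{eq-form}. The left-hand side $\mu(\sigma,\pi)$ is unchanged; on the right-hand side the term $f(\sigma)$ becomes $(-1)^{|\sigma|-|\pi|}\E(\sigma,\pi)$, and each inner summand $f(\tau)$ becomes $(-1)^{|\tau|-|\pi|}\E(\tau,\pi)$. Since $(-1)^m$ depends only on the parity of $m$, the exponents may be rewritten using $(-1)^{a-b}=(-1)^{b-a}$, turning $(-1)^{|\sigma|-|\pi|}$ into $(-1)^{|\pi|-|\sigma|}$ and $(-1)^{|\tau|-|\pi|}$ into $(-1)^{|\pi|-|\tau|}$. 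After these purely cosmetic sign rewritings, the resulting formula is exactly the statement of the corollary.

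There is essentially no obstacle here: the corollary is a verbatim instance of Proposition~\ref{pro-form}, and the only genuine point to verify is the normalization $f(\pi)=1$, which is the trivial fact $\E(\pi,\pi)=1$. The single subtlety worth stating explicitly is the matching of sign conventions with~\eqref{eq-form}, which is why I would record the parity identity $(-1)^{a-b}=(-1)^{b-a}$ rather than leave it implicit.
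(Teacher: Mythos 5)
Your proposal is correct and follows exactly the paper's route: the corollary is stated there precisely as the specialization of Proposition~\ref{pro-form} to $f(\tau)=(-1)^{|\tau|-|\pi|}\E(\tau,\pi)$, with the only check being $f(\pi)=\E(\pi,\pi)=1$. Your added remark on the parity identity $(-1)^{a-b}=(-1)^{b-a}$ is a harmless explicit version of what the paper leaves implicit.
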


We remark that the formula of Corollary~\ref{cor-rek} has a similar structure to another summation
formula for the M\"obius function derived previously by Smith \cite[Theorem 
19]{Smith_formula}.


\paragraph*{Descents and inverse descents}
The \emph{inverse descent} of a permutation 
$\pi=\pi(1)\pi(2)\dots\pi(m)$ 
is a pair of indices $i,j\in[m]$ such that $i<j$ and $\pi(i)=\pi(j)+1$. 
Let $\ides(\pi)$ be the number of inverse descents of~$\pi$. For 
example, $315264$ has two inverse descents, corresponding to $(i,j)=(1,4)$ 
and $(i,j)=(3, 6)$. Observe that if $\sigma$ is contained in $\pi$, then  
$\ides(\sigma)\le\ides(\pi)$. 

The inverse descent statistic is closely related to the more familiar 
descent statistic, where a \emph{descent} in a permutation $\pi$ is a pair 
of indices $i,j$ such that $\pi(i)>\pi(j)$ and $j=i+1$. The number of descents 
of $\pi$ is denoted by $\des(\pi)$. Note that $\des(\pi)=\ides(\pi^{-1})$.

Suppose that $\pi$ has only one inverse descent, occurring at positions $i<j$ 
with $\pi(i)=\pi(j)+1$. We say that an element $\pi(k)$ is a \emph{top element} 
if $\pi(k)\ge \pi(i)$, and it is a \emph{bottom element} if $\pi(k)\le \pi(j)$. 
We also say that $k$ is a \emph{top position} of $\pi$ if $\pi(k)$ is a top 
element, and bottom positions are defined analogously. Note that each element of 
$\pi$ is either a top element or a bottom element, and that the top elements, as 
well as the bottom elements, form an increasing subsequence of~$\pi$.

By replacing each top element of $\pi$ by the symbol `$\ble{t}$' and each 
bottom element by the symbol `$\ble{b}$', we encode a permutation $\pi$ with 
$\ides(\pi)=1$ into a word $w(\pi)$ over the alphabet $\{\ble{b},\ble{t}\}$. For 
example, for $\pi=31245$ we get $w(\pi)=\ble{tbbtt}$. Note that $w(\pi)$ 
determines $\pi$ uniquely. On the other hand, some words over the alphabet 
$\{\ble{b},\ble{t}\}$ do not correspond to any permutation $\pi$ with 
$\ides(\pi) = 1$; for example, the word $\ble{bbtt}$, and in general, every word where all symbols `$\ble{b}$' appear before all symbols `$\ble{t}$'.

This encoding of permutations into words was introduced by 
Smith~\cite{Smith_descents}, who also generalized it to permutations with $k$ 
inverse descents\footnote{Actually, Smith works with descents rather than 
inverse descents, but since the inverse operation is a poset automorphism of 
$(\cS, \le)$, this change does not affect the relevant results.}, by encoding them into 
words over an alphabet of size~$k+1$. The key feature of Smith's encoding is 
that if $\sigma$ and $\pi$ have the same number of inverse descents, then 
$\sigma\le\pi$ if and only if $w(\sigma)$ is a subword of $w(\pi)$, that is, the 
word $w(\sigma)$ forms a (not necessarily consecutive) subsequence of~$w(\pi)$. 
In other words, if $\ides(\sigma)=\ides(\pi)$ then the interval $[\sigma,\pi]$ 
is isomorphic, as a poset, to the interval $[w(\sigma),w(\pi)]$ in the subword 
order.

To express the M\"obius function in the subword order, Bj\"orner~\cite{BjornerSubword,BjornerFactor} 
has introduced the notion of normal embeddings among words. This notion was adapted by 
Smith~\cite{Smith_descents} to the permutation setting, to express the M\"obius function of 
permutations with a fixed number of descents. We will present Smith's definition of normal 
embeddings below. Let us remark that other authors have used different notions of normal 
embeddings, suitable for other special types of 
permutations~\cite{Zeros,SaganVatter,BJJS,Smith_formula}. 

Let $\pi=\pi(1)\pi(2)\dotsc\pi(n)$ be a permutation. An \emph{adjacency} in $\pi$ is a maximal 
consecutive sequence $\pi(i)\pi(i+1)\dotsc\pi(i+k)$ satisfying $\pi(i+j)=\pi(i)+j$ for each 
$j=1,\dotsc,k$; in other words, it is a maximal sequence of consecutive increasing values at 
consecutive positions in~$\pi$. An embedding $f$ of a permutation $\sigma$ into $\pi$ is 
\emph{normal} if for each adjacency $\pi(i)\pi(i+1)\dotsc\pi(i+k)$ of $\pi$, the positions 
$i+1,\dots,i+k$ all belong to~$\Img(f)$. Let $\sNE(\sigma,\pi)$ be the set of normal embeddings of 
$\sigma$ into~$\pi$, and let $\NE(\sigma,\pi)$ be the cardinality of $\sNE(\sigma,\pi)$. 

As an example, consider the permutations $\sigma=123$ and $\pi=165234$. There are four embeddings of 
$\sigma$ into $\pi$, with images $\{1,4,5\}$, $\{1,4,6\}$, $\{1,5,6\}$ and $\{4,5,6\}$. The 
permutation $\pi$ has one adjacency of length more than $1$, namely the sequence $234$ at positions 
$4$, $5$ and~$6$. Thus, an embedding $f$ into $\pi$ is normal if $\Img(f)$ contains both $5$ 
and~$6$. In particular, $\NE(\sigma,\pi)=2$.

Note that if all the adjacencies in $\pi$ have length $1$, then every embedding of a permutation 
$\sigma$ into $\pi$ is normal.

Using Bj\"orner's formula for the M\"obius function of the subword 
order~\cite{BjornerSubword,BjornerFactor}, Smith~\cite{Smith_descents} obtained 
the following result.

\begin{fact}[Smith \protect{\cite[Proposition 
3.3]{Smith_descents}}]
\label{fac-descents}
If $\sigma$ and $\pi$ satisfy $\des(\sigma)=\des(\pi)$, then 
$\mu(\sigma,\pi)=(-1)^{|\pi|-|\sigma|}\NE(\sigma,\pi)$.
\end{fact}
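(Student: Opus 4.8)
The plan is to reduce the statement to Bj\"orner's formula for the M\"obius function of the subword order, with Smith's word encoding serving as the bridge; this mirrors Smith's original argument~\cite{Smith_descents}. If $\sigma\not\le\pi$ then both sides vanish, so from now on assume $\sigma\le\pi$.

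First I would pass from descents to inverse descents. Since $\des(\pi)=\ides(\pi^{-1})$ and the inverse map is a poset automorphism of $(\cS,\le)$, we have $\mu(\sigma,\pi)=\mu(\sigma^{-1},\pi^{-1})$, and $\des(\sigma)=\des(\pi)$ is equivalent to $\ides(\sigma^{-1})=\ides(\pi^{-1})$. The inverse map, being the reflection of the diagram across the main diagonal, carries an adjacency of $\pi$ (a block of consecutive values at consecutive positions) to an adjacency of $\pi^{-1}$, and it carries the embedding of $\sigma$ into $\pi$ with image $I$ to the embedding of $\sigma^{-1}$ into $\pi^{-1}$ with image $\pi(I)$; one checks directly that one of these two embeddings is normal if and only if the other is, so $\NE(\sigma,\pi)=\NE(\sigma^{-1},\pi^{-1})$. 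Hence it suffices to prove the statement assuming $\ides(\sigma)=\ides(\pi)=k$.

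Next I would invoke Smith's encoding: the map $w$ restricts to a poset isomorphism from $[\sigma,\pi]$ onto the interval $[w(\sigma),w(\pi)]$ in the subword order on words over a $(k+1)$-letter alphabet, and $|w(\tau)|=|\tau|$ for every $\tau$. Therefore $\mu(\sigma,\pi)$ equals the M\"obius function of $[w(\sigma),w(\pi)]$ computed in the subword order, and $|\pi|-|\sigma|=|w(\pi)|-|w(\sigma)|$. Then I would apply Bj\"orner's theorem on the subword order~\cite{BjornerSubword}, which asserts that for words $u\le v$ the M\"obius function of the subword order equals $(-1)^{|v|-|u|}$ times the number of \emph{normal} embeddings of $u$ into $v$; here an embedding of words is normal exactly when every position $p$ of $v$ with $p>1$ and $v_{p-1}=v_p$ lies in the image of the embedding --- equivalently, within each maximal run of equal letters of $v$ only the leftmost position may be omitted.

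The remaining step, which I expect to be the main obstacle, is to match Bj\"orner's notion of normality for $w(\sigma)\le w(\pi)$ with the permutation notion of normality for $\sigma\le\pi$. Under the isomorphism $w$, an embedding of $\sigma$ into $\pi$ with image $I\subseteq[|\pi|]$ corresponds to the embedding of $w(\sigma)$ into $w(\pi)$ whose image, viewed as a set of positions, is again $I$. The structural heart of the matter is the claim that two consecutive positions of $\pi$ receive the same letter in $w(\pi)$ precisely when they carry consecutive values, that is, precisely when they lie in a common adjacency of $\pi$: within an adjacency the values are consecutive and therefore lie in a single letter class, while a consecutive pair of positions whose values straddle two classes would force the two endpoints of some inverse descent into the wrong left-to-right order; conversely, each letter class is a block of consecutive values occurring as an increasing subsequence of $\pi$, so a maximal run of equal letters cannot skip an intermediate value of its class. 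Granting this claim, the positions $p>1$ with $w(\pi)_{p-1}=w(\pi)_p$ are exactly the non-leftmost positions of the adjacencies of $\pi$, so Bj\"orner's condition becomes precisely the condition defining $\sNE(\sigma,\pi)$, namely that for every adjacency of $\pi$ all of its positions except the leftmost lie in $I$. Thus the number of normal word-embeddings of $w(\sigma)$ into $w(\pi)$ equals $\NE(\sigma,\pi)$, and combining this with the two equalities from the previous paragraph yields $\mu(\sigma,\pi)=(-1)^{|\pi|-|\sigma|}\NE(\sigma,\pi)$. The delicate point is verifying the adjacency-versus-run correspondence in full, especially for $k\ge 2$, where one must unfold Smith's $(k+1)$-letter encoding explicitly; everything else is bookkeeping.
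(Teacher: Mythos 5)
The paper does not prove this statement itself: it imports it verbatim as a Fact from Smith, noting only that Smith obtained it from Bj\"orner's formula for the M\"obius function of the subword order. Your reconstruction follows exactly that attributed route --- reduce $\des$ to $\ides$ via the inverse symmetry (checking $\NE(\sigma,\pi)=\NE(\sigma^{-1},\pi^{-1})$, as the paper itself does when deriving Corollary~\ref{cor_descents}), transport the interval through Smith's word encoding, apply Bj\"orner's normal-embedding formula, and verify that equal-letter runs in $w(\pi)$ correspond precisely to adjacencies of $\pi$ --- and the argument, including that last correspondence, is sound.
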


Observing that $\NE(\sigma^{-1},\pi^{-1})=\NE(\sigma,\pi)$, and recalling that 
$\mu(\sigma^{-1},\pi^{-1})=\mu(\sigma,\pi)$ and $\ides(\pi^{-1})=\des(\pi)$, we can rephrase 
Fact~\ref{fac-descents} as follows.

\begin{corollary}
\label{cor_descents} 
If $\sigma$ and $\pi$ satisfy 
$\ides(\sigma)=\ides(\pi)$, then 
\[
\mu(\sigma,\pi)=(-1)^{|\pi|-|\sigma|}\NE(\sigma,\pi).
\]
\end{corollary}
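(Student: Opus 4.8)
The statement in question is Corollary~\ref{cor_descents}, which merely rephrases Fact~\ref{fac-descents} under the inverse operation, so the task is entirely mechanical once the three ingredients listed just before the statement are in hand. The plan is to start from Fact~\ref{fac-descents} applied to the permutations $\sigma^{-1}$ and $\pi^{-1}$: since $\ides(\sigma)=\ides(\pi)$ and $\des(\tau)=\ides(\tau^{-1})$, we have $\des(\sigma^{-1})=\ides(\sigma)=\ides(\pi)=\des(\pi^{-1})$, so the hypothesis of Fact~\ref{fac-descents} is met and we obtain $\mu(\sigma^{-1},\pi^{-1})=(-1)^{|\pi^{-1}|-|\sigma^{-1}|}\NE(\sigma^{-1},\pi^{-1})$.

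Next I would rewrite each of the three factors in this identity in terms of $\sigma$ and $\pi$ directly. For the left-hand side, the inverse operation is a poset automorphism of $(\cS,\le)$ (noted in the ``Permutation symmetries'' paragraph), and Philip Hall's Theorem (Fact~\ref{fac-hall}), or simply the fact that the M\"obius function is determined by the isomorphism type of the interval, gives $\mu(\sigma^{-1},\pi^{-1})=\mu(\sigma,\pi)$. For the sign, clearly $|\sigma^{-1}|=|\sigma|$ and $|\pi^{-1}|=|\pi|$, so the exponent is unchanged. For the right-hand side, one needs the observation $\NE(\sigma^{-1},\pi^{-1})=\NE(\sigma,\pi)$: this is the only point requiring a short argument, and I would justify it by noting that taking inverses transposes the permutation diagram across the main diagonal, so it carries embeddings of $\sigma$ into $\pi$ bijectively to embeddings of $\sigma^{-1}$ into $\pi^{-1}$, and it sends each adjacency $\pi(i)\dots\pi(i+k)$ of $\pi$ (a run of consecutive values at consecutive positions) to an adjacency of $\pi^{-1}$; since normality is the condition that $\Img(f)$ contain all but the first position of every adjacency, and this condition is preserved under the bijection, normal embeddings correspond to normal embeddings. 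Substituting these three equalities into the displayed identity yields exactly $\mu(\sigma,\pi)=(-1)^{|\pi|-|\sigma|}\NE(\sigma,\pi)$, which is the claim.

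There is essentially no obstacle here: the only substantive step is verifying $\NE(\sigma^{-1},\pi^{-1})=\NE(\sigma,\pi)$, and even that is routine once one checks that the diagram transpose maps adjacencies to adjacencies. In fact the text preceding the statement already asserts all three needed facts (``Observing that $\NE(\sigma^{-1},\pi^{-1})=\NE(\sigma,\pi)$, and recalling that $\mu(\sigma^{-1},\pi^{-1})=\mu(\sigma,\pi)$ and $\ides(\pi^{-1})=\des(\pi)$''), so the proof is a one-line substitution and could reasonably be left to the reader; if written out, it amounts to the chain of equalities
\[
\mu(\sigma,\pi)=\mu(\sigma^{-1},\pi^{-1})=(-1)^{|\pi^{-1}|-|\sigma^{-1}|}\NE(\sigma^{-1},\pi^{-1})=(-1)^{|\pi|-|\sigma|}\NE(\sigma,\pi),
\]
with Fact~\ref{fac-descents} justifying the middle equality (its hypothesis holding because $\des(\sigma^{-1})=\ides(\sigma)=\ides(\pi)=\des(\pi^{-1})$) and the inverse-invariance of $\mu$, of $\NE$, and of permutation size justifying the outer two.
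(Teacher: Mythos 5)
Your proof is correct and is essentially the paper's own argument: the paper derives Corollary~\ref{cor_descents} from Fact~\ref{fac-descents} by exactly this substitution of $\sigma^{-1},\pi^{-1}$, citing the inverse-invariance of $\mu$, of $\NE$, and the identity $\ides(\pi^{-1})=\des(\pi)$. Your added check that transposing the diagram carries adjacencies to adjacencies (and hence normal embeddings to normal embeddings) is a correct filling-in of the one observation the paper leaves implicit.
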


Let $\pi\in\cS_{2n}$ be the permutation with one inverse descent and encoding 
$w(\pi)=\ble{tbtbtb\dots tb}$. By Corollary~\ref{cor_descents} and 
Corollary~\ref{cor-dec}, 
\[\mu(1,\pi)=-\mu(21,\pi)=-\NE(21,\pi)=-\binom{n+1}{2}.\] This example, pointed 
out by Smith~\cite{Smith_one}, gave the largest previously known growth of
$|\mu(1,\pi)|$ in terms of $|\pi|$. We note that Brignall and Marchant~\cite{BM} 
have recently found another, substantially different example of a family of 
permutations $\pi$ for which they conjecture that $|\mu(1,\pi)|$ is quadratic 
in~$|\pi|$.

\begin{figure}
  \centerline{\includegraphics[width=0.4\linewidth]{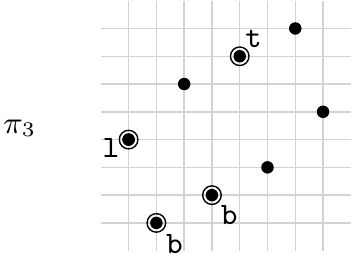}}
  \caption{The permutation $\pi_3$. The circled elements correspond to the 
embedding $f= \ble{lb\bu bt\bu\bu\bu}$ of the permutation $3124$.}
  \label{fig-pi}
\end{figure}

\section{Proof of Theorem~\ref{thm-cross}}\label{sec-big}

In this section, we present a proof of Theorem~\ref{thm-cross}. We 
will assume throughout that $n$ is a fixed integer greater than 1.

Recall that we defined the permutation 
$\pi_n\in\cS_{2n+2}$ by
\[\pi_n=n+1,1,n+3,2,n+4,3,n+5,\dots,n,2n+2,n+2. 
\]
See Figures~\ref{f:pi_345} and~\ref{fig-pi}. Note that by transposing the two values $n+1$ and 
$n+2$ in $\pi_n$, we would 
obtain the permutation $\pi\in\cS_{2n+2}$ with $w(\pi)=\ble{tbtbt\dots tb}$, 
considered by Smith. Note that $\ides(\pi_n)=2$ for any $n\ge 1$, with the two inverse descents 
defined by the pairs of positions $(1,2n)$ and $(3,2n+2)$.

We will refer to the leftmost element of $\pi_n$, that is, the value $n+1$, as the 
\emph{left element}, the rightmost one as the \emph{right element}, the 
elements $1,2,\dots,n$ as the \emph{bottom elements}, and 
$n+3,n+4,\dots,2n+2$ as the \emph{top elements}. If $\pi_n(i)$ is a bottom 
element, we say that $i$ is a \emph{bottom position} of $\pi_n$, and similarly 
for left, right and top positions. There is a close link between the left, right, top and bottom 
elements of $\pi_n$, and the top and bottom elements of any permutation $\sigma\le\pi_n$ with 
$\ides(\sigma)=1$, as we shall see in Lemma~\ref{lem-proper}.

We use the letters $\ble{t,b,l,r}$ to represent the top, bottom, left and right 
elements, respectively. By replacing each element of $\pi_n$ by the 
corresponding letter, we obtain \emph{the encoding} of $\pi_n$; for example, the 
encoding of $\pi_3$ is $\ble{lbtbtbtr}$.

Note that any subword of the encoding of $\pi_n$ uniquely determines the 
subpermutation formed by the corresponding elements of~$\pi_n$. For example, the 
subsequence $\ble{lbbt}$ corresponds to the subpermutation $3124$. However, a 
permutation $\sigma\le\pi_n$ may correspond to several different words: for 
example, $41253$ corresponds to either $\ble{lbbtb}$ or $\ble{tbbtb}$ or 
$\ble{tbbtr}$. In 
particular, an interval $[\sigma,\pi_n]$ will not in general be 
isomorphic to any interval in the subword order, and we cannot use the results
of Bj\"orner and Smith directly to obtain a formula for $\mu(\sigma,\pi_n)$. On 
the positive side, if $[\sigma,\pi_n]$ is not isomorphic to an interval in the 
subword order, then $|\mu(\sigma,\pi_n)|$ can be much larger than 
$\NE(\sigma,\pi_n)$, as witnessed by Theorem~\ref{thm-cross}.

In the rest of this section, we only consider embeddings of permutations into $\pi_n$, unless we explicitly specify otherwise.
For our convenience, we will often represent an embedding $f$ into $\pi_n$ by 
a word, using the letters $\ble{b,t,l,r}$ for the bottom, top, left and right 
positions in $\Img(f)$, and the symbol $\bu$ for positions not in 
$\Img(f)$. For example, $f = \ble{lb\bu b t \bu \bu \bu}$ is an 
embedding of $3124$ into $\pi_3=41627385$ using the first, second, fourth and 
fifth elements (in other words, $\Img(f)=\{1,2,4,5\}$; see Figure~\ref{fig-pi}). 
We call this the {\em hyphen-letter encoding} of an embedding.

\subsection{Outline of the proof of Theorem~\ref{thm-cross}}

Clearly, $\pi_n$ avoids $321$, and hence $\mu(1,\pi_n)=-\mu(21,\pi_n)$ by 
Corollary~\ref{cor-dec}. We therefore focus on finding the value of 
$\mu(21,\pi_n)$. By Corollary~\ref{cor-rek}, $\mu(21,\pi_n)$ equals
\[
(-1)^{|\pi_n|-|21|}\E(21,\pi_n) - \sum_{\lambda\in 
[21,\pi_n)} 
\mu(21,\lambda)\sum_{\tau\in[\lambda,\pi_n]} 
(-1)^{|\pi_n|-|\tau|}\E(\tau,\pi_n),
\]
which, since $\pi_n$ has even size, simplifies to
\begin{equation}\label{eq-mf}
\mu(21,\pi_n)=\E(21,\pi_n) - \sum_{\lambda\in 
[21,\pi_n)} 
\mu(21,\lambda)\sum_{\tau\in[\lambda,\pi_n]} 
(-1)^{|\tau|}\E(\tau,\pi_n).
\end{equation}

We say that a permutation $\lambda\in[21,\pi_n)$ is \emph{vanishing} if 
the expression 
\[\mu(21,\lambda)\sum_{\tau\in[\lambda,\pi_n]} (-1)^{|\tau|}\E(\tau,\pi_n),\] 
which is the outer summand on the right-hand side of~\eqref{eq-mf}, is 
equal to zero. To simplify equation~\eqref{eq-mf}, we will first 
establish several sufficient conditions for $\lambda$ to be vanishing; in 
particular, it will turn out that there are only polynomially many 
non-vanishing $\lambda$, and we can describe their structure explicitly. 

Our next concern will be to express, for a fixed non-vanishing 
$\lambda\in[21,\pi_n)$, the value
\[
S_\lambda=\sum_{\tau\in[\lambda,\pi_n]} (-1)^{|\tau|}\E(\tau,\pi_n),
\]
that is, the value of the inner sum in equation~\eqref{eq-mf}.

Recall that $\sE(\tau,\pi_n)$ is the set of embeddings of
  $\tau$ into $\pi_n$. We further let 
\[
\sEl=\bigcup_{\tau\in[\lambda,\pi_n]} \sE(\tau,\pi_n).
\]

For an embedding $g$, we let $|g|$ denote the size of the 
permutation being embedded; equivalently, $|g|=|\Img(g)|$. With this notation, 
$S_\lambda$ can be written as follows:
\[
S_\lambda = \sum_{g\in\sEl} (-1)^{|g|}.
\]
Call an embedding 
$g$ \emph{odd} if $|g|$ is odd, and even otherwise. To find a formula for 
$S_\lambda$, we will find a partial matching on $\sEl$ between odd and even embeddings, thereby 
cancelling out their contribution to~$S_\lambda$. The unmatched embeddings will have a very specific 
structure, and we will be able to count them exactly.

Let $g$ be an embedding of a permutation $\tau$ into $\pi_n$, and let 
$i\in[2n+2]$ be an index corresponding to a position in~$\pi_n$.
The \emph{$i$-switch} of the embedding~$g$, denoted by $\Delta_i(g)$, is the 
embedding uniquely determined by the following properties:
\begin{align*}
  \Img(\Delta_i(g))&= \Img(g)\cup\{i\} \text{ if } i\not\in\Img(g)\text{, and}\\
  \Img(\Delta_i(g))&= \Img(g)\setminus\{i\} \text{ if } i\in\Img(g).
\end{align*}
Note that for any $i\in[2n+2]$, the $i$-switch is an involution on 
the set of embeddings into $\pi_n$, that is, $\Delta_i(\Delta_i(g))=g$ for 
any~$g$. Note also that $\Delta_i$ is parity-exchanging, that is, it maps even 
embeddings to odd ones and vice versa. Switches will be our main tool to obtain
cancellations between even and odd embeddings contributing to $S_\lambda$ for a 
fixed~$\lambda$. The idea of using switches to get parity-exchanging involutions
on a set of embeddings is quite common in the literature, and can be traced back 
at least to Bj\"orner's work on the subword order~\cite{BjornerFactor}.

\subsection{Vanishing lambdas} 

We will now identify
sufficient conditions for $\lambda$ to be vanishing, that is, for 
$\mu(21,\lambda)S_\lambda$ to be equal to zero.

\begin{lemma}\label{lem-ides}
 Any permutation $\lambda\in[21,\pi_n)$ with $\ides(\lambda)=2$ is vanishing.
\end{lemma}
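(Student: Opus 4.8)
The plan is to show that whenever $\ides(\lambda)=2$, the inner sum $S_\lambda=\sum_{g\in\sEl}(-1)^{|g|}$ already vanishes on its own, so the product $\mu(21,\lambda)S_\lambda$ is zero regardless of the value of $\mu(21,\lambda)$. The key observation is that since $\lambda\le\pi_n$ and $\ides(\pi_n)=2$, any $\tau\in[\lambda,\pi_n]$ also has $\ides(\tau)=2$, and so every embedding $g\in\sEl$ must "use" both inverse descents of $\pi_n$. The two inverse descents of $\pi_n$ are carried by the pairs of positions $(1,2n)$ and $(3,2n+2)$; in particular the left element (position $1$) together with the value just below it, and the right element (position $2n+2$) together with the value just above it, must all lie in $\Img(g)$ for the embedded permutation to retain two inverse descents. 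I will make this precise: an embedding $g$ lies in $\sEl$ exactly when $\lambda\le\tau_g\le\pi_n$ where $\tau_g$ is the pattern of $\Img(g)$, and the condition $\ides(\tau_g)=2$ forces $\Img(g)$ to contain a specific small set of "mandatory" positions of $\pi_n$ (the two positions forming each inverse descent pair, i.e. $\{1,2n,3,2n+2\}$, or at least enough of them to witness both descents).

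Once the mandatory positions are identified, I would pick a position $i$ of $\pi_n$ that is \emph{not} mandatory — for instance a bottom or top position strictly between positions $3$ and $2n$ — and such that switching membership of $i$ in $\Img(g)$ never destroys or creates an inverse descent, never violates $\tau_g\ge\lambda$, and never pushes $\tau_g$ outside $[\lambda,\pi_n]$. The involution $\Delta_i$ from the outline is parity-exchanging, so if $\Delta_i$ restricts to an involution on $\sEl$ we immediately get $S_\lambda=0$. The subtle point is that a single fixed $i$ may not work for all $g\in\sEl$ simultaneously: switching $i$ in or out of $\Img(g)$ could drop $\tau_g$ below $\lambda$ (when $i$ was needed to embed $\lambda$), or could merge/split an adjacency in a way that changes $\ides$. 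So instead I expect to choose, for each $g$, a canonical non-mandatory switchable position $i(g)$ — say the leftmost bottom position of $\pi_n$ that is "free" to toggle given the current $g$ and $\lambda$ — and verify that $i(g)=i(\Delta_{i(g)}(g))$, so that $g\mapsto\Delta_{i(g)}(g)$ is a genuine parity-exchanging involution on $\sEl$. Since $\lambda$ has $\ides(\lambda)=2$ it occupies only a bounded number of positions relevant to the two descents, leaving plenty of bottom (or top) positions of $\pi_n$ that $\lambda$ does not "pin down", which guarantees such an $i(g)$ exists for every $g\in\sEl$.

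The main obstacle will be handling the interaction between the switch and the adjacency/inverse-descent structure: I must confirm that toggling the chosen position $i(g)$ neither creates a new adjacency that collapses the pattern's inverse-descent count, nor removes the witness of one of the two inverse descents, nor drops the pattern below $\lambda$. The cleanest way to organize this is to first catalog, for a permutation $\tau$ with $\ides(\tau)=2$ sitting inside $\pi_n$, which positions of $\pi_n$ are "essential" (their removal drops $\ides$ below $2$ or drops the pattern below $\lambda$) and show this essential set is small and depends only mildly on $g$; then any bottom position outside the essential set and outside $\Img(\lambda)$'s pinned positions is a valid switch. Because both the "two inverse descents" constraint and the "$\ge\lambda$ with $\ides(\lambda)=2$" constraint are monotone and localized, I expect the essential set to have bounded size while $\pi_n$ has $\Theta(n)$ bottom positions, so a canonical choice always exists and the involution is well-defined, yielding $S_\lambda=0$ and hence that $\lambda$ is vanishing.
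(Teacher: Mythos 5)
There is a genuine gap: the sign-reversing involution that your whole argument rests on is never actually constructed. You correctly note the crux yourself — a single fixed switch position $i$ does not work for all $g\in\sEl$, so you propose a $g$-dependent canonical choice $i(g)$ and then say you would ``verify that $i(g)=i(\Delta_{i(g)}(g))$'' — but this verification is exactly where the difficulty lives, and the natural choice you suggest (the leftmost position that is free to toggle) fails in general. Membership of $g$ in $\sEl$ is governed by the single condition $\lambda\le\tau_g$, where $\tau_g$ is the pattern of $\Img(g)$; adding a position is always harmless, but removing one may destroy every embedding of $\lambda$ compatible with $g$. Consequently, after you \emph{add} the leftmost free position $i$ to $\Img(g)$, an earlier position $j<i$ that was not removable from $g$ may become removable from $g\cup\{i\}$ (the new point can substitute for $j$ in an embedding of $\lambda$), so the leftmost free position of $\Delta_i(g)$ can jump to $j<i$ and the map is no longer an involution. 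This instability is precisely the phenomenon the paper later has to fight with the heavy machinery of rightmost/greedy embeddings and regular versus singular embeddings when computing $\Sf$; it cannot be waved away by a counting argument about ``essential'' positions. A secondary inaccuracy: the inverse-descent count does not force $\Img(g)\supseteq\{1,3,2n,2n+2\}$ — what $\ides(\tau_g)=2$ forces is $1,2n+2\in\Img(g)$ together with at least one top and one bottom position — and in any case the real membership constraint is the pattern containment $\lambda\le\tau_g$, not the descent count, so the ``mandatory set'' analysis does not by itself control which switches are legal.

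The paper's proof of this lemma is entirely different and much shorter: since $\ides(\pi_n)=2$, every $\tau\in[\lambda,\pi_n]$ has $\ides(\tau)=2$, so Corollary~\ref{cor_descents} gives $\mu(\tau,\pi_n)=(-1)^{|\tau|}\NE(\tau,\pi_n)$; because $\pi_n$ has no adjacency of length greater than $1$, all embeddings into $\pi_n$ are normal, hence $\NE(\tau,\pi_n)=\E(\tau,\pi_n)$, and therefore
\[
S_\lambda=\sum_{\tau\in[\lambda,\pi_n]}(-1)^{|\tau|}\E(\tau,\pi_n)=\sum_{\tau\in[\lambda,\pi_n]}\mu(\tau,\pi_n)=0
\]
by Fact~\ref{fac-sum}. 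Your plan, if completed, would amount to reproving this special case of the Bj\"orner--Smith normal-embedding formula by a bare-hands involution — possible in principle, but the involution is the entire content, and it is missing. If you want a purely switch-based argument, you would need a stable canonical choice (for instance via a greedy/rightmost-embedding analysis as in the paper's later sections), or else simply invoke Smith's result as the paper does.
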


\begin{proof}
 Fix $\lambda\in[21,\pi_n)$ with $\ides(\lambda)=2$. Since $\ides(\pi_n)=2$, it 
follows that any $\tau\in[\lambda,\pi_n]$ has $\ides(\tau)=2$. In particular, 
for any such $\tau$ we have $(-1)^{|\tau|}\NE(\tau,\pi_n) = \mu(\tau,\pi_n)$ by 
Corollary~\ref{cor_descents}. Since $\pi_n$ has no adjacency of length more than $1$, all the 
embeddings into $\pi_n$ are normal, and in particular $\NE(\tau,\pi_n)=\E(\tau,\pi_n)$.
Therefore,
\begin{align*}
 S_\lambda&=\sum_{\tau\in[\lambda,\pi_n]} (-1)^{|\tau|}\E(\tau,\pi_n)\\
&=\sum_{\tau\in[\lambda,\pi_n]} \mu(\tau,\pi_n)\\
&=0 &\text{ by Fact~\ref{fac-sum}, }
\end{align*}
showing that $\lambda$ is vanishing.
\end{proof}

Note that any $\lambda$ containing 21 has at least one inverse descent. 
Lemma~\ref{lem-ides} therefore implies that any non-vanishing $\lambda$ in 
$[21,\pi_n)$ satisfies $\ides(\lambda)=1$. From now on, we focus on 
permutations $\lambda$ with one inverse descent.

We say that a permutation $\lambda\in[21,\pi_n)$ \emph{omits} a position 
$i\in[2n+2]$ if there is no embedding $f$ of $\lambda$ into $\pi_n$ such that 
$i\in\Img(f)$.

\begin{lemma}\label{lem-omit}
 If a permutation $\lambda\in[21,\pi_n)$ omits a position $i\in[2n+2]$, then 
$\lambda$ is vanishing. 
\end{lemma}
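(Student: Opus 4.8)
The plan is to show that if $\lambda$ omits position $i$, then the $i$-switch $\Delta_i$ is a parity-exchanging involution on the whole set $\sEl$, which forces $S_\lambda=0$ and hence makes $\lambda$ vanishing. The key point to verify is that $\Delta_i$ actually maps $\sEl$ into itself; once we have that, since $\Delta_i$ is an involution that flips parity (as noted in the outline), the terms $(-1)^{|g|}$ cancel in pairs and $S_\lambda=\sum_{g\in\sEl}(-1)^{|g|}=0$.

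So the heart of the argument is closure: for every $g\in\sEl$ we must check $\Delta_i(g)\in\sEl$. Recall $\sEl=\bigcup_{\tau\in[\lambda,\pi_n]}\sE(\tau,\pi_n)$, so $g\in\sEl$ means $g$ is an embedding of some $\tau$ with $\lambda\le\tau\le\pi_n$. First I would observe that $\Delta_i(g)$ is automatically an embedding of \emph{some} permutation $\tau'\le\pi_n$ (adding or deleting a point of $\Img(g)$ yields another subpermutation of $\pi_n$), so the only thing to check is $\lambda\le\tau'$. There are two cases. If $i\notin\Img(g)$, then $\Img(\Delta_i(g))=\Img(g)\cup\{i\}\supseteq\Img(g)$, so $\tau'\ge\tau\ge\lambda$ and we are done. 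The interesting case is $i\in\Img(g)$, where we delete $i$; here I claim the restriction of $g$ to $[2n+2]\setminus\{i\}$ still contains $\lambda$. Indeed, $g$ restricted to $\Img(g)\setminus\{i\}$ is an embedding of $\tau'$ into $\pi_n$ whose image avoids $i$; if this $\tau'$ did \emph{not} contain $\lambda$, we would still need to produce an embedding of $\lambda$ avoiding $i$.

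The cleanest way to make the deletion case work: since $\lambda\le\tau$ and $g$ embeds $\tau$ into $\pi_n$, composing an embedding of $\lambda$ into $\tau$ with $g$ gives an embedding $f$ of $\lambda$ into $\pi_n$ with $\Img(f)\subseteq\Img(g)$. If $i\notin\Img(f)$, then $\Img(f)\subseteq\Img(g)\setminus\{i\}$, so this $f$ witnesses $\lambda\le\tau'$ and we are done. If instead $i\in\Img(f)$, then $f$ is an embedding of $\lambda$ into $\pi_n$ with $i\in\Img(f)$ — contradicting the hypothesis that $\lambda$ omits $i$. So in fact $i\notin\Img(f)$ is forced, and closure holds. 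I expect this to be the main (and essentially only) obstacle, and it is dispatched by exactly this observation: \emph{any} embedding of $\lambda$ obtained by factoring through $g$ must avoid $i$, because $\lambda$ omits $i$.

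Thus the proof structure is: (1) recall $\Delta_i$ is a parity-exchanging involution on all embeddings into $\pi_n$; (2) show $\Delta_i$ restricts to an involution on $\sEl$ via the case analysis above, using the omission hypothesis in the deletion case; (3) conclude $S_\lambda=\sum_{g\in\sEl}(-1)^{|g|}=0$ by pairing each $g$ with $\Delta_i(g)\ne g$; (4) hence $\mu(21,\lambda)S_\lambda=0$, so $\lambda$ is vanishing. No calculation is needed beyond the set-theoretic bookkeeping of images.
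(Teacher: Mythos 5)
Your proposal is correct and follows exactly the paper's approach: the paper's proof is the one-line observation that the $i$-switch $\Delta_i$ is a parity-exchanging involution on $\sEl$, hence $S_\lambda=0$, and your case analysis (adding $i$ only enlarges the image; deleting $i$ is safe because any embedding of $\lambda$ factored through $g$ must avoid $i$ by the omission hypothesis) is precisely the verification the paper leaves to the reader with ``we easily see.''
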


\begin{proof}
 We easily see that the $i$-switch $\Delta_i$ is a parity-exchanging involution 
on the set $\sEl$, and therefore $S_\lambda=0$.
\end{proof}

\begin{corollary}
\label{cor_vanish}
Let $\lambda$ be a permutation of size $m$ with one inverse 
descent. Then $\lambda$ is vanishing whenever it satisfies at least one of the 
following conditions.
\begin{itemize}
\item[a)] The two leftmost elements $\lambda(1)$ and $\lambda(2)$ are both top 
elements.
\item[b)] The leftmost element $\lambda(1)$ is a bottom element, 
and $\lambda$ has at least one other bottom element $\lambda(i)$ with
$1<i<m$.
\item[c)] The two rightmost elements $\lambda(m-1)$ and $\lambda(m)$ are both 
bottom elements.
\item[d)] The rightmost element $\lambda(m)$ is a top element, 
and $\lambda$ has at least one other top element $\lambda(i)$ with 
$1<i<m$.
\item[e)] The permutation $\lambda$ has only one top element and at least 
three bottom elements.
\item[f)] The permutation $\lambda$ has only one bottom element and at least 
three top elements.
\end{itemize}
\end{corollary}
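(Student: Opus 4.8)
The plan is to show that under each of the six conditions the permutation $\lambda$ \emph{omits} some position of $\pi_n$, and then to invoke Lemma~\ref{lem-omit}. First I would cut the work in half: one checks directly from the formula for $\pi_n$ that $\pi_n^{rc}=\pi_n$, and since $rc$ is a poset automorphism of $(\cS,\le)$ preserving $\ides$ that sends the encoding word of a one-inverse-descent permutation to its reverse with the letters $\ble{b}$ and $\ble{t}$ interchanged, it swaps the conditions pairwise, (a)$\leftrightarrow$(c), (b)$\leftrightarrow$(d), (e)$\leftrightarrow$(f); moreover $\lambda$ omits position $i$ of $\pi_n$ iff $\lambda^{rc}$ omits position $2n+3-i$, and $S_\lambda=S_{\lambda^{rc}}$ since $rc$ induces a size- and embedding-count-preserving bijection of $[\lambda,\pi_n]$ onto $[\lambda^{rc},\pi_n]$. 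So it suffices to treat (a), (b), (e). Case (a) is immediate: since $\lambda(1),\lambda(2)$ are top elements, the value $1$ of $\lambda$ (a bottom element) sits at some position $j\ge 3$, so using position $2$ of $\pi_n$ — which carries the global minimum value $1$ — would force $f(1)<\dots<f(j)=2$ with $j\ge 3$, impossible; hence $\lambda$ omits position $2$.

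For (b) I claim $\lambda$ omits position $1$ (the left element, of value $n+1$). If an embedding $f$ uses it then $f(1)=1$, and as $\lambda(1)$ is a bottom element it equals $1$, so $n+1$ is the smallest value taken by $f$ and the remaining positions of $\Img(f)$ lie in $\{3,5,\dots,2n+1\}\cup\{2n+2\}$. If position $2n+2$ were used it would be $f(m)$ and carry the second smallest value $n+2$, forcing $\lambda(m)=2$; then the first two bottom elements of $\lambda$ (in position order) sit at positions $1$ and $m$, so $\lambda$ has no interior bottom element, contradicting~(b). Hence the other positions of $\Img(f)$ lie among the top positions $3,5,\dots,2n+1$, whose $\pi_n$-values increase, so $\lambda(2),\dots,\lambda(m)$ is increasing. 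But $\lambda$ has at least two bottom elements, so neither endpoint of its unique inverse descent is the value $1$, and that inverse descent is therefore an inversion among $\lambda(2),\dots,\lambda(m)$ — a contradiction.

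For (e), write $w(\lambda)=\ble{b}^a\ble{t}\ble{b}^c$ with $c\ge 1$ and $a+c\ge 3$. If $a\ge 2$ I would show $\lambda$ omits position $3$, of value $n+3$: position $3$ cannot host the unique top element of $\lambda$, since that would put the two prefix bottom elements on positions $1,2$ of $\pi_n$, whose values $n+1,1$ decrease while bottom elements must increase; and it cannot host a bottom element of $\lambda$, since such an element would be a prefix bottom carrying value $n+3$, exceeding every value available to the (nonempty) set of suffix bottom elements, which all lie at bottom positions or at the right position. If instead $a\le 1$, so $c\ge 2$, then $\lambda$ omits position $2n+1$, of value $2n+2$: using it forces the unique top element of $\lambda$ onto it, leaving only position $2n+2$ for the $c\ge 2$ suffix bottom elements. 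Either way Lemma~\ref{lem-omit} applies, and the remaining cases (c), (d), (f) follow by the $rc$-symmetry.

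The main obstacle is the position-by-position bookkeeping in (b) and especially (e). It rests on a clear description of which positions of $\pi_n$ an embedding of a one-inverse-descent permutation can occupy — in essence, that bottom elements map to bottom positions (the left and right positions being usable only in tightly constrained roles) while the block of top positions carries the transition at the inverse descent. I would isolate this once as an auxiliary structural lemma about embeddings into $\pi_n$ — the role intended for the forthcoming Lemma~\ref{lem-proper} — after which each of (a)--(f) reduces to the short counting arguments sketched above.
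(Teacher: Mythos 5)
Your proposal is correct and takes essentially the same route as the paper: every case is handled by showing that $\lambda$ omits a position of $\pi_n$ and invoking Lemma~\ref{lem-omit} (case a) via position $2$, case b) via position $1$), with the reverse-complement symmetry $\pi_n^{rc}=\pi_n$ disposing of c), d), f). The only deviation is in case e), where the paper simply notes that e) forces b) or c) to hold, while you give a direct omission argument (position $3$ when there are at least two prefix bottom elements, position $2n+1$ otherwise); your variant checks out, it is just slightly longer than the paper's reduction.
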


\begin{proof}
If Case a) occurs, then $\lambda$ omits the position $i=2$. To see this, note that only $\lambda(1)$ 
or $\lambda(2)$ can be embedded to $\pi_n(2)$; however, since $\lambda(1)$ and $\lambda(2)$ are both 
top elements, $\lambda$ contains a bottom element $\lambda(i)$ that is smaller than both 
$\lambda(1)$ and $\lambda(2)$. Since $\pi_n$ contains no element smaller than $\pi_n(2)$, there is no embedding of $\lambda$ into $\pi_n$ that maps a top element to $\pi_n(2)$. Hence 
$\lambda$ omits the position 2.

If Case b) occurs, we claim that $\lambda$ omits the position~$1$. Indeed, if there were an 
embedding $f$ of $\lambda$ into $\pi_n$ with $1\in\Img(f)$, then necessarily $f$ would map 
$\lambda(1)$ to $\pi_n(1)$. Since $\lambda(1)$ is a bottom element, it is the smallest element of 
$\lambda$, and hence $f$ cannot map any element of $\lambda$ to an element of $\pi_n$ smaller than 
$\pi_n(1)$. In particular, $f$ maps no element of $\lambda$ to a bottom element of $\pi_n$. 
By assumption, $\lambda$ has a bottom element $\lambda(i)$ with $1<i<m$. This element must be mapped 
to a top element of~$\pi_n$. All the elements of $\lambda$ to the right of $\lambda(i)$ are larger 
than $\lambda(i)$, and therefore no element of $\lambda$ can be mapped to the rightmost element 
of~$\pi_n$. It follows that $f$ maps all the elements of lambda to the left and top elements of 
$\pi_n$, showing that $\lambda$ is an increasing permutation, contradicting $\ides(\lambda)=1$.

Cases c) and d) are symmetric to Cases a) and b) via the reverse-com\-ple\-ment symmetry (note 
that $\pi_n^{rc}=\pi_n$, and consequently if $\lambda$ is vanishing then so is $\lambda^{rc}$). If 
Case e) occurs, then at least one of b) and c) occurs as well. Case f) is again symmetric to e).
\end{proof}

We say that a permutation $\lambda$ of size $m$ with one inverse 
descent is a \emph{cup} if $\lambda(1)$ and $\lambda(m)$ are top elements and 
the 
remaining elements are bottom elements; in other words, $\lambda$ is the 
permutation $(m-1),1,2,\dots,(m-2),m$. We say that $\lambda$ is a \emph{cap} 
if $\lambda(1)$ and $\lambda(m)$ are bottom elements and the remaining elements 
are top elements, that is, $\lambda=1,3,4,\dots,m,2$. 

Suppose that $\lambda\le \tau \le \pi_n$, $f$ is an embedding of $\lambda$
and $g$ is an embedding of~$\tau$. We say that $f$ is {\em compatible} 
with $g$ and also that $g$ is {\em compatible} 
with $f$ if $\Img(f)$ is a subset of~$\Img(g)$. 

\begin{lemma}\label{lem-cup}
 If $\lambda$ is a cup or a cap of size $m\ge 3$, then $\lambda$ is vanishing.
\end{lemma}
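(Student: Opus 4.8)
The plan is to show that for a cup or cap $\lambda$ of size $m\ge 3$, the sum $S_\lambda=\sum_{g\in\sEl}(-1)^{|g|}$ vanishes, which by definition of vanishing is enough (we do not even need information about $\mu(21,\lambda)$). By the reverse-complement symmetry $\pi_n^{rc}=\pi_n$, which swaps cups with caps, it suffices to treat the case where $\lambda$ is a cup, i.e.\ $\lambda=(m-1),1,2,\dots,(m-2),m$. I would exhibit a parity-exchanging involution on $\sEl$, built from the switches $\Delta_i$ introduced above but applied at a position depending on the embedding rather than a single fixed position (a single fixed position does not work here, since by Corollary~\ref{cor_vanish} the only surviving $\lambda$ with one inverse descent omit no position).

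The key observation is structural: if $\lambda$ is a cup, then every $\tau\in[\lambda,\pi_n]$ still has $\ides(\tau)=1$, because the only way to create a second inverse descent inside $\pi_n$ is to use both the left element and the right element of $\pi_n$ (together with appropriate bottom/top elements), and I will argue that an embedding of a cup cannot be extended to such a $\tau$ — more precisely, that in any embedding $g$ of such a $\tau$, at least one of $\pi_n(1)$ (the left element) or $\pi_n(2n+2)$ (the right element) is \emph{not} used, or else is used in a way that still leaves $\ides(\tau)=1$. Granting this, every $\tau$ in the interval has the shape of a permutation with one inverse descent embedded in $\pi_n$, and in particular has a well-defined bottom-element structure. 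I would then define the involution as follows: given $g\in\sEl$ embedding some $\tau$, look at whether the position $\pi_n(2n+1)$ (the value $2n+2$, the largest top element, sitting just left of the right element) is in $\Img(g)$ and toggle it — or, if toggling it leaves the interval $[\lambda,\pi_n]$, instead toggle some canonically chosen alternative position. The honest version: I expect the clean statement is that one of a \emph{small set} of positions can always be toggled while staying in $\sEl$, and one picks the toggle by a fixed priority rule; the priority rule makes the map an involution because applying it twice returns to $g$.

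The main obstacle I anticipate is verifying that the chosen switch $\Delta_i(g)$ again lies in $\sEl$ — i.e.\ that adding or removing the position $i$ from $\Img(g)$ yields an embedding of some permutation $\tau'$ that still contains $\lambda$. Removing a position can destroy containment of $\lambda$ (one might delete an element that was serving as one of the two ``tops'' or as part of the bottom run of the cup), so the position $i$ must be chosen among ``redundant'' positions — roughly, positions whose corresponding element of $\pi_n$ is not needed to witness $\lambda\le\tau$. I would argue that since $m\ge 3$ a cup has at least one interior bottom element, hence an embedding of $\tau\supseteq\lambda$ uses at least one bottom position of $\pi_n$, and then show that the collection of usable toggle positions is always nonempty and admits a canonical choice invariant under the toggle. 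Once the involution is set up and checked to be parity-exchanging (each $\Delta_i$ is parity-exchanging, as noted) and fixed-point-free, $S_\lambda=0$ follows immediately, and hence $\lambda$ is vanishing.
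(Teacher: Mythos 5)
Your high-level plan (reduce caps to cups via reverse-complement, then cancel $S_\lambda$ with parity-exchanging switches whose position depends on the embedding) is the same strategy the paper uses, but two things break. First, the structural claim you lean on is false: it is not true that every $\tau\in[\lambda,\pi_n]$ has $\ides(\tau)=1$. The interval is closed at the top, so $\tau=\pi_n$ itself belongs to it and has $\ides(\pi_n)=2$; even small intermediate $\tau$ do, e.g.\ for the cup $\lambda=213$ the permutation $\tau=2143$ (obtained from the left element, a bottom element, a top element and the right element of $\pi_n$) contains $213$, is contained in $\pi_n$, and has two inverse descents. So $\sEl$ certainly contains embeddings of permutations with two inverse descents, and any argument predicated on excluding them collapses; the paper's proof needs no such restriction.

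Second, and more seriously, the involution is never actually constructed: the ``small set of toggle positions with a fixed priority rule'' is exactly the content of the lemma, and it is the part you leave open, having only (correctly) identified the obstacle that removing a position may destroy containment of $\lambda$. The paper resolves this by partitioning $\sEl$ into the set $A$ of embeddings compatible with some \emph{narrow} embedding of the cup (one with $f(m)<2n+2$) and the set $B$ of embeddings compatible only with \emph{broad} embeddings ($f(m)=2n+2$, which forces $f(1)=1$): the switch $\Delta_{2n+2}$ is an involution on $A$, and for $m\ge 4$ one checks that $\Delta_3$ is an involution on $B$ (a broad embedding of a cup never uses position $3$, and toggling position $3$ cannot create compatibility with a narrow embedding when $m\ge 4$). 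The case $m=3$ genuinely fails for this argument and needs a further split of $B$, handled by $\Delta_{2n}$ and $\Delta_3$ (using $n\ge 2$). So your remark that no single fixed position works is only half right: fixed positions do work, but only after partitioning $\sEl$ into blocks, and proving that each block is closed under its own switch is where all the work lies. Until you specify the toggle positions and carry out that closure check (especially for $m=3$), the proposal remains a plan rather than a proof.
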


\begin{proof}
Let $\lambda$ be a cup. Recall that we are considering embeddings into the 
permutation $\pi_n$ with $n>1$, that is, $\pi_n$ has size $2n+2\ge 6$.
We say that an embedding $f$ of $\lambda$ into $\pi_n$ is \emph{broad} if 
$f(m)=2n+2$, and $f$ is \emph{narrow} otherwise. Observe that 
any broad embedding must satisfy~$f(1)=1$. We partition the set $\sEl$ 
into two subsets $A$ and $B$, where $A$ is the set of those embeddings 
$g\in\sEl$ that are compatible with at least one narrow embedding of 
$\lambda$, while $B$ contains those embeddings $g\in\sEl$ that are only 
compatible with broad embeddings of~$\lambda$. Note that $\Delta_{2n+2}$ is an 
involution on the set $A$, showing that $A$ does not contribute to~$S_\lambda$. 

We now deal with the set~$B$. Consider first the situation when $m\ge 4$. We 
claim that in such case $\Delta_3$ is an involution on the set~$B$. To see this, 
observe first that for any $g\in B$ we have $\Delta_3(g)\in\sEl$, since any 
broad embedding of $\lambda$ compatible with $g$ is also compatible 
with~$\Delta_3(g)$. It remains to show that there is no narrow embedding of 
$\lambda$ compatible with $\Delta_3(g)$. Indeed, if $f$ were such a narrow 
embedding, we would necessarily have $f(1)=3$, and therefore $f(m)<2n+2$, since 
$\pi_n(f(1))$ must be smaller than $\pi_n(f(m))$. But then, if we redefine the 
value of $f(1)$  from 3 to 1, we obtain a narrow embedding of $\lambda$ 
compatible with $g$, which is impossible since $g$ is in~$B$. Thus, $B$ does 
not contribute to $S_\lambda$ either, and $\lambda$ is vanishing.

Now suppose $\lambda$ has size 3, that is, $\lambda=213$. Consider an embedding 
$g\in B$. Note that both $1$ and $2n+2$ are in $\Img(g)$, and in the 
hyphen-letter encoding of $g$, all occurrences of the symbol $\ble{b}$ must be 
to the right of any occurrence of $\ble{t}$, otherwise $g$ would be compatible 
with a narrow embedding of~$\lambda$. Let $B'\subseteq B$ be the set of those 
embeddings $g\in B$ whose hyphen-letter encoding has only one symbol $\ble{b}$ 
and this symbol appears at position $2n$, and let $B''$ be the set~$B\setminus 
B'$. We note that $\Delta_{2n}$ is an involution on $B''$, while $\Delta_3$ is 
an involution on~$B'$ (notice that here we require that $n\neq 1$). We conclude 
that any cup $\lambda$ is vanishing.

A cap is the reverse-complement of a cup, and therefore caps are vanishing as 
well, by symmetry.
\end{proof}

So far we have identified several cases when $\lambda$ is vanishing because 
$S_\lambda$ is zero. We now focus on the situations when $\mu(21,\lambda)=0$, 
which also implies that $\lambda$ is vanishing. For a permutation 
$\lambda$ with $\ides(\lambda)=1$, we define a 
\emph{top repetition} to be a pair $(\lambda(i),\lambda(i+1))$ of two 
consecutive top elements in $\lambda$, and similarly a \emph{bottom repetition} 
to be a pair of two consecutive bottom elements.

By Corollary~\ref{cor_descents}, $|\mu(21,\lambda)|=\NE(21,\lambda)$ whenever 
$\ides(\lambda)=1$. Observing that a normal embedding of 21 into $\lambda$ must 
contain the right element of any repetition in its image, we reach the 
following conclusion.

\begin{observation}\label{obs-mu} 
Let $\lambda$ be a permutation with 
$\ides(\lambda)=1$. If $\lambda$ has at least two top repetitions, or at least 
two bottom repetitions, or a top repetition appearing to the right of a bottom 
repetition, then $21$ has no normal embedding into $\lambda$ and consequently 
$\mu(21,\lambda)=0$. In particular, such $\lambda$ is vanishing.
\end{observation}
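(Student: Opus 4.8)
The plan is to reduce the statement to a claim about normal embeddings of $21$ into $\lambda$, invoke Corollary~\ref{cor_descents}, and then analyze the structure of normal embeddings combinatorially. First I would recall that since $\ides(\lambda)=1$, Corollary~\ref{cor_descents} gives $|\mu(21,\lambda)|=\NE(21,\lambda)$, so it suffices to show that under each of the three hypotheses, $\NE(21,\lambda)=0$, i.e. $21$ has no normal embedding into $\lambda$. Once $\mu(21,\lambda)=0$, the outer summand $\mu(21,\lambda)S_\lambda$ in~\eqref{eq-mf} vanishes, so $\lambda$ is vanishing by definition.

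Next I would make precise the key observation stated in the excerpt: if $f$ is any embedding of $21$ into $\lambda$, then $\Img(f)$ consists of one top position and one bottom position of $\lambda$ (since $21\le\lambda$ and $\ides(\lambda)=1$, the descent of $21$ must be realized by a top element above a bottom element). Now consider an adjacency of $\lambda$ that is a \emph{repetition} — a maximal run of consecutive increasing values at consecutive positions consisting entirely of, say, two or more top elements, or two or more bottom elements. By the definition of a normal embedding, for each adjacency $\lambda(i)\lambda(i+1)\dots\lambda(i+k)$ of $\lambda$, all positions $i+1,\dots,i+k$ lie in $\Img(f)$. In particular, if $(\lambda(i),\lambda(i+1))$ is a top repetition, any normal embedding of $21$ into $\lambda$ must contain the position $i+1$ (the right element of that repetition) in its image; the same holds for a bottom repetition. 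Thus a normal embedding of $21$ is forced to use the right endpoint of every repetition of $\lambda$.

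With this forcing in hand, each of the three cases is immediate. If $\lambda$ has two top repetitions, the normal embedding of $21$ would be forced to contain two distinct top positions (the right endpoints of the two repetitions), but an embedding of $21$ has image of size $2$ with exactly one top position and one bottom position — contradiction. The case of two bottom repetitions is symmetric. If $\lambda$ has a top repetition whose right endpoint is at position $j$ and a bottom repetition whose right endpoint is at position $i$ with $i<j$, then any normal embedding of $21$ must contain both $i$ and $j$ in its image; but $\lambda(i)$ is a bottom element and $\lambda(j)$ is a top element, and $i<j$ means $\lambda(i)<\lambda(j)$ appear in increasing order at increasing positions, so the pair $(i,j)$ forms an occurrence of $12$, not $21$ — so no embedding of $21$ can have exactly this image, again a contradiction (and these are the only two positions available). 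In each case $\NE(21,\lambda)=0$, hence $\mu(21,\lambda)=0$ and $\lambda$ is vanishing.

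The only real subtlety — and the step I would be most careful about — is the very first reduction: one must confirm that in an embedding of $21$ into a permutation $\lambda$ with $\ides(\lambda)=1$, the image necessarily consists of exactly one top and one bottom position, never two tops or two bottoms. This follows because the top elements of $\lambda$ form an increasing subsequence and so do the bottom elements (as noted in the discussion of the $\ble{t}/\ble{b}$ encoding in Section~\ref{s:prelim}), so any inversion in $\lambda$ — in particular the inversion realizing the pattern $21$ — must pair a top element with a smaller bottom element appearing later. Everything else is a direct unwinding of the definitions of adjacency, repetition, and normal embedding, with no computation required.
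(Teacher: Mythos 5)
Your proposal is correct and follows essentially the same route as the paper, which likewise combines Corollary~\ref{cor_descents} (reducing the claim to $\NE(21,\lambda)=0$) with the observation that a normal embedding of $21$ must contain the right element of every repetition in its image. You simply spell out the details the paper leaves implicit, namely that an embedding of $21$ into a permutation with one inverse descent uses exactly one top and one bottom position (top before bottom) and that repetitions sit inside adjacencies, both of which check out.
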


\subsection{Proper lambdas} 

We will say that a permutation $\lambda\in[21,\pi_n)$ of size $m$
is \emph{proper} if it satisfies the following three conditions:
\label{def-proper}
\begin{enumerate}
\item[1)] $\ides(\lambda)=1$,
\item[2)] $\lambda(1)$ and $\lambda(m-1)$ are top elements, while $\lambda(2)$ and 
$\lambda(m)$ are bottom elements (for $m=2$ the elements of each pair coincide and $\lambda=21$),
\item[3)] $\lambda$ has at most one top repetition and at most one bottom 
repetition; moreover, if it has both a top repetition and a bottom repetition, 
then the top repetition is to the left of the bottom repetition.
\end{enumerate}
Condition 1) and Corollary~\ref{cor_descents} imply the following identity.

\begin{corollary}
\label{cor_proper_lambda}
For every proper permutation $\lambda$, we have 
\[
\mu(21,\lambda)=(-1)^{|\lambda|}\NE(21,\lambda).
\]
\end{corollary}

Let~$\cP_n\subseteq[21,\pi_n)$ be the set of proper permutations, and let 
$\cP_{n,m}$ be the set of proper permutations of size~$m$. By Lemma~\ref{lem-ides}, Corollary~\ref{cor_vanish}, Lemma~\ref{lem-cup} and Observation~\ref{obs-mu}, any non-vanishing permutation $\lambda\in[21,\pi_n)$ is 
proper. In particular, we may simplify identity~\eqref{eq-mf} as follows:
\begin{equation}\label{eq-proper}
 \mu(21,\pi_n)=\E(21,\pi_n) - \sum_{\lambda\in 
\cP_n} 
\mu(21,\lambda)S_\lambda.
\end{equation}

Note that $21$ is the smallest proper permutation, and that there are no proper 
permutations of size~3. For future reference, we state several easy facts about 
embeddings of proper permutations.

\begin{lemma}\label{lem-properrc}
If $\lambda$ is a proper permutation, then its reverse-complement $\lambda^{rc}$ is proper as well.
Moreover, we have $\mu(21,\lambda)=\mu(21,\lambda^{rc})$ and $S_\lambda=S_{\lambda^{rc}}$.
\end{lemma}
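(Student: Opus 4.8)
The plan is to deduce the whole statement from two facts already implicit in the excerpt: the reverse-complement operation is a poset automorphism of $(\cS,\le)$, and it fixes both $21$ and $\pi_n$ (recall $\pi_n^{rc}=\pi_n$, as noted in the proof of Corollary~\ref{cor_vanish}, and clearly $21^{rc}=21$). Consequently $rc$ maps the interval $[\lambda,\pi_n]$ bijectively and order-isomorphically onto $[\lambda^{rc},\pi_n^{rc}]=[\lambda^{rc},\pi_n]$; in particular $\lambda^{rc}\in[21,\pi_n)$, and $[21,\lambda]$ is isomorphic to $[21,\lambda^{rc}]$, so $\mu(21,\lambda)=\mu(21,\lambda^{rc})$ by Fact~\ref{fac-hall} (the set $\chain{x}{y}$ with the lengths of its chains depends only on the isomorphism type of $[x,y]$). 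For the equality $S_\lambda=S_{\lambda^{rc}}$ I would use that $rc$ acts on the positions of $\pi_n$ by $i\mapsto 2n+3-i$, and hence induces a bijection on the set of all embeddings into $\pi_n$, sending an embedding $g$ of a permutation $\tau$ to an embedding of $\tau^{rc}$ with image $\{2n+3-i;\ i\in\Img(g)\}$. This bijection preserves $|g|$ and maps $\sEl$ onto $\sE_{\lambda^{rc}}(*,\pi_n)$, whence $S_\lambda=\sum_{g\in\sEl}(-1)^{|g|}=\sum_{g\in\sE_{\lambda^{rc}}(*,\pi_n)}(-1)^{|g|}=S_{\lambda^{rc}}$.

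It then remains to verify that $\lambda^{rc}$ is proper. Writing $m=|\lambda|$, the operation $rc$ sends $\lambda$ to the permutation with $\lambda^{rc}(k)=m+1-\lambda(m+1-k)$, that is, it rotates the diagram by $180^{\circ}$, simultaneously reversing the left-to-right order and exchanging large values with small ones. I would first record the precise consequences of this for a permutation with one inverse descent: $rc$ preserves the property $\ides=1$, it carries the inverse descent at positions $(i,j)$ of $\lambda$ to the inverse descent at positions $(m+1-j,\,m+1-i)$ of $\lambda^{rc}$, and a position $k$ is a top position of $\lambda^{rc}$ if and only if $m+1-k$ is a bottom position of $\lambda$ (and symmetrically, the bottom positions of $\lambda^{rc}$ correspond to the top positions of $\lambda$).

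Given this dictionary, the three conditions defining properness transfer mechanically. Condition 1) is exactly the preservation of $\ides=1$. Condition 2) is self-dual under ``reverse positions and swap top with bottom'': $\lambda^{rc}(1)$ and $\lambda^{rc}(m-1)$ are top elements precisely because $\lambda(m)$ and $\lambda(2)$ are bottom elements, and $\lambda^{rc}(2)$ and $\lambda^{rc}(m)$ are bottom elements precisely because $\lambda(m-1)$ and $\lambda(1)$ are top elements. For condition 3), a top repetition of $\lambda$ at consecutive positions $(i,i+1)$ corresponds to a bottom repetition of $\lambda^{rc}$ at consecutive positions $(m-i,\,m+1-i)$, and a bottom repetition of $\lambda$ likewise corresponds to a top repetition of $\lambda^{rc}$; hence $\lambda^{rc}$ again has at most one repetition of each type, and since position-reversal reverses the left-to-right order, a top repetition lying to the left of a bottom repetition in $\lambda$ becomes a top repetition lying to the left of a bottom repetition in $\lambda^{rc}$.

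This is a routine bookkeeping argument, so I do not expect a genuine obstacle. The only point that warrants care is establishing the top/bottom correspondence under $rc$ cleanly --- in particular noticing that neither the reverse nor the complement alone preserves $\ides=1$, and that it is only their composition that does so --- after which conditions 2) and 3) are manifestly invariant.
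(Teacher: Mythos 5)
Your proposal is correct and follows essentially the same route as the paper: exploit that reverse-complement is a poset automorphism fixing $21$ and $\pi_n$, deduce $\mu(21,\lambda)=\mu(21,\lambda^{rc})$ from the isomorphism $[21,\lambda]\cong[21,\lambda^{rc}]$, and get $S_\lambda=S_{\lambda^{rc}}$ from the induced size-preserving bijection on embeddings (the paper phrases this via $\E(\tau,\pi_n)=\E(\tau^{rc},\pi_n)$ and $\tau\in[\lambda,\pi_n]\Leftrightarrow\tau^{rc}\in[\lambda^{rc},\pi_n]$, which is the same argument at a coarser level). Your detailed top/bottom bookkeeping for properness is exactly what the paper dismisses as following ``directly from the definition,'' so the only difference is the level of detail.
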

\begin{proof}
The fact that $\lambda^{rc}$ is proper follows directly from the definition of proper permutation.
The identity $21^{rc}=21$ and the fact that the reverse-complement operation is 
an automorphism of the permutation poset imply that the 
intervals $[21,\lambda]$ and $[21,\lambda^{rc}]$ are isomorphic as posets, and hence 
$\mu(21,\lambda)=\mu(21,\lambda^{rc})$. It remains to prove that $S_\lambda=S_{\lambda^{rc}}$. Recall 
that $\pi^{rc}_n=\pi_n$. Thus, for any permutation $\tau$ we have 
$\E(\tau,\pi_n)=\E(\tau^{rc},\pi_n)$. Moreover, $\tau$ belongs to 
$[\lambda,\pi_n]$ if and only if $\tau^{rc}$ belongs to $[\lambda^{rc},\pi_n]$. Together, this gives
\[
S_\lambda = \sum_{\tau\in[\lambda,\pi_n]} (-1)^{|\tau|} \E(\tau,\pi_n)= 
\sum_{\tau^{rc}\in[\lambda^{rc},\pi_n]} (-1)^{|\tau^{rc}|} 
\E(\tau^{rc},\pi_n)=S_{\lambda^{rc}},
\]
as claimed.
\end{proof}

\begin{lemma}\label{lem-proper}
Let $\lambda$ be a proper permutation of size~$m$, and let $f\colon[m]\to[2n+2]$ 
be a function. Then $f$ is an embedding of $\lambda$ into $\pi_n$ if and only if 
it satisfies the following three conditions:
\begin{enumerate}
\item[{\rm 1)}] The function $f$ is strictly increasing, that is, $f(i)<f(i+1)$ for every 
$i\in[m-1]$.
\item[{\rm 2)}] If $i\in[m]$ is a top position of $\lambda$, then $f(i)$ is a 
left or top position of~$\pi_n$, and if $i$ is a bottom position of $\lambda$ 
then $f(i)$ is a bottom or right position of~$\pi_n$.
\item[{\rm 3)}] At most one of the two values $1$ and $2n+2$ is in~$\Img(f)$.
\end{enumerate}
\end{lemma}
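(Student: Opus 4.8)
The plan is to prove both directions by unwinding the definitions of "proper permutation'' and "embedding into $\pi_n$'', using the fact that $\pi_n$ has a very rigid structure: read left to right, its encoding is $\ble{l}$, then an alternating block $\ble{btbt\dots tb}$, then $\ble{r}$, with the bottom elements $1,\dots,n$ strictly increasing and the top elements $n+3,\dots,2n+2$ strictly increasing, the left element $n+1$ lying between them, and the right element $n+2$ lying just above the left element. First I would handle the easy ("only if'') direction: if $f$ is an embedding, then by definition $f$ is strictly increasing, which is condition 1). For condition 3), observe that $\pi_n(1)=n+1$ and $\pi_n(2n+2)=n+2$ form an inversion (in fact $\pi_n(1)=\pi_n(2n+2)-1$), so if both $1$ and $2n+2$ were in $\Img(f)$, the embedded permutation $\lambda$ would have $\lambda(1)>\lambda(m)$; but condition 2) of "proper'' says $\lambda(1)$ is a top element and $\lambda(m)$ is a bottom element, hence $\lambda(1)>\lambda(m)$ is fine in that respect — so I should instead argue that the left element $n+1$ is \emph{larger} than every bottom element and the right element $n+2$ is \emph{smaller} than every top element, and check that using both $\pi_n(1)$ and $\pi_n(2n+2)$ forces a $321$ pattern or two inverse descents in $\lambda$; since $\ides(\lambda)=1$, this is impossible, giving 3). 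For condition 2), I would use the image-determines-pattern observation (stated in the Embeddings paragraph) together with the encoding correspondence discussed right before Lemma~\ref{lem-proper}: a proper $\lambda$ has $\ides(\lambda)=1$ with $\lambda(1),\lambda(m-1)$ its top elements and $\lambda(2),\lambda(m)$ its bottom elements; a top position of $\lambda$ must map to a position of $\pi_n$ whose value exceeds all bottom-element images, and the only positions of $\pi_n$ that can serve as "top'' relative to the chosen bottom positions are the left position and the top positions (the right position $n+2$ is too small to sit above any top element), and symmetrically for bottom positions.

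For the harder ("if'') direction, I would take a function $f\colon[m]\to[2n+2]$ satisfying 1)--3) and show that $\pi_n(f(1)),\dots,\pi_n(f(m))$ is order-isomorphic to $\lambda$. The idea is: by 3), $\Img(f)$ misses at least one of the two "ambiguous'' positions $1$ and $2n+2$, so the encoding of $\pi_n$ restricted to $\Img(f)$ is a subword of either $\ble{l b t b t \dots t b}$ or $\ble{b t b t \dots t b r}$ (or of the common middle part). In either case, after replacing $\ble{l}$ by $\ble{t}$ and $\ble{r}$ by $\ble{b}$ as appropriate, condition 2) says this subword equals the $\ble{t}/\ble{b}$-word $w(\lambda)$ of $\lambda$ (which is well-defined since $\ides(\lambda)=1$). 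Now I invoke the key property of Smith's encoding: if $\ides(\sigma)=\ides(\pi)=1$ then $\sigma\le\pi$ witnessed by a given increasing injection exactly when the corresponding subword matches. Concretely, since within a block with no "$\ble{b}$ before $\ble{t}$'' pattern the relative order of the selected elements of $\pi_n$ is completely determined by their $\ble{t}/\ble{b}$ types (all $\ble{t}$'s increasing, all $\ble{b}$'s increasing, every $\ble{t}$ above every $\ble{b}$ that it precedes, etc.), the pattern of $\pi_n(f(1)),\dots,\pi_n(f(m))$ depends only on the word, and that word is $w(\lambda)$, hence the pattern is $\lambda$. One subtlety: I must check the interaction of the $\ble{l}$ and $\ble{r}$ elements with the block — that $n+1$ really is above every bottom element and below every top element, and $n+2$ likewise above $n+1$ and below every top element — so that relabelling $\ble{l}\mapsto\ble{t}$ and $\ble{r}\mapsto\ble{b}$ is order-faithful on $\Img(f)$; this is a direct computation from the definition of $\pi_n$.

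The main obstacle, and where I would be most careful, is condition 3) and the case split it induces: the subtle point is that the left element and the right element of $\pi_n$ behave like a top and a bottom element respectively \emph{individually}, but the pair $\{1,2n+2\}$ together creates the configuration $n+1$ (position $1$) $\ \dots\ $ $n+2$ (position $2n+2$) that is an ascent of a special kind — $\pi_n(1)$ and $\pi_n(2n+2)$ are consecutive in value — which, combined with any bottom element strictly between them in position, yields a permutation with two inverse descents, and hence cannot embed a $\lambda$ with $\ides(\lambda)=1$. I would isolate this as the crux: show that if $\{1,2n+2\}\subseteq\Img(f)$ then the selected subpermutation contains the pattern formed by $n+1$, some bottom value, $n+2$ (i.e. a $2{-}1{-}3$-type occurrence with the $1$ and $3$ consecutive in value), which forces $\ides\ge 2$. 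Granting this, the rest is bookkeeping with the alternating structure of $\pi_n$ and Smith's subword correspondence, which I would present briefly rather than in full detail. Finally, I would remark that both conditions 1) and 2) are visibly necessary and sufficient once 3) has pinned down which "copy'' of the block we live in, so the three conditions together are equivalent to being an embedding.
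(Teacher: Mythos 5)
Your handling of condition 3) contains a genuine error, and it is exactly the point where the paper's proof is a one-liner. If both $1$ and $2n+2$ lie in $\Img(f)$, then, $f$ being increasing, $f(1)=1$ and $f(m)=2n+2$; since $\pi_n(1)=n+1<n+2=\pi_n(2n+2)$, order-isomorphism forces $\lambda(1)<\lambda(m)$. This already contradicts properness, because condition 2) of the definition of a proper permutation makes $\lambda(1)$ a top element and $\lambda(m)$ a bottom element, so $\lambda(1)>\lambda(m)$. You state the forced inequality backwards (``the embedded permutation $\lambda$ would have $\lambda(1)>\lambda(m)$'' --- note that $(\pi_n(1),\pi_n(2n+2))$ is an ascent with consecutive values, not an inversion), conclude there is no contradiction, and then replace the correct comparison by the claim that using both endpoints ``forces a $321$ pattern or two inverse descents in $\lambda$''. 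That claim is false: $\lambda=213$ avoids $321$, has $\ides(213)=1$, and embeds into $\pi_n$ with image $\{1,2,2n+2\}$; more generally, a $2$-$1$-$3$ occurrence in which the two larger values are consecutive does not force a second inverse descent ($213$ itself is the counterexample). So $\ides(\lambda)=1$ alone cannot give condition 3); one must use the properness requirement that $\lambda(1)$ is a top and $\lambda(m)$ a bottom element --- which is precisely the simple argument you set aside.

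The remainder of the proposal is workable, if heavier than necessary. For condition 2), the clean reason (the one the paper uses) is that every top element of a proper $\lambda$ has a smaller bottom element to its right (namely $\lambda(m)$), so its image must be a position of $\pi_n$ having a smaller element to its right, i.e.\ a left or top position; symmetrically for bottom elements, using $\lambda(1)$. Your ``if'' direction via the $\ble{t}/\ble{b}$-word correspondence is sound once 3) is available: with at most one of the positions $1$ and $2n+2$ selected, every selected left/top value exceeds every selected bottom/right value, and the selected left/top values, as well as the selected bottom/right values, increase from left to right, so the pattern of the selected entries is determined by the type word and equals $\lambda$; without 3) the relabelling is not order-faithful, precisely because $n+1<n+2$. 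The paper treats this converse as an easy observation, so your extra machinery is not wrong, only unnecessary --- but the proof of condition 3) must be repaired as above.
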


\begin{proof}
Suppose $f$ is an embedding of~$\lambda$. Then $f$ is strictly increasing by 
definition. Moreover, each top element of $\lambda$ has a smaller bottom element 
to its right, and therefore it has to be mapped to an element of $\pi_n$ that 
has a smaller element of~$\pi_n$ to its right; that is, it has to be mapped to a left or top 
element. Symmetrically, each bottom element of~$\lambda$ must be mapped to a bottom or right element 
of~$\pi_n$. Finally, to see that $\Img(f)$ cannot contain both $1$ and 
$2n+2$, recall that for any $\lambda\in\cP_{n,m}$ we have $\lambda(1)>\lambda(m)$.
Therefore, every embedding of $\lambda$ satisfies the three properties of the 
lemma. Conversely, it is easy to observe that any function satisfying the three 
properties is an embedding of~$\lambda$.
\end{proof}

As a direct consequence of Lemma~\ref{lem-proper}, we get the following result.

\begin{corollary}\label{cor-max}
Let $f$ and $f'$ be two embeddings of a proper permutation $\lambda$ 
into~$\pi_n$. Define a function $f^*$ by $f^*(i)=\max\{f(i),f'(i)\}$. Then 
$f^*$ is also an embedding of~$\lambda$.
\end{corollary}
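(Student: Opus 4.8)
The plan is to verify that the function $f^*$ defined by $f^*(i) = \max\{f(i), f'(i)\}$ satisfies the three conditions of Lemma~\ref{lem-proper}, since by that lemma these conditions characterize embeddings of the proper permutation $\lambda$ into $\pi_n$. There are three conditions to check, and each should follow quickly from the corresponding condition for $f$ and $f'$ individually.

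First I would check condition 1), that $f^*$ is strictly increasing. For any $i\in[m-1]$ we have $f^*(i) = \max\{f(i),f'(i)\}$ and $f^*(i+1) = \max\{f(i+1),f'(i+1)\}$; since $f(i) < f(i+1)$ and $f'(i) < f'(i+1)$, both $f(i)$ and $f'(i)$ are strictly less than $\max\{f(i+1),f'(i+1)\} = f^*(i+1)$, so $f^*(i) < f^*(i+1)$. Next, for condition 2), I would use the fact that the left and top positions of $\pi_n$ form an \emph{upward-closed} set of indices in $\pi_n$ — more precisely, the set of left and top positions is $\{1,3,5,\dots\}\cup\{2n\}$ wait, let me instead argue directly: if $i$ is a top position of $\lambda$, then by Lemma~\ref{lem-proper} applied to $f$ and to $f'$, both $f(i)$ and $f'(i)$ are left or top positions of $\pi_n$; I need to know that the maximum of two left-or-top positions is again a left-or-top position. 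This is where I would appeal to the structure of $\pi_n$: the left and top positions are exactly positions $1$ and the odd positions $3,5,\dots,2n+1$, together with wait — actually the left element is at position $1$ and the top elements sit at positions $3,5,7,\dots,2n+1$, while the right element is at position $2n+2$ and the bottom elements at positions $2,4,\dots,2n$. So the left-or-top positions are $\{1\}\cup\{3,5,\dots,2n+1\}$ and the bottom-or-right positions are $\{2,4,\dots,2n\}\cup\{2n+2\}$. Neither of these is upward closed, so a naive max argument fails, and this is the main obstacle: I must use the increasing property together with condition 2) for the \emph{neighbors} of $i$, or argue via the combinatorial meaning rather than position parity.

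The cleaner route, which I would actually follow, is to recall that Corollary~\ref{cor-max} is stated as a \emph{direct consequence} of Lemma~\ref{lem-proper}, so the intended proof is purely via the three conditions, and condition 2) should be re-read in its semantic form: a top element of $\lambda$ must be mapped to an element of $\pi_n$ that has a strictly smaller element of $\pi_n$ somewhere to its right (equivalently, it is not mapped to a bottom or right element). So suppose $i$ is a top position of $\lambda$ and, without loss of generality, $f^*(i) = f(i)$ (the other case is symmetric). Since $f$ is an embedding, $\pi_n(f(i))$ is a left or top element. That is already a property of the position $f^*(i) = f(i)$ alone, so condition 2) holds for $f^*$ at $i$; symmetrically if $i$ is a bottom position and $f^*(i) = f(i)$ then $\pi_n(f(i))$ is a bottom or right element. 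The only subtlety is that $f^*(i)$ always equals either $f(i)$ or $f'(i)$, so whichever it is, condition 2) is inherited from that embedding — no max-of-two-positions argument is needed at all. Condition 3) is similar: $\Img(f^*) \subseteq \Img(f)\cup\Img(f')$, but that containment is too weak; instead observe that $1\in\Img(f^*)$ would force $f^*(1)=1$, hence $f(1)=f'(1)=1$; and $2n+2\in\Img(f^*)$ forces $f^*(m)=2n+2$, i.e. $\max\{f(m),f'(m)\}=2n+2$, so $2n+2\in\Img(f)$ or $\in\Img(f')$. If both $1$ and $2n+2$ were in $\Img(f^*)$, then $f(1)=1$ (so $1\in\Img(f)$) and, say, $2n+2\in\Img(f)$, contradicting condition 3) for $f$ itself — and symmetrically if $2n+2\in\Img(f')$ then combined with $1\in\Img(f')$ (since $f'(1)=f^*(1)=1$) we contradict condition 3) for $f'$.

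Putting these together, $f^*$ satisfies conditions 1), 2) and 3) of Lemma~\ref{lem-proper}, hence $f^*$ is an embedding of $\lambda$ into $\pi_n$, which is the claim. I expect the write-up to be short; the only place demanding care is making sure the "without loss of generality $f^*(i)=f(i)$" step is stated cleanly, since different coordinates $i$ may attain their maximum from different embeddings, but that causes no problem because each condition is checked coordinate-by-coordinate (condition 1 compares coordinates $i$ and $i+1$, but there both $f$ and $f'$ are already increasing, so the argument given above goes through regardless of which embedding realizes each max).
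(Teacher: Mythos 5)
Your proof is correct and follows the paper's intended route: the paper states this corollary as a direct consequence of Lemma~\ref{lem-proper}, and you verify exactly its three conditions, with the key observation that $f^*(i)\in\{f(i),f'(i)\}$ makes condition 2) immediate coordinate-by-coordinate and that $1\in\Img(f^*)$ forces $f(1)=f'(1)=1$, so condition 3) is inherited from whichever embedding contains $2n+2$. The initial worry about upward-closedness is a false start that you correctly discard, and the final write-up needs no further repair.
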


We call the function $f^*$ defined in Corollary~\ref{cor-max} the \emph{pointwise maximum} of $f$ 
and $f'$.
We remark that Corollary~\ref{cor-max} does not generalize to improper 
permutations. Consider for instance $\lambda=3124$, which is a cup and 
therefore not proper, being embedded into $\pi_3=41627385$. 
Take the two embeddings $f=\ble{l \bu \bu b \bu b 
\bu r}$, with $\Img(f)=\{1,4,6,8\}$, and $f'=\ble{\bu \bu t b \bu b t 
\bu}$, with $\Img(f')=\{3,4,6,7\}$. Their pointwise maximum $f^*=\ble{\bu 
\bu t b \bu b \bu r}$ is not an embedding of $\lambda=3124$ but of 
$4123$.

\subsection{\texorpdfstring{Determining $S_\lambda$ for a proper $\lambda$}{Determining S\_lambda for a proper lambda}}

Fix a proper permutation~$\lambda$. Our goal now is to determine 
the value $S_\lambda=\sum_{g\in\sEl} (-1)^{|g|}$. To this end, we will 
describe cancellations between odd and even embeddings in $\sEl$, so that the 
value of $S_\lambda$ can be determined by a small and well-structured subset of 
uncancelled embeddings.

Let $<_L$ denote the \emph{lexicographic order} on the set 
$\sE(\lambda,\pi_n)$, which is a total order defined as follows. Let $f$ and 
$f'$ be two embeddings of $\lambda$ into $\pi_n$, and let $i$ be the smallest 
index for which $f(i)\neq f'(i)$. If $f'(i)< f(i)$, then put $f'<_{L} f$.
If $g\in\sEl$ is an embedding, then $<_L$ can be restricted to a total order on 
the set of embeddings of $\lambda$ that are compatible with~$g$. The maximum
element in this ordered set is called the {\em rightmost embedding of $\lambda$ 
compatible with~$g$}, or just the \emph{rightmost embedding of $\lambda$ 
in~$g$}. Corollary~\ref{cor-max} implies the following fact.

\begin{corollary}\label{cor_pointwise}
The rightmost embedding of $\lambda$ in $g$ is 
the pointwise maximum of all the embeddings of $\lambda$ compatible with~$g$.
\end{corollary}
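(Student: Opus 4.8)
The plan is to derive this directly from Corollary~\ref{cor-max}. Fix $g\in\sEl$, say $g$ is an embedding of a permutation $\tau\in[\lambda,\pi_n]$, and let $f_1,\dots,f_k$ enumerate all embeddings of $\lambda$ into $\pi_n$ that are compatible with~$g$. First I would note that this set is finite and nonempty: finiteness is clear, and nonemptiness holds because any embedding $h$ of $\lambda$ into $\tau$ composes with $g$ to give an embedding $g\circ h$ of $\lambda$ into $\pi_n$ with $\Img(g\circ h)\subseteq\Img(g)$, hence compatible with~$g$. Consequently the lexicographic maximum of $\{f_1,\dots,f_k\}$ exists, and this is by definition the rightmost embedding of $\lambda$ in~$g$.

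Next I would show that the pointwise maximum $f^*$, defined by $f^*(i)=\max\{f_1(i),\dots,f_k(i)\}$, belongs to the list $f_1,\dots,f_k$. Corollary~\ref{cor-max} says the pointwise maximum of two embeddings of $\lambda$ is again an embedding of $\lambda$; iterating this binary operation over $j=1,\dots,k$ shows that $f^*$ is an embedding of $\lambda$ into~$\pi_n$. Moreover $f^*$ is compatible with~$g$: each value $f^*(i)$ equals $f_j(i)$ for some~$j$, so $\Img(f^*)\subseteq\bigcup_j\Img(f_j)\subseteq\Img(g)$. Hence $f^*$ is one of the $f_j$.

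Finally I would identify $f^*$ with the lexicographic maximum. Since $f^*(i)\ge f_j(i)$ for every index~$i$ and every~$j$, any $f_j\neq f^*$ must satisfy $f_j(i)<f^*(i)$ at the first index $i$ where the two differ, and therefore $f_j<_L f^*$ by the definition of $<_L$. Thus $f^*$ is the $<_L$-largest compatible embedding, which is exactly the rightmost embedding of $\lambda$ in~$g$, as claimed. There is essentially no obstacle here; the only point needing a moment's care is that Corollary~\ref{cor-max} is phrased for two embeddings, so one should remark that the pointwise maximum of several embeddings is obtained by iterating the binary operation and remains in the compatible set at each step.
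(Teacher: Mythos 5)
Your argument is correct and matches the paper's intent: the paper deduces Corollary~\ref{cor_pointwise} directly from Corollary~\ref{cor-max}, exactly as you do, and your filled-in details (iterating the binary pointwise maximum, compatibility via $\Img(f^*)\subseteq\Img(g)$, and the identification of the pointwise maximum with the $<_L$-maximum) are all sound. No issues.
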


The notion of rightmost embedding will serve us to establish cancellations between odd and even 
embeddings in~$\sEl$. We remark that a similar approach has been used by 
Bj\"orner~\cite{BjornerFactor} in the subword poset (who uses the term \emph{final embedding} for 
rightmost embedding) as well as by Sagan and Vatter~\cite{SaganVatter}.

We now show that rightmost embeddings can be constructed by a natural greedy right-to-left 
procedure. (Alternatively, they could also be characterized as `locally rightmost', in the sense 
that no element can be shifted to the right alone.) Let $\lambda$ be a proper permutation of size 
$m$, and let $g\in\sEl$ be an embedding. We say that an embedding $f$ of $\lambda$ is \emph{greedy 
in $g$} if $f$ is constructed by the following rules: 
\begin{itemize}
 \item $f(m)$ is equal to the largest (that is, rightmost) bottom or right 
position in~$\Img(g)$, 
\item for each top position $i\in[m-1]$ of $\lambda$, assuming $f(i+1)$ has 
already been defined, $f(i)$ is equal to the largest left or top position 
$j\in\Img(g)$ such that $j<f(i+1)$, and similarly,
\item for each bottom position $i\in[m-1]$ of $\lambda$, assuming $f(i+1)$ has 
already been defined, $f(i)$ is equal to the largest bottom position 
$j\in\Img(g)$ such that $j<f(i+1)$.
\end{itemize}

We say that an embedding $f$ of $\lambda$ is \emph{almost greedy in~$g$}
if $\Img(g)$ contains the rightmost position $2n+2$, and $f$ is greedy 
in the embedding $g^-$ defined by $\Img(g^-)=\Img(g)\setminus\{2n+2\}$.

\begin{lemma}\label{lem-greedy}
For any proper permutation $\lambda$ and any $g\in\sEl$, the rightmost 
embedding of $\lambda$ in $g$ is greedy or almost greedy in~$g$. 
Moreover, if the rightmost embedding is almost greedy, then every embedding 
$f'$ of $\lambda$ into $g$ satisfies $1\in\Img(f')$ and therefore 
$2n+2\not\in\Img(f')$, and there is no greedy embedding of $\lambda$ in~$g$.
\end{lemma}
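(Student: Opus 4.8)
The plan is to argue by analyzing how the pointwise maximum of all embeddings of $\lambda$ compatible with $g$ (which, by Corollary~\ref{cor_pointwise}, is the rightmost embedding $f$ of $\lambda$ in $g$) is positioned relative to the rightmost available positions of $\Img(g)$. First I would set up notation: write $\Img(g)$ as a fixed set of positions, and let $j_1 < j_2 < \dots$ enumerate its elements; recall from Lemma~\ref{lem-proper} that embeddings of the proper permutation $\lambda$ into $\pi_n$ are exactly the strictly increasing maps that send top positions of $\lambda$ to left/top positions of $\pi_n$, bottom positions to bottom/right positions, and use at most one of $1$ and $2n+2$. The key point is that the greedy right-to-left procedure always produces a valid embedding provided it does not run out of room, and if it does produce one, that embedding is $\le_L$-maximal among all embeddings compatible with $g$, so it equals the rightmost embedding $f$. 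So the first step is to show: if the greedy construction in $g$ succeeds in defining $f(m), f(m-1), \dots, f(1)$ without getting stuck, then the result is indeed an embedding of $\lambda$ (here we need to check condition 3 of Lemma~\ref{lem-proper}, i.e. that greedy does not select both $1$ and $2n+2$) and it is the rightmost embedding; this follows because greedy is pointwise $\ge$ any embedding compatible with $g$, by a backward induction on the position index $i$ from $m$ down to $1$, using that each greedy choice takes the largest admissible position below the already-fixed $f(i+1)$.

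The second step handles the case where greedy gets stuck. I claim greedy can only fail in one specific way: when it is forced to use position $2n+2$ for $f(m)$, but position $1$ is also needed lower down. Concretely, since $\lambda(1) > \lambda(m)$, the first position of $\lambda$ is a top position and the last is a bottom position; the bottom/right positions of $\pi_n$ available in $\Img(g)$ are consumed from the right by $f(m)$ and the bottom positions of $\lambda$. If $\Img(g)$ does not contain $2n+2$, I would argue greedy never gets stuck — there are always enough left/top and bottom positions to the left, because any embedding $f'$ compatible with $g$ exists and greedy pointwise dominates it, so greedy stays weakly to the right of $f'$ at every step and in particular never needs a position to the left of $f'(1) \ge 1$. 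Hence when $2n+2 \notin \Img(g)$, greedy succeeds and the rightmost embedding is greedy. If $2n+2 \in \Img(g)$, then either some embedding $f'$ of $\lambda$ in $g$ avoids $2n+2$ — in which case $f'$ is also an embedding in $g^-$, greedy in $g^-$ succeeds, and one checks that greedy in $g$ also succeeds and coincides with it (adding back $2n+2$ to $\Img(g^-)$ cannot help greedy, since greedy would only pick $2n+2$ for $f(m)$, and if it does, then $f$ uses $2n+2$, forcing $1 \notin \Img(f)$; comparing with the $2n+2$-avoiding embedding $f'$ gives a contradiction with pointwise maximality unless in fact greedy in $g$ equals greedy in $g^-$) — or every embedding of $\lambda$ in $g$ uses $2n+2$, hence by condition 3 of Lemma~\ref{lem-proper} every such embedding avoids $1$; in this last case greedy in $g$ is stuck (it would be forced to put $f(m) = 2n+2$ and then cascade down, but the only way an embedding uses $2n+2$ is with $\lambda(m)$, forcing $f(m)=2n+2$, and then the remaining top positions including $\lambda(1)$ must fit into left/top positions of $\Img(g) \setminus \{2n+2\}$ to the left — but wait, that is exactly $g^-$), so the rightmost embedding is greedy in $g^-$, i.e. almost greedy in $g$, and moreover there is no greedy embedding in $g$.

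Wait — I need to be more careful: the statement to prove says that if the rightmost embedding is almost greedy then every embedding $f'$ of $\lambda$ in $g$ satisfies $1 \in \Img(f')$. Let me re-plan this step. The cleanest organization: define "greedy succeeds in $g$" to mean the right-to-left procedure terminates with a valid embedding. Step A: if greedy succeeds in $g$, its output is the rightmost embedding in $g$ (backward induction, pointwise domination). Step B: if some embedding of $\lambda$ in $g$ avoids $2n+2$, then greedy succeeds in $g$ — because greedy pointwise dominates that embedding, so it never runs below position $1$, and at the last step it has a valid bottom/right position $\le$ the largest element of $\Img(g)$ which is $< 2n+2$ or, if the largest element is $2n+2$ but greedy would select it, then greedy's output uses $2n+2$, is $\ge$ the $2n+2$-avoiding embedding pointwise, still valid. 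Step C: if every embedding of $\lambda$ in $g$ uses $2n+2$ (so $2n+2 \in \Img(g)$, and by Lemma~\ref{lem-proper}(3) every embedding avoids $1$, giving the "$1 \in \Img(f')$" claim vacuously — no wait, it says $1 \in \Img(f')$, the opposite!).

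I must recheck the statement. It reads: "if the rightmost embedding is almost greedy, then every embedding $f'$ of $\lambda$ into $g$ satisfies $1\in\Img(f')$ and therefore $2n+2\not\in\Img(f')$". So almost-greedy forces every embedding to use position $1$. Let me reconcile: $f$ almost greedy means $2n+2 \in \Img(g)$ and $f$ is greedy in $g^- = \Img(g)\setminus\{2n+2\}$. Since $f$ is then an embedding not using $2n+2$, by Step B greedy succeeds in $g$; but also $f$ is the rightmost embedding in $g$ by hypothesis. If greedy in $g$ produced a different (larger) embedding $\hat f$, that contradicts $f$ being rightmost, so greedy in $g$ must also output $f$ — but then $f$ is greedy in $g$, so why call it "almost greedy"? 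The resolution must be that the Lemma intends these as the two mutually-exclusive-ish conclusions, and the "moreover" clause characterizes exactly when we are in the genuinely-almost-greedy-but-not-greedy situation. So I'd restructure: prove rightmost $=$ greedy-in-$g$ OR greedy-in-$g^-$; and when greedy-in-$g$ fails (no greedy embedding exists), we must have $2n+2 \in \Img(g)$, every embedding uses... hmm, I think the correct dichotomy is: greedy-in-$g$ fails exactly when greedy would be forced to pick $1$ for $f(1)$ AND $2n+2$ for $f(m)$ simultaneously, which happens iff every embedding of $\lambda$ in $g$ uses position $1$ (is "left-anchored") — and then dropping $2n+2$ from consideration, greedy-in-$g^-$ succeeds and is rightmost, and since all embeddings use $1$ they all avoid $2n+2$. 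I expect the main obstacle to be pinning down precisely this failure mode of the greedy procedure — showing "greedy gets stuck" $\iff$ "every embedding of $\lambda$ in $g$ contains position $1$" — and verifying via Lemma~\ref{lem-proper} that in the stuck case $g^-$ behaves well and no ambiguity arises. The backward-induction pointwise-domination argument (Step A) is routine; the care is all in the case analysis around the endpoints $1$ and $2n+2$ and the fact $\lambda(1) > \lambda(m)$.
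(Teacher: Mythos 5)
Your overall strategy (analyze the right-to-left greedy procedure and show its output pointwise dominates every embedding compatible with $g$) can be made to work, but as written the proposal contains a false step and leaves the actual crux unproven. The dichotomy of your second paragraph is wrong: you claim that if $2n+2\in\Img(g)$ and some embedding of $\lambda$ in $g$ avoids $2n+2$, then the greedy procedure in $g$ succeeds and coincides with the greedy embedding in $g^-$. This fails already for $\lambda=21$ and $g\in\sEl$ with $\Img(g)=\{1,2,2n+2\}$ (the corresponding pattern is $213\ge 21$, so $g\in\sEl$): the unique embedding of $21$ compatible with $g$ has image $\{1,2\}$ and avoids $2n+2$, yet the greedy procedure in $g$ selects $2n+2$ for the bottom element and then $1$ for the top element, violating condition 3) of Lemma~\ref{lem-proper}; hence there is no greedy embedding of $\lambda$ in $g$ at all, and the rightmost embedding is only almost greedy. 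The complementary branch you propose, ``every embedding of $\lambda$ in $g$ uses $2n+2$,'' is likewise not the relevant case, as you yourself noticed when it collided with the conclusion $1\in\Img(f')$. So splitting by whether embeddings use $2n+2$ is the wrong split.

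Your re-planned final paragraph has the right shape but stops exactly at the decisive point: the equivalence ``greedy gets stuck $\iff$ every embedding of $\lambda$ in $g$ contains position $1$'' is announced as the ``main obstacle'' rather than proved, and as a biconditional it is false anyway (for $\lambda=21$ and $\Img(g)=\{1,2\}$ the unique embedding contains $1$, yet greedy succeeds; the backward direction also needs $2n+2\in\Img(g)$). What is actually needed is only the forward direction together with the identification of the stuck case: the greedy choices pointwise dominate every embedding compatible with $g$, so the procedure can only fail by taking $2n+2$ at the last step (which happens exactly when $2n+2\in\Img(g)$) and reaching $1$ at the first step, and in that event domination forces $f'(1)=1$ for every compatible $f'$, whence $2n+2\notin\Img(f')$ by Lemma~\ref{lem-proper} and the rightmost embedding is the greedy embedding in $g^-$; one must also observe that a greedy and an almost greedy rightmost embedding cannot coincide, so the ``moreover'' hypothesis really puts you in the stuck case. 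None of this is carried out in the proposal. For comparison, the paper sidesteps the analysis of the greedy procedure's failure entirely: it starts from the rightmost embedding $f$ and shows by a local exchange (replacing a single value $f(i)$ by a larger admissible position of $\Img(g)$) that $f$ obeys the greedy rule at every index, the only possible exception being at index $m$ with $f(1)=1$ and prescribed value $2n+2$, which is exactly the almost-greedy case; the ``moreover'' clause then follows from the pointwise maximum of $f$ with any other compatible embedding (Corollary~\ref{cor-max}).
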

\begin{proof}
Let $m$ be the size of $\lambda$, and let $f$ be the rightmost embedding of 
$\lambda$ in~$g$. Suppose that $f$ is not greedy in $g$, and let 
$i$ be the largest index for which $f(i)$ differs from the value prescribed by 
the definition of the greedy embedding.

First consider the case $i<m$. Since $f(i)$ differs from its 
greedy value, there must be a position $j\in\Img(g)$ such that $f(i)<j<f(i+1)$, 
and either both $j$ and $f(i)$ are bottom positions, or $j$ is a top position 
and $f(i)$ is a top or left position. In any case, we can define a new 
embedding $f^+$ by
\[
 f^+(x)=
\begin{cases}
  f(x) \text{ for } x\neq i,\\
  j \text{ for } x=i.
\end{cases}
\]
By Lemma~\ref{lem-proper}, $f^+$ is an embedding of $\lambda$, and it is clearly 
compatible with~$g$. However, we have $f<_L f^+$, contradicting the choice 
of~$f$.

Suppose now that $i=m$, that is,  
the rightmost bottom or right position $j\in\Img(g)$ is greater 
than $f(m)$. By defining $f^+$ as in the previous paragraph, we again get 
contradiction, except for the case when $f(1)=1$ and $j=2n+2$. In such 
situation $f$ is almost greedy. Furthermore, since $f(1)=1$, there 
can be no embedding $f'$ of $\lambda$ into $g$ with $f'(1)>1$, because the 
pointwise maximum of $f$ and $f'$ would then contradict the choice of~$f$. 
Since any embedding $f'$ of $\lambda$ compatible with $g$ satisfies $f'(1)=1$, 
we see that $2n+2\not\in\Img(f')$ by Lemma~\ref{lem-proper}, and therefore $f'$ 
is not greedy.
\end{proof}

For $f\in\sE(\lambda,\pi_n)$, let $\sEf$ be the set of all the embeddings 
$g\in\sEl$ such that $f$ is the rightmost embedding of $\lambda$ in~$g$. Let 
\[
\Sf=\sum_{g\in\sEf} (-1)^{|g|}. 
\]
In particular, we have 
\begin{align*}
  \sEl &=\bigcup_{f\in\sE(\lambda,\pi_n)} \sEf, \text{ and}\\
S_\lambda&=\sum_{f\in \sE(\lambda,\pi_n)} \Sf.
\end{align*}

\paragraph*{Proper pairs} 
We will now show that $\Sf=0$ except when $f$ has 
a specific form.

For $f\in\sE(\lambda,\pi_n)$, a \emph{gap} in $f$ is an open interval 
$(f(i),f(i+1))$ of integers such that $f(i+1)>f(i)+1$. We say that the gap has 
\emph{type $\ble{tb}$}, or that it is a $\ble{tb}$-gap, if $\lambda(i)$ is a 
top element and $\lambda(i+1)$ a bottom element; types $\ble{bt}$, $\ble{bb}$ 
and $\ble{tt}$ are defined analogously. For instance, the embedding 
$f=\ble{lbt\bu\bu b\bu btb\bu\bu}$ has two gaps, namely the 
$\ble{tb}$-gap $(f(3),f(4))=\{4,5\}$ and the $\ble{bb}$-gap $(f(4),f(5))=\{7\}$.

Note that a top repetition or a bottom repetition in $\lambda$ will necessarily 
form a gap of type $\ble{tt}$ or $\ble{bb}$, respectively, in any embedding 
of~$\lambda$ in $\pi_n$.

\begin{lemma}\label{lem-sf}
Let $\lambda$ be a proper permutation and let $f$ be an embedding of $\lambda$ 
into~$\pi_n$ with $\Sf\neq0$. Then $f$ satisfies the following three conditions:
\begin{enumerate}
\item[{\rm 1)}] Position $1$ is in $\Img(f)$.
\item[{\rm 2)}] If $f$ has a $\ble{tb}$-gap $(f(i), f(i+1))$, then $i\ge 4$, 
and $\lambda(i-1)$ and $\lambda(i-2)$ are both bottom elements.
\item[{\rm 3)}] If $f$ has a $\ble{bt}$-gap $(f(i), f(i+1))$, then $i\ge 4$ and 
either $\lambda(i-1)$ and $\lambda(i-2)$ are both top elements, or 
$\lambda(i-1)$ is a bottom element and $\lambda(i-2)$ and $\lambda(i-3)$ 
are both top elements.
\end{enumerate}
\end{lemma}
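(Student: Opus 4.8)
The plan is to prove the three conditions by exhibiting, in each failure case, a parity-exchanging involution on $\sEf$, which forces $\Sf=0$. In every case the involution will be a switch $\Delta_i$ for a carefully chosen position $i$, and the entire burden of the argument is to show that $\Delta_i$ maps $\sEf$ to itself — that is, that toggling $i$ in $g$ does not change the rightmost embedding of $\lambda$ in $g$. Throughout, I would work with the greedy/almost-greedy characterization of the rightmost embedding from Lemma~\ref{lem-greedy}, and with Lemma~\ref{lem-proper} to check that candidate maps are honest embeddings.

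\textbf{Condition 1).} Suppose $1\notin\Img(f)$; I claim $\Delta_1$ is an involution on $\sEf$. First, since $f$ is the rightmost embedding of $\lambda$ in some $g\in\sEf$ and $f$ does not use position $1$, no embedding of $\lambda$ compatible with $g$ uses position $1$ either (the pointwise maximum with $f$, via Corollary~\ref{cor-max} and Corollary~\ref{cor_pointwise}, would still use $1$ only if $f$ does). Hence for $g\in\sEf$, the rightmost embedding of $\lambda$ is unaffected by whether $1\in\Img(g)$: adding or removing position $1$ from $\Img(g)$ neither creates a new compatible embedding of $\lambda$ nor changes the greedy construction, since the greedy rules read off positions from the right and the leftmost element $\lambda(1)$ is a top element that already lands at $f(1)>1$. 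The one subtlety is ruling out that $\Delta_1(g)$ is no longer in $\sEl$, i.e.\ that removing $1$ destroys \emph{all} embeddings of $\lambda$; but again, $f$ itself survives. So $\Delta_1$ restricts to a parity-exchanging involution on $\sEf$, giving $\Sf=0$.

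\textbf{Conditions 2) and 3).} Here I would argue contrapositively: assume $f$ has a $\ble{tb}$-gap (resp.\ $\ble{bt}$-gap) $(f(i),f(i+1))$ violating the stated pattern of $\lambda(i-1),\lambda(i-2)$ (and, for 3), $\lambda(i-3)$). The idea is to pick a position $i_0$ strictly inside the gap and show $\Delta_{i_0}$ is an involution on $\sEf$ — the point being that, because of the type of the gap and the local structure of $\lambda$ just before position $i$, inserting $i_0$ into $\Img(g)$ cannot be "grabbed" by the greedy right-to-left construction of the rightmost embedding, so the rightmost embedding of $\lambda$ stays equal to $f$. Concretely, for a $\ble{tb}$-gap: $\lambda(i)$ is a top element mapped to $f(i)$ and $\lambda(i+1)$ a bottom element mapped to $f(i+1)$; a position $i_0$ with $f(i) < i_0 < f(i+1)$ that is, say, a bottom position could in principle be claimed by $\lambda(i+1)$'s greedy value, which is exactly why we must look further left — if $\lambda(i-1)$ is already a bottom element we can instead toggle the relevant position so that the greedy chain from $\lambda(m)$ back through $\lambda(i+1)$ is unchanged. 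The case analysis splits on the colors of $\lambda(i-1), \lambda(i-2)$ (and $\lambda(i-3)$), and uses the properness of $\lambda$ (only one top repetition, only one bottom repetition, top repetition left of bottom repetition) to show that the excluded patterns are precisely those where no safe toggle position exists, while in every \emph{other} configuration some position in or adjacent to the gap can be toggled without affecting which embedding is rightmost. I expect this local-greedy bookkeeping — verifying that the chosen $\Delta_{i_0}$ genuinely preserves $\sEf$ in each subcase, and that $i\ge 4$ is needed so there is room for the elements $\lambda(i-1),\lambda(i-2),(\lambda(i-3))$ to the left — to be the main obstacle, and the part requiring the most careful case enumeration.
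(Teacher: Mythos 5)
Your overall strategy is the same as the paper's: in each failure case exhibit a parity-exchanging switch $\Delta_{i_0}$ that maps $\sEf$ to itself, forcing $\Sf=0$. For Condition 1) your argument is essentially right (the pointwise-maximum observation does the work), although the intermediate claim that adding position $1$ to $\Img(g)$ ``neither creates a new compatible embedding of $\lambda$'' is false as stated: it can create embeddings that map $\lambda(1)$ to position $1$; the correct point, which your parenthetical almost says and which the paper uses, is that any such new embedding is $<_L f$, so $f$ remains rightmost.

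For Conditions 2) and 3), however, what you have is a plan, not a proof, and the missing part is exactly the mathematical core. You assert that when the stated pattern on $\lambda(i-1),\lambda(i-2)$ (and $\lambda(i-3)$) fails, some position inside the gap can be toggled ``without affecting which embedding is rightmost,'' and you defer the verification to ``local-greedy bookkeeping.'' But toggling the natural candidate position $j+1$ (with $j=f(i)$) \emph{can} change the rightmost embedding whenever $\lambda$ has a bottom repetition (for a $\ble{tb}$-gap) or top repetition (for a $\ble{bt}$-gap) somewhere to the left of $\lambda(i)$: the new rightmost embedding $f'$ can absorb $j+1$ and $k=f(i+1)$ as the image of that repetition. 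The actual content of the lemma is that this can only happen when the repetition sits immediately before position $i$ (giving $i\ge4$ and the stated colours), and ruling out a more distant repetition is not a routine check: one must take the hypothetical $f'$ (rightmost in $\Delta_{j+1}(g)$, satisfying $f<_L f'$ and $f\le f'$ pointwise), and splice it with $f$ — e.g.\ $f^+(x)=f(x)$ for $x<i$ and $f^+(x)=f'(x-2)$ for $x\ge i$ in the $\ble{tb}$ case — then verify via Lemma~\ref{lem-proper} that $f^+$ is an embedding of $\lambda$, that it avoids $j+1$ and hence is compatible with the original $g$, and that $f<_L f^+$, contradicting $g\in\sEf$. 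This splicing construction (and its more delicate analogue for the $\ble{bt}$-gap, where the alternation of $\lambda$ to the left of the repetition is used) is precisely where the conditions ``$i\ge4$'' and the exact colour pattern of $\lambda(i-1),\lambda(i-2),\lambda(i-3)$ come from; without it your case analysis has no mechanism to exclude the configurations the lemma forbids, so the proof of Parts 2) and 3) is incomplete.
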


\begin{proof}
Let $m$ be the size of $\lambda$.
To prove Part 1), we claim that if $1\not\in\Img(f)$, then $\Delta_1$ is an 
involution on $\sEf$, and hence $\Sf=0$. To see this, choose $g\in\sEf$ and define 
$g'=\Delta_1(g)$. Clearly $g'$ is compatible with $f$, so to prove that $g'$ is in $\sEf$, we only 
need to argue that $f$ is the rightmost embedding of $\lambda$ compatible with~$g'$. To see this, 
choose an embedding $f'\in\sE(\lambda,\pi_n)$ compatible with $g$, and note that if
$1\in\Img(f')$ then $f'<_L f$, and if $1\not\in\Img(f')$ then $f'$ is also compatible 
with $g$ and hence $f'\le_L f$.

Now we prove Part 2). Let $(f(i),f(i+1))$ be a $\ble{tb}$-gap, and 
let $j=f(i)$ and $k=f(i+1)$. We will show that if $\lambda(i-1)$ and $\lambda(i-2)$ are not both 
bottom elements, then $\Delta_{j+1}$ is an involution on~$\sEf$ and hence~$\Sf=0$. To 
see this, suppose that $\Delta_{j+1}$ is not such an involution, that is, there is 
an embedding $g\in\sEf$ such that $\Delta_{j+1}(g)$ is not in~$\sEf$. Let $g'=\Delta_{j+1}(g)$. 
Since $j+1$ is not in $\Img(f)$, $f$ is compatible with~$g'$. As 
$g'\not\in\sEf$, $g'$ is compatible with an embedding of $\lambda$ greater 
than~$f$ in the $<_L$-order. In particular, $\Img(g')=\Img(g)\cup\{j+1\}$. Let 
$f'$ be the rightmost embedding of $\lambda$ 
in~$g'$. We have $f<_L f'$, and also $f(\ell)\le f'(\ell)$ for every $\ell\in[m]$ by 
Corollary~\ref{cor_pointwise}. 

\begin{table}
\begin{tabular}{lcccc}
 & $f$ & $g$ & $f'$ & $g'$ \\
After the definition of $f'$: & $\ble{t\bu\bu\bu\bu b}$ & $\ble{t\bu\ques\ques\ques b}$ & $\ble{\ques\ques\ques\ques\ques\ques}$  & $\ble{tb\ques\ques\ques b}$ \\
No $\ble{t}$ in the gap in $g'$: & $\ble{t\bu\bu\bu\bu b}$ &  $\ble{t\bu\bu\ques\bu b}$ & $\ble{\ques\ques\bu\ques\bu\ques}$ & $\ble{tb\bu\ques\bu b}$ \\
$\ble{b}$ on position $j+1$ in $f'$: & $\ble{t\bu\bu\bu\bu b}$ & $\ble{t\bu\bu\ques\bu b}$ & $\ble{\ques b\bu\ques\bu\ques}$ & $\ble{tb\bu\ques\bu b}$ \\
$\ble{b}$ on position $k$ in $f'$: & $\ble{t\bu\bu\bu\bu b}$ & $\ble{t\bu\bu\ques\bu b}$ & $\ble{\ques b\bu\ques\bu b}$ & $\ble{tb\bu\ques\bu b}$ \\
at most one $\ble{bb}$-gap in $f'$: & $\ble{t\bu\bu\bu\bu b}$ & $\ble{t\bu\bu\bu\bu b}$ & $\ble{\ques b\bu\bu\bu b}$ & $\ble{tb\bu\bu\bu b}$ \\
\end{tabular}
\caption{Evolution of the conditions on $f$, $g$, $f'$ and $g'$ inside a $\ble{tb}$ gap of $f$. (For sake of example, $k = j + 5$.)}
\label{t:evolution}
\end{table}

We observe that $\Img(g)$ contains no top position $j'$ in the gap 
$(j,k)$, otherwise $f$ would would not be rightmost in $g$, since it could be modified to map $i$ to $j'$ instead of~$j$. 
Therefore $\Img(g')$ contains no such top position either. Follow Table~\ref{t:evolution} for steps in this paragraph.
Since $f'$ is compatible with $g'$ but not with $g$, $\Img(f')$ contains $j+1$. Also, $\Img(f')$ contains~$k$, 
otherwise we could shift the rightmost $\ble{b}$ in $f'$ inside the $\ble{tb}$-gap to the right, contradicting the choice of $f'$.
Since $j+1$ and $k$ are both bottom positions of $\pi_n$, and $\Img(f')$ 
has no top position in the gap, we conclude that $\lambda$ has a bottom 
repetition mapped to positions $j+1$ and $k$ by~$f'$. This bottom 
repetition must appear to the left of the element $\lambda(i)$, because $f<_L f'$ and since $j+1$ 
is not in $\Img(f)$ and $j$ is a top position of $\pi_n$, $f$ must map the two elements of the 
repetition strictly to the left of $f(i)=j$.

It remains to show that the bottom repetition of $\lambda$ appears at positions 
$i-2$ and~$i-1$. Suppose that the bottom repetition appears at positions $i'$ 
and $i'+1$ for some $i'<i-2$. By Condition 3 of the definition of proper permutation 
(page~\pageref{def-proper}), the positions $i'+2,i'+3,\dots,m$ do not have 
any repetition in $\lambda$, that is, they correspond to alternating top and 
bottom elements, starting with a top one. Moreover, $i>i'+2$ by assumption, 
therefore in fact $i\ge i'+4$, since $i'+2$ and $i$ are both 
top positions of $\lambda$. Note that $f'(i'+2)>k$, since $f'(i'+1)=k$.

We define a mapping $f^+\colon[m]\to[2n+2]$, contradicting the choice of $f$, 
as follows: 
\[
f^+(x)=
\begin{cases}
  f(x) \text{ for } x< i,\\
  f'(x-2) \text{ for } x\ge i.
\end{cases}
\]
We easily verify that $f^+$ is an 
embedding of $\lambda$ using Lemma~\ref{lem-proper}: Condition~2) follows 
directly from the definition of $f^+$, Condition 1) follows from 
$f(i-1)<f(i)=j<k=f'(i'+1)<f'(i-2)$, and Condition 3) follows from 
$f^+(m)=f'(m-2)<f'(m)\le 2n+2$. Moreover, $f^+$ is compatible with $g$ since 
$j+1 \notin \Img(f^+)$, and 
$f^+(i)>f(i)$, contradicting the choice of~$f$. This proves Part 2) of the 
lemma.

The proof of Part 3) is similar. Let $(f(i), f(i+1))$ be a 
$\ble{bt}$-gap, and let $j=f(i)$ and $k=f(i+1)$. We will again show that 
$\Delta_{j+1}$ is an involution on $\sEf$, unless $\lambda$ satisfies the 
conditions of Part 3). 

Let $g\in\sEf$ be again an embedding such that $\Delta_{j+1}(g)$ is not in $\sEf$. Let 
$g'=\Delta_{j+1}(g)$ and let $f'$ be the rightmost embedding of $\lambda$ in~$g'$. 
For the same reason as in Part 2), we have $\Img(g')=\Img(g)\cup\{j+1\}$, $f<_L 
f'$, and $f(\ell)\le f'(\ell)$ for every $\ell\in[m]$.
No bottom position in the $\ble{bt}$-gap can belong to $\Img(g)$, otherwise $f$ 
would not be rightmost in~$g$. Again, for the same reason as in Part 2), both 
$j+1$ and $k$ are in $\Img(f')$. Thus, $\lambda$ contains a top repetition to the 
left of $\lambda(i)$, at positions $i_1$ and $i_1+1$ for some $i_1<i$, such that $f'(i_1)=j+1$ and $f'(i_1+1)=k$. 
Let $i_2$ be the largest top position of $\lambda$ smaller than 
$i$. In particular, $i_2$ is equal to $i-1$ or $i-2$. We need to prove that $i_2=i_1+1$, which is equivalent to the condition in part~3).

Suppose that $i_2>i_1+1$. This implies $i_2\ge i_1+3$, since 
$i_1+2$ is a bottom position in~$\lambda$. Since $\lambda$ is 
proper, there is no repetition among the elements 
$\lambda(1),\lambda(2),\dots,\lambda(i_1)$, that is, these elements form an alternation of 
top and bottom elements, starting with a top one. 

We define a mapping $f^+\colon[m]\to[2n+2]$, contradicting the choice of $f$, 
as follows:
\[
 f^+(x)=\begin{cases}
        f(x+2) \text{ for }x\le i_1-2\\
	f(i_2-1) \text{ for } x= i_1-1\\
	f(i_2)\text{ for } x=i_1\\
	f'(x)\text{ for } x>i_1
       \end{cases}
\]
We verify that $f^+$ is an embedding of $\lambda$ using Lemma~\ref{lem-proper}: 
Condition 2) follows directly from the definition of $f^+$ and from the `alternating property' of 
$\lambda$, Condition 1) 
follows from $f(i_1)<f(i_2-1)$ and $f(i_2)<f(i)=j<f'(i_1)<f'(i_1+1)$, and 
Condition 3) follows from $f^+(1)=f(3)>1$. Moreover, $f^+$ is compatible with 
$g$ since $j+1 \notin \Img(f^+)$, and 
$f<_L f^+$, contradicting the definition of~$f$.
\end{proof}

We say that $(\lambda,f)$ is a \emph{proper pair} if $\lambda$ is a proper 
permutation and $f$ is an embedding of $\lambda$ into $\pi_n$ that satisfies 
the three conditions of Lemma~\ref{lem-sf}. Let $\cPP_n$ be the set of all 
proper pairs $(\lambda,f)$ where $f$ is an embedding into~$\pi_n$. Combining formula~\eqref{eq-proper}, Corollary~\ref{cor_proper_lambda} and Lemma~\ref{lem-sf},  
we get

\begin{equation}\label{eq-pp}
 \mu(21,\pi_n)=\E(21,\pi_n) - \sum_{(\lambda,f)\in 
\cPP_n} 
(-1)^{|\lambda|}\NE(21,\lambda)\Sf.
\end{equation}

\paragraph*{Singular embeddings} 
Our goal is now to compute, for a proper pair $(\lambda,f)$, the value 
$\Sf=\sum_{g\in\sEf} (-1)^{|g|}$. To this end, we will introduce further 
cancellations on the set~$\sEf$. Let $j\in[2n+2]$ be the smallest index not belonging to~$\Img(f)$. 
For every embedding $g\in\sEf$, let $g'=\Delta_{j}(g)$.
Clearly, $g' \in \sEl$, since 
$f$ is compatible with~$g'$. However, $g'$ is not necessarily in 
$\sEf$, because $g'$ may be compatible with another embedding $f'$ of 
$\lambda$ with $f<_L f'$.

\begin{example}
Let $\lambda=3142$ and $g=\ble{lbtb \bu \bu 
\bu \bu \bu r}$. The rightmost embedding of $\lambda$ in $g$ is $f=\ble{lbtb\bu \bu \bu \bu \bu 
\bu}$, hence $g$ is in~$\sEf$. The first position not in $\Img(f)$ is the 
fifth one and we have $g'=\ble{lbtbt\bu \bu \bu \bu r}$, where 
the rightmost embedding of $\lambda$ is $\ble{\bu \bu tbt \bu \bu 
\bu \bu r}$. Hence $g'\not\in\sEf$.
\end{example}

We say that an embedding $g\in\sEf$ is \emph{$f$-regular} if $g'\in \sEf$; otherwise we say that 
$g$ is \emph{$f$-singular}. Let $\SEf$ be the set of $f$-singular embeddings in~$\sEf$.

Observe that if $g$ is $f$-regular then $g'$ is also 
$f$-regular. Thus, the $j$-switch restricts 
to a parity-exchanging involution on the set of $f$-regular 
embeddings. This shows that the contributions of $f$-regular embeddings 
to~$\Sf$ cancel out, and therefore 
\begin{equation}\label{eq_Sf_singular}
\Sf=\sum_{g\in\SEf} (-1)^{|g|}.
\end{equation}

We will now analyze $f$-singular embeddings in detail.

For an embedding $f$ of a permutation $\lambda$ of size $m$ 
into $\pi_n$, the set $\{i\in[2n+2];\; i>f(m)\}$ is called the \emph{tail} 
of~$f$. A \emph{segment} of $f$ is a maximal subset of consecutive integers belonging to~$\Img(f)$. Thus,
if $1\in\Img(f)$, then the set $[2n+2]$ can be partitioned into segments, gaps and the tail of~$f$. 

\begin{lemma}
\label{lem-singular} 
Let $(\lambda,f)$ be a proper pair where $|\lambda|=m$ and $f$ is an embedding into $\pi_n$. Let $j=\min([2n+2]\setminus \Img(f))$. Let $g$ be an $f$-singular embedding and let $g'=\Delta_j(g)$. Let $f'$ be 
the rightmost embedding of $\lambda$ in~$g'$. Then the following properties 
hold:
\begin{enumerate}[label=\rm{(\alph*)}]
 \item \label{it1} $\Img(g')=\Img(g) \cup \{j\}$, $j\notin \Img(g)$, $j\in \Img(f')$, and $f <_L 
f'$.
 \item \label{it2} For every $i \in [m]$ we have $f(i)<f'(i)$. 
 \item \label{it3} The embedding $f$ is almost greedy in $g$, and 
$f'$ is greedy in~$g'$. In particular, there is no greedy embedding of~$\lambda$ in $g$, and
$\Img(f')$ and $\Img(g)$ both contain the position $2n+2$. 
 \item \label{it4} $\Img(g)$ has no bottom position in the tail of~$f$, no top 
position in any $\ble{tt}$-gap or $\ble{tb}$-gap of $f$, and no bottom position 
in any $\ble{bb}$-gap or $\ble{bt}$-gap of~$f$. 
 \item \label{it5} If $f$ has at least one gap, then $\Img(g)$ has no position in 
the leftmost gap of~$f$, and has at least one top position in the tail of~$f$.
 \item \label{it6} If $f$ has at least two gaps, and the second gap from the left 
is a $\ble{bt}$-gap or a $\ble{tb}$-gap, then $\Img(g)$ has no position in the 
second gap.
\item \label{it7} If $f$ has a $\ble{tt}$-gap $(f(i),f(i+1))$ and a 
$\ble{bb}$-gap $(f(i+2),\allowbreak f(i+3))$, then $\Img(g)$ contains at most 
one top position in the $\ble{bb}$-gap. 
\end{enumerate}
\end{lemma}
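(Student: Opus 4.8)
The plan is to establish the seven properties in sequence, exploiting the fact that they build on one another. Fix a proper pair $(\lambda,f)$, set $j=\min([2n+2]\setminus\Img(f))$, let $g\in\SEf$, $g'=\Delta_j(g)$, and $f'=$ the rightmost embedding of $\lambda$ in $g'$.

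\textbf{Properties \ref{it1} and \ref{it2}.} Since $j\notin\Img(f)$ and $f$ is the rightmost embedding in $g$, we have $j\notin\Img(g)$; hence $\Img(g')=\Img(g)\cup\{j\}$. Since $g$ is $f$-singular, $f$ is not the rightmost embedding in $g'$, so $f<_L f'$. If $j\notin\Img(f')$ then $f'$ would be compatible with $g$, contradicting that $f$ is rightmost in $g$; hence $j\in\Img(f')$. This gives \ref{it1}. For \ref{it2}, Corollary~\ref{cor_pointwise} gives $f(i)\le f'(i)$ for all $i$; the pointwise maximum of $f$ and $f'$ is an embedding compatible with $g'$, and since $f<_L f'$, it must equal $f'$, so $f(1)=f'(1),\dots$ up to the first index where they differ. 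I would argue that equality can be "pushed left": if $f(i)=f'(i)$ for some $i$ and $f(i-1)<f(i)$, the point is that $j\le f(i)$ (as $j$ is the minimum missing position and $f(i-1)\ge$ something $\ge j$ would force $j\in\Img(f)$ below $f(i)$, contradiction), forcing $f(\ell)=f'(\ell)$ for $\ell<i$ as well, contradicting $f<_L f'$ unless no such $i$ exists — so $f(i)<f'(i)$ everywhere.

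\textbf{Properties \ref{it3}, \ref{it4}, \ref{it5}.} For \ref{it3}: by \ref{it2}, $f(m)<f'(m)\le 2n+2$, so $f(m)<2n+2$, i.e.\ the tail of $f$ is nonempty. If $\Img(g)$ contained a bottom or right position $>f(m)$, then $f$ would not be locally rightmost in $g$ (we could shift $f(m)$ right), contradicting Lemma~\ref{lem-greedy} unless $f(m)=1$... but $m\ge 2$ makes this impossible for proper $\lambda$ of size $\ge 2$ except the edge case; the correct statement from Lemma~\ref{lem-greedy} is that $f$ is almost greedy, forcing $2n+2\in\Img(g)$ and $f(1)=1$ (consistent with Lemma~\ref{lem-sf}(1)), and there is no greedy embedding of $\lambda$ in $g$. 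Then $f'$, being rightmost in $g'$ and using $j<2n+2$, should turn out to be genuinely greedy (not almost greedy) since $f'(1)=f(1)=1$ already — here I'd invoke Lemma~\ref{lem-greedy} again. For \ref{it4}: each clause is a "local rightmost" argument — e.g.\ a top position of $\Img(g)$ inside a $\ble{tt}$- or $\ble{tb}$-gap $(f(i),f(i+1))$ would let us raise $f(i)$, contradicting rightmost (using Lemma~\ref{lem-proper} to check the modified map is an embedding). For \ref{it5}: if $f$ has a leftmost gap $(f(i),f(i+1))$ and $\Img(g)$ meets it at some position $p$, then by \ref{it4} the type of $p$ relative to the gap type is constrained; combining with $f(1)=1$ and the alternating structure before the first repetition (Condition 3 of properness), one shows $f$ could be shifted, contradicting rightmost. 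The "at least one top position in the tail" part follows because $f'$ is greedy in $g'$ with $f'(m)=2n+2$ (a top or right position; since $\lambda(m)$ is a bottom element, $f'(m)$ is a bottom or right position, namely $2n+2$ which is a right position — wait, $2n+2$ is a top position of $\pi_n$); I'd reconcile this carefully and deduce the claim from $f(\ell)<f'(\ell)$ forcing some position of $\Img(g')$, hence of $\Img(g)$, to lie beyond $f(m)$.

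\textbf{Properties \ref{it6} and \ref{it7}, and the main obstacle.} These are the delicate parts. For \ref{it6}: if the second gap from the left is a $\ble{bt}$- or $\ble{tb}$-gap and $\Img(g)$ meets it, I would run the same shifting argument as in Lemma~\ref{lem-sf} Parts 2) and 3) — the presence of a position of $\Img(g)$ in this gap, combined with the conditions of Lemma~\ref{lem-sf} on what lies to the left of such a gap (two bottom elements before a $\ble{tb}$-gap, etc.) and with \ref{it2} and \ref{it5} (nothing in the leftmost gap), lets one build an embedding beating $f$ in $<_L$-order. For \ref{it7}: with a $\ble{tt}$-gap at $(f(i),f(i+1))$ immediately followed by a $\ble{bb}$-gap at $(f(i+2),f(i+3))$ — so $\lambda$ has a top repetition at positions $i,i+1$ then a bottom repetition at $i+2,i+3$ — two top positions of $\Img(g)$ inside the $\ble{bb}$-gap would, together with \ref{it4} (no bottom position there), allow a rearrangement: map the top repetition of $\lambda$ to those two top positions instead, shifting $f(i),f(i+1)$ rightward, again contradicting rightmost. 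I expect \ref{it7} (and the bookkeeping tying the gap types to the repetition structure of proper $\lambda$ via Condition 3 of properness) to be the main obstacle: one must carefully track which of $\lambda(i-1),\lambda(i-2),\lambda(i-3)$ are top or bottom, ensure the modified maps satisfy all three conditions of Lemma~\ref{lem-proper} (especially Condition 3 about $1$ and $2n+2$), and verify compatibility with $g$ (i.e.\ that $j\notin\Img$ of the new map). The recurring technique throughout is: assume the claim fails, produce a position $p\in\Img(g)$ in a forbidden spot, use it to define $f^+$ that either shifts some $f(i)$ rightward or splices in part of $f'$, check via Lemma~\ref{lem-proper} that $f^+\in\sE(\lambda,\pi_n)$ and $f^+$ is compatible with $g$, and conclude $f<_L f^+$, contradicting that $f$ is rightmost in $g$.
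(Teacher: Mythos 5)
Your overall template---build a competing embedding $f^+$ by a local shift or by splicing in part of $f'$, verify it via Lemma~\ref{lem-proper}, check $j\notin\Img(f^+)$ so that $f^+$ is compatible with $g$, and contradict the choice of $f$---is indeed the paper's technique, and your argument for part (d) is fine. But several of your specific steps have genuine gaps. In (a), $j\notin\Img(g)$ does not follow from $f$ being rightmost in $g$; it follows from singularity (if $j\in\Img(g)$, then $f$ is still rightmost in $\Delta_j(g)$, so $g$ would be $f$-regular). In (b), ``pushing equality left'' is not a valid argument: $f\le f'$ pointwise together with $f<_L f'$ does not propagate an equality $f(i_0)=f'(i_0)$ to smaller indices, and your parenthetical about $j$ does not repair this. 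The paper's proof uses that $[1,j-1]$ is a segment of $f$ (so $f(i)=i$ for $i<j$, and greediness of $f'$, which places $j$ at an earlier position of $\lambda$, gives strictness for $i\le j-1$), and for a putative $i_0\ge j$ with $f(i_0)=f'(i_0)$ it splices $f^+$ equal to $f'$ on $[j-1,i_0)$ and to $f$ elsewhere; since $j\notin\Img(f^+)$, $f^+$ is compatible with $g$ and $f<_L f^+$ contradicts rightmostness of $f$ \emph{in $g$}, not $f<_L f'$. In (c), the assertion $f'(1)=f(1)=1$ is false (it contradicts (b)); the correct route is that $f'(m)$ is a bottom-or-right position of $\Img(g')$ with $f'(m)>f(m)$ and $f'(m)\neq j$, hence $f'(m)\in\Img(g)$, which rules out $f$ being greedy, so $f$ is almost greedy, forcing $f'(m)=2n+2$; since an almost greedy embedding never uses $2n+2$, $f'$ is greedy. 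This is also how $2n+2\in\Img(f')$---part of the statement of (c) that your sketch never establishes---is obtained.

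The first half of (e) cannot be proved by ``shifting $f$, contradicting rightmost''. Rightmostness together with $j\notin\Img(g)$, properness and the conclusions of (c) and (d) does not exclude positions of $\Img(g)$ in the leftmost gap: for $\lambda=31452$ embedded into $\pi_4$ with $\Img(f)=\{1,2,3,7,8\}$ (leftmost gap $\{4,5,6\}$, $j=4$), the embedding $g$ with $\Img(g)=\{1,2,3,6,7,8,10\}$ has $f$ rightmost and almost greedy, satisfies (d), yet contains the bottom position $6$ inside the $\ble{tt}$-gap; such a $g$ is $f$-regular, so (e) genuinely needs singularity. The paper's argument is of a different kind: $j\in\Img(f')$ and greediness of $f'$ force the least position $k$ of $\Img(g)$ in the gap into $\Img(f')$ as well, so the two positions of $\lambda$ sent to $j$ and $k$ form a repetition whose gap in $f$ would lie strictly to the left of $j$, contradicting the minimality of $j$---not the rightmostness of $f$. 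Finally, for (f) and (g) you only name the technique; as stated, remapping the top repetition in (g) to $k_1<k_2$ inside the $\ble{bb}$-gap breaks monotonicity against $f(i+2)$. One must splice $f'$ onto all positions $\ge i-1$ (resp.\ $\ge i-3$ in (f)), using the greedy inequalities $f'(i+1)\ge k_2$, $f'(i)\ge k_1$, $f'(i-1)\ge f(i+2)$ (resp.\ $f'(i-1)\ge f(i+1)$, $f'(i-2)\ge k$, $f'(i-3)\ge f(i)$), shift the alternating prefix of $\lambda$ by two positions, and check $1\notin\Img(f')$ and $j\notin\Img(f^+)$. These constructions, and the strict inequalities of (b) and the greediness of $f'$ from (c) that they rely on, are the substance of the proof and are missing from your proposal.
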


\begin{proof} 
\ref{it1} These facts directly follow from $g$ being $f$-singular.

\ref{it2} By Corollary~\ref{cor_pointwise}, we have $f(i)\le f'(i)$ for every~$i$. Since the 
interval $[1,j-1]$ is a segment in $f$, the leftmost $j-1$ elements of $\lambda$ form an 
alternation of top and bottom elements starting with a top one, and $f(i)=i$ for every $i<j$.  
Since $j$ is in $\Img(f')\setminus \Img(f)$, we have $j=f'(i')$ for some $i'< j-1$ that has the same parity as $j$.
Since $f'$ is rightmost, by Lemma~\ref{lem-greedy} we have $f'(i)=i+(j-i')$ for every 
$i\le i'$. Consequently, $f(i)<f'(i)$ for every $i\le j-1$. 

Suppose that for some $i_0\ge j$ we have $f(i_0)=f'(i_0)$. Define a mapping $f^+$ by
\[
 f^+(x)=\begin{cases} f'(x) \text{ if } j-1\le x<i_0\\ f(x)\text{ 
    otherwise.}\end{cases}
\]
Clearly, $f^+$ satisfies all three conditions of Lemma~\ref{lem-proper} and thus 
it is an embedding of $\lambda$. Also $j\notin \Img(f^+)$, so $f^+$ is 
compatible with $g$. Finally, $f(j-1)=j-1$ and $f'(j-1)>j$ imply $f<_L f^+$; 
this is a contradiction with $f$ being rightmost in $g$.

\ref{it3} By Lemma~\ref{lem-greedy}, we know that $f$ is greedy or almost greedy in 
$g$, and $f'$ is greedy or almost greedy in~$g'$. Note that the value 
$j\in\Img(g')\setminus\Img(g)$ cannot be equal to either of $f(m)$ or $f'(m)$: indeed, 
we have either $j<m$ (in case $f$ has a gap), or $j=m+1$ (when $f$ has no gap) 
and in the latter case $j$ is a top position and $m$ a bottom one. In particular, $f'(m)\in 
\Img(g)$. By Part \ref{it2}, we have $f(m)<f'(m)$, which implies that $f$ is almost greedy in $g$, 
further implying that $f'(m)=2n+2$ and $f'$ 
is greedy in~$g'$.
By Lemma~\ref{lem-greedy}, this implies that there is no greedy embedding of~$\lambda$ in $g$.

\ref{it4} If $i\in\Img(g)$ is a bottom position in the tail of $f$, or a top 
position in a $\ble{tt}$-gap or $\ble{tb}$-gap of $f$, or a bottom position in 
a $\ble{bb}$-gap or $\ble{bt}$-gap of~$f$, we get a contradiction with the 
almost-greedy property of $f$, since the position of $\lambda$ mapped by $f$ to the largest position of $\Img(f)$ to the 
left of $i$ would be mapped to $i$ or to the right of $i$ by an almost-greedy embedding.

\ref{it5} Suppose that $\Img(g)$ has a position in the leftmost 
gap $(f(j-1),f(j))$ of $f$, and let $k$ be the leftmost such position. Assume 
that $\lambda(j-1)$ is a top element; the other case is analogous, with the roles of bottom and top 
elements exchanged. Thus, $j$ is a bottom position of~$\pi_n$. By parts \ref{it1} and \ref{it4} of 
the current lemma, $\Img(g)$, and therefore also $\Img(g')$ and $\Img(f')$, have 
no top position in the gap $(f(j-1),f(j))$, so $k$ is a bottom position and $k>j$.

The facts that $j\in \Img(f')$, $f'$ is greedy, and $j$ and $k$ are consecutive bottom positions in $\Img(g')$, imply that $k\in\Img(f')$. Thus, the two positions of 
$\lambda$ that are mapped to $j$ and $k$ by $f'$ form a bottom repetition in $\lambda$. This 
bottom repetition forms a gap in $f$ which is to the left of $j$, contradicting 
the definition of~$j$. This proves that $\Img(g)$ has no position in the leftmost gap of $f$.

Now we show that $\Img(g)$ has a top position in the tail of~$f$.
Since $\lambda$ is proper, the position $m-1$ is the rightmost top position in 
$\lambda$. By Part~\ref{it2} of the current lemma, we have $f'(m-1)>f(m-1)$. We 
claim that $f'(m-1)$ is in the tail of $f$: if not, then $(f(m-1),f(m))$ 
would be a $\ble{tb}$-gap in $f$ and $f'(m-1)$ would be a top position in this 
$\ble{tb}$-gap, contradicting Part~\ref{it4} of the current lemma. 
Therefore, $f'(m-1)$ is a top position in the intersection of $\Img(g')$ and the 
tail of~$f$. Finally, since $f$ has at least one gap, we have $f(m-1)\ge j-1$, implying $f'(m-1)>j$, and 
hence $f'(m-1)$ is in $\Img(g)$ as well.

\ref{it6} Let $(f(i),f(i+1))$ be the second gap of~$f$ from the left. Suppose that this 
is a $\ble{bt}$-gap; the other case is analogous, with the roles of bottom and top elements exchanged. By Lemma~\ref{lem-sf} we have $i\ge 4$, and since $f$ cannot have both a $\ble{tt}$-gap and a $\ble{bb}$-gap 
to the left of $f(i)$, the leftmost gap of $f$ is the $\ble{tt}$-gap $(f(i-2),f(i-1))$. 
For contradiction, 
suppose that $g$ contains a position $k$ in the $\ble{bt}$-gap $(f(i),f(i+1))$ of~$f$. By 
Part~\ref{it4}, we know that $k$ is a top position. 

By Part~\ref{it2}, we have $f'(i)>f(i)$, and since $\Img(g')$ has no bottom 
position in the $\ble{bt}$-gap $(f(i),f(i+1))$ of $f$, this implies $f'(i)>f(i+1)$.
Since $f'$ is 
greedy, it follows that $f'(i-1)\ge f(i+1)$, $f'(i-2)\ge k$, and $f'(i-3)\ge f(i)>f(i-2)$. 
We may now define a mapping $f^+$ as
\[
f^+(x)=\begin{cases} f'(x) \text{ if } x\ge i-3\\ 
f(x+2)\text{ if } x\le i-4.\end{cases}
\]
Since $1\notin \Img{f'}$, $f^+$ is an embedding of $\lambda$, clearly satisfying $f<_L f^+$. Since $f^+$ does not use the position $j$ from the first gap of $f$, $f^+$ is compatible with $g$, which contradicts $f$ being the rightmost in~$g$.

\ref{it7} Suppose first that $(f(i), f(i+1))$ is a $\ble{tt}$-gap and $(f(i+2), f(i+3))$ 
a $\ble{bb}$-gap. Since $\lambda$ is proper, we have $i\ge 3$.
Since $\lambda$ has at most one top repetition and at most one bottom repetition, 
Lemma~\ref{lem-sf} 
implies that the $\ble{tt}$-gap $(f(i),f(i+1))$ is the leftmost gap of~$f$. By part~\ref{it4}, 
$\Img(g)$, and consequently also $\Img(g')$ and $\Img(f')$, have no bottom position in the 
$\ble{bb}$-gap $(f(i+2), f(i+3))$. 
By Part~\ref{it2}, we have $f'(i+2)>f(i+2)$, and therefore $f'(i+2)\ge f(i+3)$. 
For contradiction, suppose that $\Img(g)$ contains at least two top positions $k_1<k_2$ in the $\ble{bb}$-gap of $f$.
The greediness of $f'$ implies 
$f'(i+1)\ge k_2$, $f'(i)\ge k_1$ and $f'(i-1)\ge f(i+2)$. 
We then define $f^+$ by
 \[
f^+(x)=\begin{cases} f'(x) \text{ if } x\ge i-1\\ 
f(x+2)\text{ if } x\le i-2.\end{cases}
\]
By the same reasoning as in Part~\ref{it6}, we get a contradiction with $f$ being the rightmost 
in~$g$.
\end{proof}

\subsection{Adding up all contributions} 

Let $\pi_n$ be fixed. Recall from~\eqref{eq-pp} and~\eqref{eq_Sf_singular} that 
\begin{equation}\label{eq-pp2}
 \mu(21,\pi_n)=\E(21,\pi_n) - \sum_{(\lambda,f)\in 
\cPP_n} 
(-1)^{|\lambda|}\NE(21,\lambda)\Sf,
\end{equation}
where $\Sf=\sum_{g\in\SEf}(-1)^{|g|}$.
We will now evaluate the sum on the right-hand side of \eqref{eq-pp2}. We 
will distinguish the proper pairs $(\lambda,f)$ depending on the number of 
repetitions of~$\lambda$ and the number of gaps of~$f$. For integers $a\le b$, 
we let $[a,b]$ denote the set $\{a,a+1,\dots,b\}$. When representing the 
structure of an embedding by its hyphen-letter notation, we underline the 
individual segments for added clarity, and we use the ellipsis 
`$\dots$' for segments of unknown length. We will use an auxiliary symbol 
`$\ble{\pgap}$' to denote a sequence of hyphens of arbitrary length, possibly empty. In particular, \texttt{-*-} represents a sequence of hyphens of length at least $2$, and if `$\pgap$' is adjacent to a segment from both left and right, the two segments may possibly form a single segment. We say that such a potentially empty sequence of hyphens represents a \emph{potential gap}.

\paragraph*{Case A: $\lambda$ has no repetitions} 
Then $f$ has no gaps, by 
Lemma~\ref{lem-sf}. In the hyphen-letter notation, we have
\[
f=\ble{\seg{lbtb\dots tbtb} \bu\pgap\bu}.
\] 
Let $\cPP_A$ be the set of proper pairs $(\lambda,f)$ of this form; similarly, 
$\cPP_B, \cPP_C,\allowbreak \cPP_D$ and $\cPP_E$ will be the sets of proper 
pairs to 
be considered in subsequent cases. 

Fix a proper pair $(\lambda,f)\in \cPP_A$.
We claim that $\SEf$ contains exactly one embedding $g_A$, determined by
$\Img(g_A)=\Img(f)\cup\{2n+2\}$; that is, 
\[
g_A=\ble{\seg{lbtb\dots tbtb} \bu\pgap \seg{r}}.
\] 
It is easy to see that $f$ is the rightmost embedding in $g_A$ (recall that by 
Lemma~\ref{lem-proper}, no embedding of $\lambda$ may contain both $1$ and $2n+2$ in its image, and 
in particular, 
there is no greedy embedding of $\lambda$ in $g_A$). Since $|g_A|$ is odd, the contribution of 
$g_A$ to $S_f$ is $(-1)^{|g_A|}=-1$.

Now assume that $g\in \SEf$. We have $2n+2 \in \Img(g)$ by 
Lemma~\ref{lem-singular} \ref{it3}. By Lemma~\ref{lem-singular} \ref{it4}, $\Img(g)$ has no 
bottom position in the tail of~$f$. We claim that $\Img(g)$ has no top position 
in the tail of $f$ either. Indeed, if $\Img(g)$ contained a top 
position $k$ in the tail of $f$, then $g$ would be compatible with a greedy embedding $f^+$ of $\lambda$ satisfying 
$\Img(f^+)=\left(\Img(f)\cup\{k,2n+2\}\right)\setminus\{1,2\}$, and this would contradict 
Lemma~\ref{lem-singular}~\ref{it3}. Therefore, 
$\Img(g)=\Img(f)\cup\{2n+2\}$.

To compute $\sum_{(\lambda,f)\in\cPP_A}(-1)^{|\lambda|}\NE(21,\lambda)\Sf$, we 
reason as 
follows: to every triple $(i_1,i_2,i_3)$ with $1\le i_1\le i_2\le i_3\le n$ we 
associate a proper $\lambda$ of size $2i_3$ with no repetitions, and a 
normal embedding $h$ of 21 into $\lambda$ with $\Img(h)=\{2i_1-1,2i_2\}$. As 
there are $\binom{n+2}{3}$ triples $(i_1,i_2,i_3)$ of this form, and $\Sf=-1$ 
for all $(\lambda,f)\in\cPP_A$, we get
\[
 \sum_{(\lambda,f)\in\cPP_A}(-1)^{|\lambda|}\NE(21,\lambda)\Sf = 
-\binom{n+2}{3}.
\]

\paragraph*{Case B: $\lambda$ has a bottom repetition but no top repetition} 
Then, by Lemma~\ref{lem-sf}, $f$ has a $\ble{bb}$-gap, and possibly also a $\ble{tb}$-gap 
immediately 
following it; that is, 
\[
f =\ble{\seg{lb\dots tb}\bu \pgap \seg{bt}\pgap \seg{btb\dots tb}\bu\pgap \bu}\\
\]
with the second segment of length exactly $2$ and the third of length at 
least~$1$, and these two are possibly combined into a single segment.

By Lemma~\ref{lem-singular} \ref{it3} and \ref{it5}, if $g\in\SEf$, then $\Img(g)$ contains $2n+2$ 
as well as at least one 
top position in the tail of~$f$. On the other hand, by Lemma~\ref{lem-singular} \ref{it5} and 
\ref{it6}, $\Img(g)$ has no position in the gaps of~$f$.


Conversely, we claim that if $\Img(g)=\Img(f)\cup T\cup\{2n+2\}$ where $T$ 
is a nonempty set of top positions in the tail of~$f$, then $g\in \SEf$.
Clearly $g$ is compatible with $f$ since $\Img(f)\subset\Img(g)$. To show that $g\in \sEf$, we 
observe that $f$ is almost greedy in $g$ and that there is no greedy embedding of 
$\lambda$ in $g$, since every embedding of $\lambda$ compatible with $g$ must coincide with $f$ on all top positions of $\lambda$ before the repetition. It remains to show that $g$ is singular. If $j$ is the leftmost position in the leftmost gap of $f$, $g'=\Delta_j(g)$, $k$ is the second position in the second segment of $f$, and $l$ is the rightmost position of $T$, then the embedding $f^+$ of $\lambda$ with image $\Img(f)\setminus\{1,2,k\}\cup\{j,l,2n+2\}$ is greedy in $g'$ and satisfies $f<_L f^+$.

We now compute the value of~$S_f$.
We can apply the involution $\Delta_{2n+1}$ to cancel out the contribution of 
those embeddings $g\in\SEf$ that contain at least one top position in the tail 
different from~$2n+1$. This leaves exactly one embedding in $\SEf$ that is not 
cancelled, namely,
\[
 g_B =\ble{\seg{lb\dots tb}\bu \pgap \seg{bt}\pgap \seg{btb\dots tb}\pgap \seg{tr}}.
\]
Since $|g_B|$ is odd, the contribution of $g_B$ to $S_f$ is $(-1)^{|g_B|}=-1$ and hence $S_f=-1$.

Note that a normal embedding of $21$ into $\lambda$ must map the second element 
of $21$ to the second element of the bottom repetition of~$\lambda$. To compute 
$\sum_{(\lambda,f)\in\cPP_B}(-1)^{|\lambda|} \NE(21,\lambda)\Sf$, we encode the contributions 
to this sum as quintuples $(i_1,i_2,\dots,i_5)$ with $1\le i_1\le 
i_2<i_3<i_4\le i_5\le n$, corresponding to the embedding $f$ with segments 
$[1, 2i_2]$,  $\{2i_3,2i_3+1\}$, and $[2i_4,2i_5]$ (the latter two segments 
possibly merged into a single one), and the normal embedding $h$ of $21$ into 
$\lambda$ specified by $\Img(fh)=\{2i_1-1,2i_3\}$, where $fh$ is the embedding of $21$ to $\pi_n$ 
that is a composition of $h$ and $f$. Since $|\lambda|$ is odd, we have
\[
 \sum_{(\lambda,f)\in\cPP_B} (-1)^{|\lambda|}\NE(21,\lambda)\Sf=\binom{n+2}{5}.
\]

\paragraph*{Case C: $\lambda$ has a top repetition and no bottom repetition} The proper 
permutations $\lambda$ of this form are precisely the reverse-complements of the permutations 
considered in Case B. From this, we may deduce that the contributions of the two cases are equal. 
To see this, let $\cP_B$ denote the set of all the proper permutations with a bottom repetition and 
no top repetition, and $\cP_C$ the set of all the proper permutations with a top repetition and no 
bottom repetition. We then obtain
\begin{align*}
\sum_{(\lambda,f)\in\cPP_C}\!(-1)^{|\lambda|}\NE(21,\lambda)\Sf&=\sum_{\lambda\in\cP_C} 
(-1)^{|\lambda|}\NE(21,\lambda)S_\lambda\\
&=\sum_{\lambda\in\cP_B} (-1)^{|\lambda|}\NE(21,\lambda)S_\lambda 
&\text{(Lemma~\ref{lem-properrc})}\\
&=\sum_{(\lambda,f)\in\cPP_B}\! (-1)^{|\lambda|}\NE(21,\lambda)\Sf\\&=\binom{n+2}{5}.
\end{align*}

\paragraph*{Case D: $\lambda$ has two repetitions, and the top repetition is not 
adjacent to the bottom one} We then have $\NE(21,\lambda)=1$, 
and by Lemma~\ref{lem-sf}, $f$ has the form
\[
f =\seg{lbt\dots bt}\bu \pgap \seg{tb}\pgap 
\seg{tb\dots tb} \bu\pgap \seg{bt} \pgap 
\seg{bt\dots tb}\bu\pgap\bu,
\]
with the second and the fourth segments of length $2$, either of them possibly 
combined with the following segment into a segment of length at least~$3$. 

Fix $g\in\SEf$. By Lemma~\ref{lem-singular} \ref{it3}, \ref{it5} and \ref{it6}, $\Img(g)$ contains 
$2n+2$ and at least one top position in the tail of~$f$, but it has no position in the 
$\ble{tt}$-gap or the $\ble{bt}$-gap of~$f$. Moreover, By Lemma~\ref{lem-singular} \ref{it4}, 
$\Img(g)$ has no bottom position in the $\ble{bb}$-gap of $f$ and no top position in the 
$\ble{tb}$-gap of~$f$. We conclude 
that $\Img(g)=\Img(f)\cup \Tbb\cup \Btb \cup T\cup \{2n+2\}$ where $\Tbb$ is a 
set of top positions in the $\ble{bb}$-gap of $f$, $\Btb$ is a set of bottom 
positions in the $\ble{tb}$-gap of $f$, and $T$ is a nonempty set of top positions in the tail 
of~$f$.

Moreover, at least one of $\Tbb$ and $\Btb$ must be nonempty, otherwise the mapping 
$f'$ defined as in Lemma~\ref{lem-singular} would have to map the top elements in the third segment 
of $f$ to the same positions as $f$, contradicting Lemma~\ref{lem-singular}~\ref{it2}. 

On the other hand, it cannot happen that $\Tbb$ and $\Btb$ are both nonempty, because in this case $g$ would admit a greedy embedding of~$\lambda$, contradicting 
Lemma~\ref{lem-singular}~\ref{it3}; we illustrate this in the following example, where $f^+$ is the greedy 
embedding of $\lambda$ into~$g$:
\begin{align*}
f &=\ble{\seg{lbtbtbt}\bu\bu\bu \seg{tb}\bu\bu \seg{tbtb} \bu\bu\bu \seg{bt} \bu\bu
\seg{btb}\bu\bu\bu\bu},\\
g &=\ble{\seg{lbtbtbt}\bu\bu\bu \seg{tb}\bu\bu \seg{tbtbt} \bu\bu \seg{btb}\bu
\seg{btb}\bu\bu \seg{tr}}, \\
f^+ &=\ble{\bu\bu\seg{tbtb}\bu \bu\bu\bu \seg{tb}\bu\bu \seg{t}\bu\seg{tbt}\bu \bu \seg{btb}\bu
\seg{btb}\bu\bu \seg{tr}}.
\end{align*}
More explicitly, the greedy embedding $f^+$ maps the rightmost position of $\lambda$ to $2n+2$, the 
second rightmost position to the rightmost position of $T$, the remaining positions from the fourth 
and fifth segment of $f$ to positions of the fifth segment of $f$, the left 
element of the bottom repetition of $\lambda$ to the rightmost position in $\Btb$,  
the remaining elements of the second and third segment of $f$ to the positions 
of the fourth segment of $f$, the rightmost position of $\Tbb$, and the positions of the third 
segment of $f$ except the leftmost two. The left element of the top repetition of $\lambda$ then 
gets mapped to the leftmost position of the third segment, and the remaining elements of the first 
segment of $f$ are shifted to the right greedily, freeing the leftmost two positions of~$\pi_n$. By 
Lemma~\ref{lem-proper}, $f^+$ is an embedding of $\lambda$, contradicting $g\in\SEf$.

In the above example, we focus on the situation when $\Tbb$ and $\Btb$ are both singleton sets, 
that is, when $\Img(g)$ is as small as possible with respect to a given~$f$. This does not lose any 
generality, since adding more positions to $\Img(g)$ would not change the fact that $g$ is 
compatible with the embedding $f^+$ and therefore $f$ is not rightmost in~$g$. We also note that the 
only way in which a fully general $f$ can deviate from the specific example illustrated above is in 
the lengths of the gaps and the tail, and in the length of the first, third and fifth segment. The 
gaps and segments of $g$ and of $f^+$ can be adjusted in an obvious way to match a given~$f$. The 
remarks in this paragraph apply to our future examples in this section as well, and we will refrain 
from repeating them explicitly.

We have concluded that for any $g\in\SEf$, exactly one of the two sets $\Tbb$ and $\Btb$ is empty.
Conversely, it is straightforward to verify that if $g$ is an embedding whose image has the form 
$\Img(f)\cup \Tbb\cup \Btb \cup T\cup \{2n+2\}$ as above, with exactly one of $\Tbb$ and $\Btb$ 
being empty, then $g\in\SEf$; see the following examples: 

\begin{align*}
f &=\ble{\seg{lbtbtbt}\bu \bu\bu \seg{tb}\bu\bu \seg{tbtb} \bu\bu\bu \seg{bt} \bu\bu\seg{btb}\bu\bu\bu\bu},\\
g_1 &=\ble{\seg{lbtbtbt}\bu \bu\bu \seg{tb}\bu\bu \seg{tbtbt} \bu\bu \seg{bt}\bu\bu\seg{btb}\bu\bu \seg{tr}},\\
f'_1 &=\ble{\bu\bu\seg{tbtbtb}\bu\bu \seg{t}\bu \bu\bu\seg{tbtbt}\bu\bu \seg{b}\bu\bu\bu\seg{btb}\bu\bu \seg{tr}},\\
%
g_2 &=\ble{\seg{lbtbtbt}\bu \bu \bu \seg{tb}\bu \bu \seg{tbtb} \bu\bu\bu \seg{btb}\bu\seg{btb}\bu\bu \seg{tr}}, \\
f'_2 &=\ble{\bu\bu\seg{tbtbtb}\bu \bu \seg{t}\bu \bu\bu\seg{tbt}\bu\bu\bu\bu \seg{btb}\bu\seg{btb}\bu\bu \seg{tr}}.
\end{align*}
As the examples show, if $g_1$ is an embedding satisfying $\Img(g_1)=\Img(f)\cup \Tbb 
\cup T\cup \{2n+2\}$ for nonempty $\Tbb$ and $T$, and $j$ is the leftmost position not in $\Img(f)$, then there is a greedy embedding $f'_1$ of $\lambda$ in $g'_1=\Delta_j(g_1)$. Likewise, for $g_2$ with $\Img(g_2)=\Img(f)\cup \Btb \cup T\cup \{2n+2\}$ with $\Btb$ and $T$ nonempty, there is a 
greedy embedding $f'_2$ of $\lambda$ in $g'_2=\Delta_j(g_2)$. This shows that the 
embeddings $g_1$ and $g_2$ belong to~$\SEf$. 

Note that $\Btb$ can be nonempty only when $f$ has a $\ble{tb}$-gap.

As in Case B, we can apply the involution $\Delta_{2n+1}$ to cancel out the contributions of all 
$g\in\SEf$ except those for which $T=\{2n+1\}$. By an analogous argument, we cancel out all 
$g\in\SEf$ except those for which $\Tbb$ is either empty or a singleton set containing the leftmost 
element of the $\ble{bb}$-gap of~$f$, and $\Btb$ is either empty or a singleton set containing the 
leftmost element of the $\ble{tb}$-gap of~$f$. After these cancellations, the contribution of 
$\SEf$ restricts to just two embeddings $g_1$ and $g_2$ shown in the two examples,
where $g_2$ is only applicable if $f$ has a $\ble{tb}$-gap.
Since both $|g_1|$ and $|g_2|$ are odd, the contribution of each of $g_1$ and $g_2$ to $S_f$ is 
$-1$.

%

The embeddings 
of the form $g_1$ can be encoded by 7-tuples 
$1\le i_1<i_2<i_3<i_4<i_5<i_6\le i_7\le n$ where $\Img(f)=[1,2i_1+1]\cup\{2i_2+1,2i_2+2\}\cup[2i_3+1,2i_4]\cup\{2i_5,2i_5+1\}\cup[2i_6,2i_7]$. The embeddings $g_2$ can be encoded in the same way, only now 
we have the extra condition that $i_6>i_5+1$. Therefore, there are $\binom{n+1}{7}$ embeddings of the form $g_1$ and 
$\binom{n}{7}$ embeddings of the form $g_2$, so the total contribution from Case D is 
\[
\sum_{(\lambda,f)\in\cPP_D} 
(-1)^{|\lambda|}\NE(21,\lambda)\Sf=-\binom{n+1}{7}-\binom{n}{7}.
\]

\paragraph*{Case E: $\lambda$ has two repetitions, and they are adjacent to each 
other} Again, we have $\NE(21,\lambda)=1$. By Lemma~\ref{lem-sf}, the form of $f$ is
\[
f =\seg{lbt\dots bt}\bu \pgap\seg{tb}\bu \pgap  
\seg{b}\pgap\seg{t} \pgap\seg{btb\dots tb}  \bu\pgap\bu,
\]
where the third and fourth segment, as well as the fourth and fifth segment, are 
separated by a potential gap; that is, any of these two pairs of consecutive 
segments may in fact be merged into a single segment.

Fix $g\in\SEf$. By Lemma~\ref{lem-singular}~\ref{it4} and \ref{it5}, we have $\Img(g)=\Img(f)\cup\Tbb\cup \Tbt \cup \Btb\cup T\cup\{2n+2\}$ where $\Tbb$ is a set of top 
positions in the $\ble{bb}$ gap of $f$, $\Tbt$ and~$\Btb$ are defined analogously, and $T$ is a 
nonempty set of top positions in the tail of~$f$. In addition, Lemma~\ref{lem-singular}~\ref{it7} implies $|\Tbb|\le 1$.

First we assume that $\Btb$ is nonempty; this is of course only possible when $f$ has a 
$\ble{tb}$-gap. Then $\Tbb$ and $\Tbt$ are both empty, otherwise $g$ would admit a greedy embedding 
of~$\lambda$, as illustrated by the following examples:
\begin{align*}
f &=\seg{lbtbt}\bu\bu\bu\seg{tb}\bu\bu\bu \seg{b}\bu\bu\seg{t} \bu\bu\seg{btb} \bu\bu\bu\bu,\\
g_1  &=\seg{lbtbt}\bu\bu\bu\seg{tb}\bu\bu\bu \seg{bt}\bu\seg{tb} \bu\seg{btb}\bu \bu\seg{tr},\\
f'_1 &=\bu\bu\seg{tb}\bu\bu\bu\bu\seg{t}\bu\bu\bu\bu \seg{bt}\bu\seg{tb}\bu\seg{btb}\bu\bu\seg{tr},\\
g_2  &=\seg{lbtbt}\bu\bu\bu\seg{tbt}\bu\bu \seg{b}\bu\bu\seg{tb} \bu\seg{btb}\bu \bu\seg{tr},\\
f'_2 &=\bu\bu\seg{tb}\bu\bu\bu\bu\seg{tbt}\bu\bu\bu\bu\bu\seg{tb}\bu\seg{btb}\bu\bu\seg{tr}.
\end{align*}
As the first example illustrates, if $g_1$ is an embedding with $\Img(g_1)=\Img(f)\cup \Tbt\cup 
\Btb\cup T\cup\{2n+2\}$ with $\Tbt$, $\Btb$ and $T$ all nonempty, then $g_1$ admits the greedy 
embedding $f'_1$ of $\lambda$, and in particular $g_1\not\in\SEf$. Likewise, an embedding $g_2$ 
with $\Img(g_2)=\Img(f)\cup \Btb\cup\Tbb\cup T\cup\{2n+2\}$ and $\Btb$, $\Tbb$ and $T$ all nonempty 
admits the greedy embedding $f'_2$ of $\lambda$ and hence does not belong to~$\SEf$.

Thus $\Img(g)=\Img(f)\cup \Btb\cup T\cup\{2n+2\}$ with $\Btb$ and $T$ both nonempty. Conversely, every $g$ 
with $\Img(g)$ of this form belongs to~$\SEf$; see the following example, which shows that for 
$j=\min [2n+2]\setminus\Img(f)$, the embedding $g'=\Delta_j(g)$ admits a greedy embedding $f'$ of 
$\lambda$:
\begin{align*}
f &=\seg{lbtbt}\bu\bu\bu\seg{tb}\bu\bu\bu \seg{b}\bu\bu\seg{t}\bu\bu\bu\bu\seg{btb} \bu\bu\bu\bu,\\
g &=\seg{lbtbt}\bu\bu\bu\seg{tb}\bu\bu\bu \seg{b}\bu\bu\seg{tb}\bu\bu\bu\seg{btb}\bu 
\bu\seg{tr},\\
f' &=\bu\bu\seg{tbtb}\bu\bu\seg{t}\bu\bu\bu\bu\bu\bu\bu\seg{tb}\bu\bu\bu\seg{btb}\bu\bu\seg{tr}.
\end{align*}

Applying analogous cancellations as in Case B and Case D, we cancel out all $g$ of this form except 
those with $T=\{2n+1\}$ and $\Btb$ a singleton set containing the leftmost position in the 
$\ble{tb}$-gap of~$f$. Thus the contribution of this type of $g$ to $\SEf$ reduces to the following 
embedding $g$, shown along with 
$f$ for clarity:
\begin{align*}
f& =\seg{lbt\dots bt}\bu \pgap \seg{tb}\bu \pgap
\seg{b}
\pgap
\seg{t}\bu
\pgap\bu
\seg{btb\dots tb}
\bu\pgap\bu,\\
g& =\seg{lbt\dots bt}\bu \pgap \seg{tb}\bu \pgap
\seg{b}
\pgap
\seg{tb}
\bu \pgap
\seg{btb\dots tb}
\pgap\seg{tr}.
\end{align*}
Since $|g|$ is odd, the contribution of $g$ to $S_f$ is $-1$.
Every $g$ of this form can be encoded by a 6-tuple $1\le i_1<i_2<i_3< 
i_4<i_5< i_6\le n$ where $\Img(f)=[1,2i_1+1]\cup\{2i_2+1,2i_2+2, 2i_3+2, 2i_4+1\}\cup[2i_5+2,2i_6]$. Therefore, the number of these embeddings $g$ is $\binom{n}{6}$.

Now we assume that $\Btb$ is empty.  
We claim that $\Tbb$ is nonempty and thus $|\Tbb|=1$: if $\Tbb$ were empty, every embedding of 
$\lambda$ compatible with $g'$ would coincide with $f$ on the top positions in the first two 
segments of $f$; in particular, there would be no greedy embedding of $\lambda$ in $g'$.

We conclude that $\Img(g)=\Img(f)\cup\{k\}\cup \Tbt\cup T\cup\{2n+2\}$ where 
$k$ is a top position in the $\ble{bb}$-gap of $f$, $\Tbt$ is a possibly empty set of top positions 
in the $\ble{bt}$-gap of $f$, and $T$ is a nonempty set of top positions in the tail of~$f$. 

Conversely, every such $g$ belongs to $\SEf$; see the following example, illustrating the greedy 
embedding $f'$ into $g'=\Delta_j(g)$, where $j=\min[2n+2]\setminus\Img(f)$:
\begin{align*}
f&=\seg{lbtbt}\bu\bu\bu\seg{tb}\bu\bu\bu\seg{b}\bu\bu\bu\bu\seg{t}\bu\bu\seg{btb} \bu\bu\bu\bu,\\
g&=\seg{lbtbt}\bu\bu\bu\seg{tbt}\bu\bu  \seg{b}\bu\bu\bu\bu\seg{t}\bu\bu\seg{btb}\bu\bu\seg{tr},\\
f' &=\bu\bu\seg{tbtb}\bu\bu\seg{t}\bu\seg{t}\bu\bu\seg{b}\bu\bu\bu \bu\bu\bu\bu\seg{btb}\bu\bu\seg{tr}.
\end{align*}

The contributions of these embeddings sum to zero whenever $f$ has a $\ble{bt}$-gap, since for 
any top position $i$ in the $\ble{bt}$-gap, the $i$-switch $\Delta_i$ is a parity-exchanging 
involution on these embeddings. If $f$ has no $\ble{bt}$-gap, usual cancellations restrict the  
contributions of this type of $g$ to the case $T=\{2n+1\}$, corresponding to the following $f$ 
and~$g$:
\begin{align*}
f& =\seg{lbt\dots bt}\bu \pgap \seg{tb}
\bu \pgap \bu
\seg{bt}
\pgap
\seg{btb\dots tb}
\bu\pgap\bu,\\
g& =\seg{lbt\dots bt}\bu \pgap \seg{tb}
\pgap \seg{t} \pgap
\seg{bt}
\pgap
\seg{btb\dots tb}
\pgap\seg{tr}.
\end{align*}
Since $|g|$ is odd, the contribution of $g$ to $S_f$ is $-1$.
Every $g$ of this form can be encoded by a 6-tuple $1\le i_1<i_2<i_3< 
i_4<i_5\le i_6\le n$ where $\Img(g)=[1,2i_1+1]\cup\{2i_2+1,2i_2+2,2i_3+1,2i_4,2i_4+1\}\cup[2i_5,2i_6]\cup\{2n+1,2n+2\}$.
Therefore, the number of these embeddings $g$ is $\binom{n+1}{6}$.

Adding the contributions of the two types of embeddings considered in Case~E, we get 
\[
\sum_{(\lambda,f)\in\cPP_E} 
(-1)^{|\lambda|}\NE(21,\lambda)\Sf=-\binom{n}{6}-\binom{n+1}{6}.
\]

\paragraph*{Final count}
Adding the contributions of Case D and E, we get $-\binom{n+1}{7}- \binom{n}{7} 
- \binom{n+1}{6} - \binom{n}{6}= -\binom{n+2}{7}-\binom{n+1}{7}$. 

Note that $\E(21,\pi_n)= \binom{n}{2}+2n$: indeed, all the embeddings of $21$ into $\pi_n$ are 
normal, and among all such embeddings $f$, there are exactly $n$ choices for which $f(1)=1$ and 
$f(2)$ is a bottom position, $n$ choices for which $f(1)$ is a top position and $f(2)=2n+2$, and 
$\binom{n}{2}$ choices for which $f(1)$ is a top position, $f(2)$ a bottom position and $f(1)<f(2)$.

Summing all the contributions together, we get 
\begin{align*}
\mu(1,\pi_n)&=-\mu(21,\pi_n)= -\E(21,\pi_n) + \sum_{(\lambda,f)\in\cPP_n} 
(-1)^{|\lambda|}\NE(21,\lambda)\Sf\\
&= -\binom{n+2}{7}-\binom{n+1}{7} +2\binom{n+2}{5} -\binom{n+2}{3} -\binom{n}{2}-2n,
\end{align*}
and Theorem~\ref{thm-cross} is proved.

\section{Further directions and open problems}

Determining the fastest possible growth of $|\mu(1,\pi)|$ as a function of~$|\pi|$ is still widely open. Defining
\[ 
f(n) = \max \{|\mu(1,\pi)|; |\pi|=n\},
\]
Theorem~\ref{thm-cross} gives an asymptotic lower bound $f(n) \ge \Omega(n^7)$.
We believe this is just a first step towards proving much better lower bounds on $f(n)$. The main obstacle here is our inability to compute or even estimate $|\mu(1,\pi)|$ for a general~$\pi$. 

\begin{figure}
  \centerline{\includegraphics[width=0.8\linewidth]{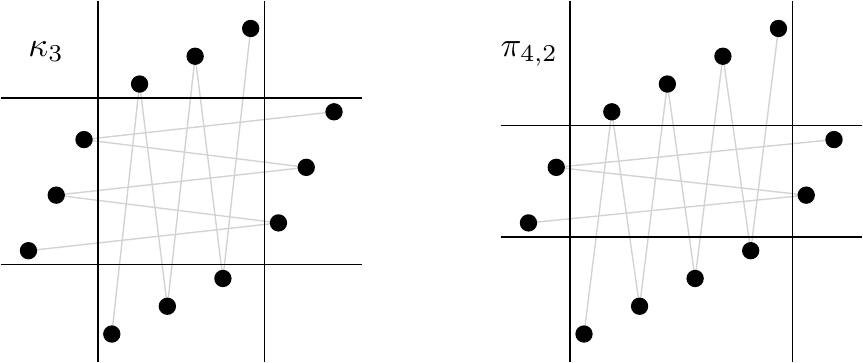}}
  \caption{The permutations $\kappa_3$ (left) and $\pi_{4,2}$ (right).}
  \label{fig-kappa}
\end{figure}

Our computational experiments suggest that $\mu$ might grow exponentially fast even for permutations of seemingly simple structure. Let 
$\kappa_n\in\cS_{4n}$ be a permutation defined as
\[
\kappa_n=n+1,n+3,\dots,3n-1,1,3n+1,2,3n+2,\dots,n,4n,n+2,n+4,\dots,3n;
\]
see Figure~\ref{fig-kappa}. Note that $\kappa_n$ is a $321$-free permutation 
that can be split into four `quadrants', each consisting of an increasing 
subsequence of length~$n$. 

\begin{conjecture}\label{conj-exp}
The absolute value of $\mu(1,\kappa_n)$ is exponential in $n$.
\end{conjecture}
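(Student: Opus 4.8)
Since $\kappa_n$ avoids $321$, Corollary~\ref{cor-dec} reduces the conjecture to proving that $|\mu(21,\kappa_n)|$ grows exponentially in~$n$. The plan is to feed the interval $[21,\kappa_n]$ into the embedding formula of Corollary~\ref{cor-rek} (equivalently, Proposition~\ref{pro-form}) and to run an analysis parallel to the proof of Theorem~\ref{thm-cross}: first use switch involutions, in the spirit of Lemmas~\ref{lem-omit} and~\ref{lem-ides} and Corollary~\ref{cor_vanish}, to discard the vanishing $\lambda\in[21,\kappa_n)$, and then evaluate the inner sums $S_\lambda=\sum_{\tau\in[\lambda,\kappa_n]}(-1)^{|\tau|}\E(\tau,\kappa_n)$ for the surviving $\lambda$. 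The decisive structural difference from the $\pi_n$ case is that $\ides(\kappa_n)$ grows linearly in~$n$, so Corollary~\ref{cor_descents} no longer expresses $\mu(21,\lambda)$ as a bounded quantity; on the contrary, the interval $[21,\kappa_n]$ contains sub-permutations $\lambda$ whose own M\"obius value $|\mu(21,\lambda)|$ is already large, and it is this self-reference that should force exponential growth.

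Concretely, I would isolate a \emph{self-similar} subfamily inside $[21,\kappa_n]$: the permutations obtained from $\kappa_n$ by independently truncating each of its four increasing quadrants to prescribed lengths. This family contains the smaller $\kappa_a$ for $a<n$ and interpolates between $21$ and $\kappa_n$ (a one-parameter slice of it is the permutation $\pi_{4,2}$ of Figure~\ref{fig-kappa}); because $\kappa_n$ is built by repeating a fixed local pattern, one may hope that its interval poset is governed by a finite automaton and that the M\"obius values of this parametrized family satisfy a fixed-rank linear recurrence, i.e.\ a transfer-matrix recursion. Using Corollary~\ref{cor-rek} together with the vanishing-$\lambda$ pruning, the goal is to show that in this recursion the $\kappa$-coordinate receives a contribution that sums the M\"obius values of genuinely smaller members of the family with coefficients bounded away from~$0$; a recursion of the rough shape $a_n=\sum_{n'<n}c_{n,n'}a_{n'}+p(n)$ with $c_{n,n'}$ bounded and bounded below has an exponentially growing solution. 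Checking the recurrence for small~$n$ should already exhibit a transfer operator with spectral radius strictly above~$1$ and a strictly positive (or at least sign-coherent) eigenvector, which one would then promote to a rigorous lower bound by induction.

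The main obstacle is \emph{sign cancellation}. In the proof of Theorem~\ref{thm-cross} the odd and even embeddings contributing to each $S_\lambda$ could be matched up almost completely, and the few uncancelled embeddings summed to a clean polynomial; here a complete matching is impossible, since it would contradict the conjectured growth, so one needs matchings whose deficiency is exponentially large yet explicitly countable, and one must simultaneously control the sign of $\mu(21,\lambda)$, which is no longer simply $(-1)^{|\lambda|}$ times a nonnegative integer. I expect the right device to be an involution on the chains of $[1,\kappa_n]$ (via Philip Hall's Theorem, Fact~\ref{fac-hall}) or on the embeddings into $\kappa_n$ that is deliberately left undefined on a distinguished self-similar sub-poset, so that all surviving contributions carry one sign and are enumerated by the same recursion; a viable alternative is to prove a two-sided estimate $c_1r^n\le|\mu(1,\kappa_n)|\le c_2r^n$ robust enough to be stable under the recursion. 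Finally, even a super-polynomial lower bound, weaker than exponential but already beyond Theorem~\ref{thm-cross}, ought to follow from a cruder version of the same scheme in which one keeps only one exponentially growing chain of sub-permutations and bounds the contributions of the remaining $\lambda$ by brute force.
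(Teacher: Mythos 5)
This statement is Conjecture~\ref{conj-exp}: the paper does not prove it, offering only computational evidence, so there is no proof of record to compare you against --- and your proposal, as written, does not close the gap either. It is a research program in which every decisive step is hypothesized rather than established. First, the pruning machinery you want to reuse does not transfer: Lemma~\ref{lem-ides}, Corollary~\ref{cor_vanish} and Observation~\ref{obs-mu} exploit the fact that $\ides(\pi_n)=2$, so that all non-vanishing $\lambda$ have $\ides(\lambda)=1$ and Corollary~\ref{cor_descents} makes $\mu(21,\lambda)$ a signed count of normal embeddings. For $\kappa_n$ the number of inverse descents grows linearly, the analogue of Lemma~\ref{lem-ides} kills almost nothing, and (as the conjectured exponential growth itself forces) there must be exponentially many non-vanishing $\lambda$ whose own $\mu(21,\lambda)$ is large and of uncontrolled sign; so the entire bookkeeping of Theorem~\ref{thm-cross} has to be replaced, not adapted, and nothing in your sketch supplies the replacement.

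Second, the quantitative core is missing. The existence of a fixed-rank transfer-matrix recursion for the four-parameter truncation family is only a hope; Corollary~\ref{cor-rek} by itself gives a recursion over the whole interval $[21,\kappa_n)$, and you give no argument that it closes up on your self-similar subfamily. Your lower-bound mechanism --- ``a recursion $a_n=\sum_{n'<n}c_{n,n'}a_{n'}+p(n)$ with coefficients bounded below grows exponentially'' --- is valid only if the $a_{n'}$ and $c_{n,n'}$ are sign-coherent, which is exactly the cancellation problem you concede is unsolved; with mixed signs such a recursion can stay bounded or oscillate. Likewise, the involution ``deliberately left undefined on a distinguished sub-poset'' and the spectral-radius-above-one transfer operator are named as devices but never constructed, and checking small $n$ numerically cannot be ``promoted by induction'' unless the recursion itself is proved with explicit coefficients. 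In short: the conjecture remains open, and your outline identifies plausible obstacles (sign cancellation, self-similarity) without overcoming any of them; note also that Marchant's preprint cited in the paper gives exponential growth for a different family and does not settle the case of $\kappa_n$.
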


The permutation $\pi_n$ is a subpermutation of $\kappa_n$ for which we were able to compute $\mu(1,\pi_n)$ precisely due to a relatively simple structure of the interval $[1,\pi_n]$. For $k\le n$, let $\pi_{n,k}$ be the subpermutation of $\kappa_n$ induced by the values $n+1,n+3,\dots,n+2k-1,1,3n+1,2,3n+2,\dots,n,4n,n+2,n+4,\dots,n+2k$ in~$\kappa_n$; see Figure~\ref{fig-kappa}. In particular, $\pi_{n,1}=\pi_n$.
Our intuition and some preliminary results support the following conjecture.

\begin{conjecture}\label{conj_poly}
For every fixed $k\ge 1$, the absolute value of $\mu(1,\pi_{n,k})$ grows as $\Theta(n^{k^2+6k})$.
\end{conjecture}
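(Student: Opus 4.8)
The plan is to push the entire argument of Section~\ref{sec-big} from $\pi_{n,1}=\pi_n$ to the general permutation $\pi_{n,k}$. The preliminary observations all survive: a short check shows that $\pi_{n,k}$ is a union of four increasing runs (the $k$ ``left'' elements, $n$ ``bottom'' elements, $n$ ``top'' elements, $k$ ``right'' elements), that it has no adjacency of length more than $1$, that it avoids $321$, and that $\ides(\pi_{n,k})=k+1$. Hence Corollary~\ref{cor-dec} gives $\mu(1,\pi_{n,k})=-\mu(21,\pi_{n,k})$, and Corollary~\ref{cor-rek} expands $\mu(21,\pi_{n,k})$ as $\E(21,\pi_{n,k})-\sum_{\lambda\in[21,\pi_{n,k})}\mu(21,\lambda)S_\lambda$ with $S_\lambda=\sum_{\tau\in[\lambda,\pi_{n,k}]}(-1)^{|\tau|}\E(\tau,\pi_{n,k})$, exactly as in~\eqref{eq-mf}. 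The goal is, as before, to prove that all but polynomially many $\lambda$ are \emph{vanishing}, to describe the surviving ``proper'' $\lambda$ by finitely many discrete shape parameters (their number depending on $k$) together with $O(k)$ length parameters ranging over $[n]$, and then to evaluate each $S_\lambda$ by switch involutions which collapse every nonzero contribution to $\pm 1$.

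The first real difference is that a non-vanishing $\lambda$ now only needs $\ides(\lambda)\le k$: if $\ides(\lambda)=k+1$, then every $\tau\in[\lambda,\pi_{n,k}]$ also has $k+1$ inverse descents, so $[\lambda,\pi_{n,k}]$ is an interval in Smith's subword order over a $(k+2)$-letter alphabet, all embeddings into $\pi_{n,k}$ are normal, and by Corollary~\ref{cor_descents} and Fact~\ref{fac-sum} we get $S_\lambda=(-1)^{|\pi_{n,k}|}\sum_{\tau\in[\lambda,\pi_{n,k}]}\mu(\tau,\pi_{n,k})=0$; this is the analogue of Lemma~\ref{lem-ides}. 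One therefore has to run a separate analysis for each value $\ides(\lambda)=j\in\{1,\dots,k\}$, re-proving for each $j$ the analogues of Corollary~\ref{cor_vanish}, Lemma~\ref{lem-cup}, Observation~\ref{obs-mu}, Lemma~\ref{lem-sf} and Lemma~\ref{lem-singular}: ``omitting a position'' still forces vanishing via a single $i$-switch; extremal and cup/cap-type configurations die by ad hoc switch pairs; $\mu(21,\lambda)=0$ whenever the $(j+1)$-letter encoding of $\lambda$ carries too many repetitions, since $|\mu(21,\lambda)|=\NE(21,\lambda)$ by Corollary~\ref{cor_descents}; and for a surviving proper pair $(\lambda,f)$ the greedy/almost-greedy description of the rightmost embedding together with the $f$-singular analysis reduces $S_f$ to a count of a very rigid family of embeddings $g$. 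Summing over all proper pairs should turn $\mu(21,\pi_{n,k})$ into a signed sum of products of binomial coefficients $\binom{n+O(k)}{d}$; for the bound to come out as conjectured, the largest $d$ occurring must equal $k^2+6k$ (for $k=1$ this is exactly the degree-$7$ contribution of Theorem~\ref{thm-cross}), and then one has to verify that the coefficient of $n^{k^2+6k}$ in the resulting polynomial does not vanish, which gives the matching $\Omega(n^{k^2+6k})$ lower bound.

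The main obstacle is the combinatorial explosion in these middle steps. For $k=1$ the only failure of $[\sigma,\pi_n]$ to be a subword interval came from two ``ambiguous'' positions (a left element of $\pi_n$ acting as a top element of $\sigma$, or a right element acting as a bottom element); for $\pi_{n,k}$ there are $2k$ such positions, so $\lambda$ can carry up to $\Theta(k)$ independent repetitions of several types, its admissible embeddings can have $\Theta(k)$ gaps whose mutual order and lengths all interact, and the case split behind the analogues of Lemma~\ref{lem-sf} and Lemma~\ref{lem-singular} branches accordingly. Doing this \emph{uniformly} in $k$ --- rather than re-deriving a separate formula for each fixed small $k$ --- and in particular extracting the exponent $k^2+6k$ and establishing that its leading coefficient survives, appears to require a new idea; a cleaner, more conceptual encoding of the surviving $f$-singular embeddings (for instance via a transfer matrix or a generating function over the proper pairs) is probably the right route before the general statement becomes tractable. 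Even a clean derivation that $|\mu(1,\pi_{n,2})|$ is a polynomial in $n$ of degree exactly $16$ would already be a meaningful step and would likely expose the shape of the general argument, which in turn should also shed light on the much stronger Conjecture~\ref{conj-exp}.
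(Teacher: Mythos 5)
The statement you are addressing is Conjecture~\ref{conj_poly}, which the paper itself leaves unproved (it is offered only with ``intuition and some preliminary results''), so there is no proof of record to match; the question is whether your text constitutes a proof, and it does not. What you give is a plan for generalizing Section~\ref{sec-big}, and you candidly concede the decisive points: you do not derive the exponent $k^2+6k$ from the structure of the surviving proper pairs, you do not prove the analogues of Corollary~\ref{cor_vanish}, Lemma~\ref{lem-cup}, Observation~\ref{obs-mu}, Lemma~\ref{lem-sf} or Lemma~\ref{lem-singular} for $\pi_{n,k}$, and you do not show that the leading coefficient of the resulting signed sum of binomials survives cancellation --- which is exactly where signed switch arguments can and do conspire (already in Theorem~\ref{thm-cross} the Case D and Case E contributions partially combine). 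Without these, neither the $O(n^{k^2+6k})$ nor the $\Omega(n^{k^2+6k})$ half of the claim is established.

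Beyond the admitted incompleteness, there is a concrete misstep that shows the $k\ge 2$ case is not a routine rerun of $k=1$. You write that $\mu(21,\lambda)=0$ for over-repetitive $\lambda$ ``since $|\mu(21,\lambda)|=\NE(21,\lambda)$ by Corollary~\ref{cor_descents}''; but that corollary requires $\ides(\sigma)=\ides(\pi)$, hence it evaluates $\mu(21,\lambda)$ only when $\ides(\lambda)=1$. Your own (correct) analogue of Lemma~\ref{lem-ides} only kills $\lambda$ with $\ides(\lambda)=k+1$, so for $k\ge 2$ the non-vanishing $\lambda$ may have $\ides(\lambda)=j$ for any $2\le j\le k$, and for those the factor $\mu(21,\lambda)$ in~\eqref{eq-mf} is not given by normal embeddings at all --- it is itself an instance of the hard ``ides-increasing interval'' problem that the whole paper is about. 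The $k=1$ argument silently relied on the fact that every non-vanishing $\lambda$ had exactly one inverse descent; for general $k$ the recursion nests, and handling it (e.g.\ by induction on $j$, or by the transfer-matrix/generating-function reformulation you suggest) is precisely the missing idea. As it stands, your text is a reasonable research outline consistent with the paper's framework, but it is not a proof of Conjecture~\ref{conj_poly}.
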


From Philip Hall's Theorem (Fact~\ref{fac-hall}), we see that $|\mu(1,\pi)|$ is 
bounded from above by the number of chains from $1$ to $\pi$ in the interval $[1,\pi]$, which is further bounded from above by the number of chains from $\emptyset$ to $[n]$ in the poset $(\mathcal{P}([n]),\subseteq)$ of all subsets of $[n]$; see A000670 in OEIS~\cite{sloane}. This gives the rough upper bound
\[
f(n) \le \left(\frac{1}{\log_e 2} \right)^n \cdot n! < (1.443)^n \cdot n!
\]
for large $n$.

This bound can be further improved using 
Ziegler's result~\cite[Lemma 4.6]{ZieglerMax}, which states that the M\"obius function of an interval $[x,y]$ in any locally finite poset is bounded from above by the number of maximal chains from $x$ to~$y$. Again, by counting maximal chains in $(\mathcal{P}([n]),\subseteq)$, we get the upper bound 
\[
f(n)\le n!.
\]
This is still far even from the exponential lower bound proposed in Conjecture~\ref{conj-exp}.

\begin{problem} 
Is $f(n)\le 2^{O(n)}$?
\end{problem}

We note without giving further details that the number of maximal chains from $1$ to $\pi$ can grow as $2^{\Omega(n \log n)}$ for some permutations $\pi$ of size $n$, including our permutation $\pi_n$. 

\paragraph*{Hereditary classes}
Suppose that $\pi$ is restricted to a given proper down-set $\mathcal{C}$ of $(\cS, \le)$, that is, to a 
hereditary permutation class. Determining whether the values of $\mu(1,\pi)$ are bounded by a constant for $\pi \in \mathcal{C}$ is also an interesting problem.
Burstein et al.~\cite{BJJS} show that $\mu(1,\pi)$ 
is bounded on the class of the so-called separable permutations, while 
Smith~\cite{Smith_one,Smith_descents} shows that it is unbounded on permutations 
with at most one descent. These results suggest that the growth of $\mu(1,\pi)$ might
depend on the so-called simple permutations in the class, where a permutation is \emph{simple} if 
it maps no nontrivial interval of consecutive positions to an interval of consecutive 
values, such as the interval 645 in 71645283.

\begin{problem}\label{prob}
On which hereditary permutation classes is $\mu(1,\pi)$ bounded? Is it bounded 
on every class with finitely many simple permutations? Is it unbounded on every 
class with infinitely many simple permutations?
\end{problem}

Very recently, as this paper was undergoing review, Marchant has published a 
preprint~\cite{Marchant} with a construction of a sequence of permutation whose principal M\"obius 
function is exponential in their size. This result, if confirmed, would greatly improve upon our 
polynomial lower bound on $f(n)$, and also answer in the negative the first question in 
Problem~\ref{prob}, since Marchant's construction is based on permutations from a class with only 
finitely many simple permutations.

\section*{Acknowledgements} We wish to thank Vojta Kalu\v za for useful 
discussions.
 
\footnotesize
\bibliographystyle{vlastni}
\bibliography{perms}

\end{document}